\documentclass[ps]{imsart}
\RequirePackage[colorlinks,citecolor=blue,urlcolor=blue]{hyperref}
\RequirePackage[OT1]{fontenc}
\usepackage{amssymb,amsmath,amsthm}
\usepackage[english]{babel}
\usepackage[round]{natbib}
\usepackage{booktabs}
\usepackage{caption}
\usepackage{enumitem}
\usepackage{graphicx}
\usepackage[utf8]{inputenc}
\usepackage{makecell}
\usepackage{mathtools}
\usepackage{pdflscape}
\usepackage{tikz}
\usepackage{url}

\usepackage{cleveref}

\usepackage{stevemath}

\usetikzlibrary{shapes,arrows,positioning,arrows.meta}

\doi{10.1214/154957804100000000}
\pubyear{0000}
\volume{0}
\firstpage{0}
\lastpage{0}
\arxiv{1808.03204}

\tikzstyle{block} = [rectangle, draw,
    text width=16em, text centered, rounded corners, minimum height=4em]
\tikzstyle{block2} = [rectangle, draw,
    text width=8em, text centered, rounded corners, minimum height=2.5em]
\tikzstyle{block3} = [rectangle,
    text width=8em, text centered, minimum height=4em]
\tikzstyle{line} = [draw, -implies, double distance=3pt]
\tikzstyle{dashedline} = [draw, dashed]
\tikzstyle{doubleimplies} = [draw, implies-implies, double distance=3pt]
\tikzstyle{arrow} = [draw, -{Latex[length=3mm]}]
\tikzstyle{doublearrow} = [draw, {Latex[length=3mm]}-{Latex[length=3mm]}]
\tikzstyle{noblock} = [rectangle, text width=5em, text centered,
    minimum height=4em]
\tikzstyle{widenoblock} = [rectangle, text width=9em, text centered,
    minimum height=2em]
\tikzstyle{wideblock} = [rectangle, draw,
    text width=14em, text centered, rounded corners, minimum height=4em]


\newcommand{\psistarprime}{{\psi^\star}'}
\newcommand{\slope}{\mathfrak{s}}
\newcommand{\decay}{D}
\newcommand{\gammamax}{\gamma_{\max}}

\newcommand{\subpsiclass}{\mathbb{S}^{l_0}_{\psi}}

\startlocaldefs
\numberwithin{equation}{section}

\theoremstyle{definition}

\theoremstyle{plain}

\crefname{thm}{Theorem}{Theorems}
\crefname{lem}{Lemma}{Lemmas}
\crefname{prop}{Proposition}{Propositions}
\crefname{cor}{Corollary}{Corollaries}
\crefname{assn}{Assumption}{Assumptions}
\crefname{defn}{Definition}{Definitions}
\crefname{rmk}{Remark}{Remarks}
\crefname{exm}{Example}{Examples}
\crefname{fct}{Fact}{Facts}
\endlocaldefs

\begin{document}

\begin{frontmatter}
\title{Time-uniform Chernoff bounds via nonnegative supermartingales}
\runtitle{Time-uniform Chernoff bounds}

\begin{aug}
\author{\fnms{Steven R.} \snm{Howard}\thanksref{onr1}}
\address{Department of Statistics \\
University of California, Berkeley \\
Berkeley, California 94720, USA \\
stevehoward@berkeley.edu
}
\author{\fnms{Aaditya} \snm{Ramdas}}
\address{Departments of Statistics and Machine Learning\\
Carnegie Mellon University \\
Pittsburgh, PA 15213, USA \\
aramdas@stat.cmu.edu
}
\author{\fnms{Jon} \snm{McAuliffe}}
\address{Department of Statistics \\
University of California, Berkeley \\
Berkeley, California 94720, USA \\
jonmcauliffe@berkeley.edu
}
\author{\fnms{Jasjeet} \snm{Sekhon}\thanksref{onr2}}
\address{Departments of Statistics and Political Science \\
University of California, Berkeley \\
Berkeley, California 94720, USA \\
sekhon@berkeley.edu
}
\end{aug}

\runauthor{S. R. Howard et al.}
\thankstext{onr1}{Supported by Office of Naval Research grant N00014-17-1-2176.}
\thankstext{onr2}{Supported by Office of Naval Research grants N00014-15-1-2367, N00014-17-1-2176.}

\begin{abstract}
  We develop a class of exponential bounds for the probability that a
  martingale sequence crosses a time-dependent linear threshold. Our key insight
  is that it is both natural and fruitful to formulate exponential concentration
  inequalities in this way. We illustrate this point by presenting a single
  assumption and theorem that together unify and strengthen many tail
  bounds for martingales, including classical inequalities (1960-80) by
  Bernstein, Bennett, Hoeffding, and Freedman; contemporary inequalities
  (1980-2000) by Shorack and Wellner, Pinelis, Blackwell, van de Geer, and de la
  Peña; and several modern inequalities (post-2000) by Khan, Tropp, Bercu and
  Touati, Delyon, and others. In each of these cases, we give the strongest and
  most general statements to date, quantifying the time-uniform concentration of
  scalar, matrix, and Banach-space-valued martingales, under a variety of
  nonparametric assumptions in discrete and continuous time. In doing so, we
  bridge the gap between existing line-crossing inequalities, the sequential
  probability ratio test, the Cram\'er-Chernoff method, self-normalized
  processes, and other parts of the literature.
\end{abstract}

\begin{keyword}[class=MSC]
\kwd[Primary ]{60E15}
\kwd{60G17}
\kwd[; secondary ]{60F10}
\kwd{60B20}
\end{keyword}

\begin{keyword}
\kwd{exponential concentration inequalities}
\kwd{nonnegative supermartingale}
\kwd{line crossing probability}
\end{keyword}

\tableofcontents
\end{frontmatter}

\section{Introduction}

Concentration inequalities play an important role in probability and statistics,
giving non-asymptotic tail probability bounds for random variables or suprema of
random processes. In this paper, we consider a method to bound the probability
that a martingale ever crosses a time-dependent linear threshold. We were
motivated by the fact that such bounds are the key ingredient in many sequential
inference procedures. We argue, however, that this formulation is materially
better for the development of exponential concentration inequalities, even in
some non-sequential settings. We give a master assumption and theorem which
handle all of these cases, in discrete and continuous time, for scalar-valued,
matrix-valued, and smooth Banach-space-valued martingales.  By unifying and
organizing dozens of results, we illustrate how these results relate to one
another and highlight the specific ingredients contributed by each author. Our
improvements to existing results come in the form of weakened assumptions,
extension of fixed-time or finite-horizon bounds to infinite-horizon uniform
bounds, and improved exponents.

Our main results are presented in full generality in the following section. To
motivate these results, we first contrast a small handful of well-known,
concrete results from the exponential concentration literature; see
\cref{sec:historical} for a more detailed overview of the literature we draw
upon. Throughout the paper, most of our results are presented for filtered
probability spaces, and we use $\E_t$ to denote expectation conditional on the
underlying filtration $\Fcal_t$ at time $t$. For any discrete-time process
$(Y_t)_{t \in \N}$, we write $\Delta Y_t \defineas Y_t - Y_{t-1}$ for the
increments. Finally, we write $\Hcal^d$ for the space of $d \times d$ Hermitian
matrices. The relation $A \preceq B$ denotes the semidefinite order on
$\Hcal^d$, while $\gamma_{\max}: \Hcal^d \to \R$ denotes the maximum eigenvalue
map.

\begin{example}\label{th:intro_example}
  Unless indicated otherwise, let $(S_t)_{t=0}^\infty$ be a real-valued
  martingale with respect to a filtration $(\Fcal_t)_{t=0}^\infty$, with
  $S_0 = 0$.
  \begin{enumerate}
  \item[(a)] Three of the earliest and most well-known results for exponential
    concentration are attributed to Bernstein, Bennett, and Hoeffding. Assume
    the increments $(\Delta S_t)$ are independent, and let
    $v_t \defineas \sum_{i=1}^t \E (\Delta S_i)^2$. We present Bernstein's
    inequality \citep{bernstein_theory_1927} in a widely used form (e.g.,
    \citealp{boucheron_concentration_2013}, Corollary 2.11): if, for some fixed
    $m \in \N$ and $c > 0$, the increments satisfy the moment condition
    $\sum_{i=1}^m \E (\Delta S_t)^k \leq \frac{k!}{2} c^{k-2} v_m$ for all
    integers $k \geq 3$, then for any $x > 0$, we have
    \begin{align}
      \P\eparen{S_m \geq x} \leq \expebrace{-\frac{x^2}{2(v_m + cx)}}.
      \label{eq:bernstein_intro}
    \end{align}
    Bernstein's moment condition is easily seen to be satisfied if the
    increments are bounded. \Citet[eq.\ 8b]{bennett_probability_1962} improved
    Bernstein's result for bounded increments: if $\Delta S_t \leq 1$ for all
    $t$, then for any $x > 0$ and $m \in \N$, we have
    \begin{align}
      \P\eparen{S_m \geq x} \leq \pfrac{v_m}{x + v_m}^{x + v_m} e^x.
      \label{eq:bennett_intro}
    \end{align}
    Finally, \citet[eq.\ 2.3]{hoeffding_probability_1963} gave a simplified
    result for increments bounded from above and below: if
    $\abs{\Delta S_t} \leq 1$ for all $t$, then for any $x > 0$ and
    $m \in \N$, we have
    \begin{align}
      \P\eparen{S_m \geq x} \leq e^{-x^2 / 2m}.
      \label{eq:hoeffding_intro}
    \end{align}
  \item[(b)] \citet[Theorem 1]{blackwell_large_1997}: if
    $\abs{\Delta S_t} \leq 1$ for all $t$, then for any $a, b > 0$, we have
    \begin{align}
      \P(\exists t \in \N: S_t \geq a + bt) \leq e^{-2ab}.
      \label{eq:blackwell_intro}
    \end{align}
    Relative to Hoeffding's inequality, Blackwell removes the assumption of
    independent increments, although this possibility was noted by Hoeffding
    himself \citep[p. 18]{hoeffding_probability_1963}. More importantly,
    Blackwell replaces the event $\brace{S_m \geq x}$ for fixed time $m$ with
    the time-uniform event $\brace{\exists t \in \N: S_t \geq a + bt}$. To see
    that Blackwell's result recovers and strengthens that of Hoeffding, set
    $a=x/2$, $b=x/2m$ and note that Blackwell's uniform bound recovers
    Hoeffding's bound at time $t=m$, so that Blackwell obtains the same
    probability bound for a larger event.
  \item[(c)] \citet[Theorem 1.6]{freedman_tail_1975}: if
    $\abs{\Delta S_t} \leq 1$ for all $t$, then writing
    $V_t \defineas \sum_{i=1}^t \Var\condparen{\Delta S_i}{\Fcal_{i-1}}$, then
    for any $x, m > 0$, we have
    \begin{align}
      \P\eparen{\exists t \in \N: V_t \leq m \text{ and } S_t \geq x}
        \leq \pfrac{m}{x+m}^{x+m} e^x.
      \label{eq:freedman_intro}
    \end{align}
    Similar to Bernstein's and Bennett's inequalities, but unlike those of
    Hoeffding and Blackwell, Freedman's inequality measures time in terms of a
    predictable quantity, the accumulated conditional variance $V_t$, rather
    than simply the number of observations $t$. Freedman's inequality bounds the
    deviations of $(S_t)$ uniformly over time, but only up to the finite time
    horizon implied by $V_t \leq m$.
  \item[(d)] \citet[Theorem 6.2, eq.\ 6.4]{de_la_pena_general_1999}: if the
    increments are conditionally symmetric, that is,
    $\Delta S_t \sim -\Delta S_t \mid \Fcal_{t-1}$ for all $t$, then letting
    $V_t = \sum_{i=1}^t \Delta S_i^2$, for any $\alpha \geq 0$ and
    $\beta, x, m > 0$ we have
    \begin{align}
      \P\eparen{\exists t \in \N: V_t \geq m
        \text{ and } \frac{S_t}{\alpha + \beta V_t} \geq x}
      \leq \expebrace{-x^2\eparen{\frac{\beta^2}{2m} + \alpha \beta}}.
      \label{eq:pena_intro}
    \end{align}
    A remarkable feature of this result is that we measure time via the adapted
    quantity $V_t$. Unlike Freedman's inequality, which uses the true
    conditional variance to measure time, de la Pe\~na's inequality relies only
    on empirical quantities. In further contrast to Freedman's inequality, de la
    Pe\~na's bound holds uniformly over $V_t \geq m$ rather than $V_t \leq m$,
    and we bound the deviations of the self-normalized process
    $S_t / (\alpha + \beta V_t)$.
  \item[(e)] \citet[Theorem 6.2]{tropp_user-friendly_2012}: departing from the
    above results for real-valued martingales, here we begin with a martingale
    $(Y_t)_{t \in \N}$ taking values in $\Hcal^d$. Assume that the increments
    $\Delta Y_t$ are independent and, for some fixed $c > 0$ and
    $\Hcal^d$-valued sequence $(W_t)_{t \in \N}$, the moments of the increments
    satisfy
    $\E\condparen{\Delta S_t^k}{\Fcal_{t-1}} \preceq \frac{k!}{2} c^{k-2} \Delta
    W_t$ for all $t$ and all $k \geq 2$. Then, writing
    $S_t = \gamma_{\max}(Y_t)$ and $V_t = \gamma_{\max}(W_t)$, for any
    $x > 0$ and $t \geq 1$, we have
    \begin{align}
      \P\eparen{S_t \geq x} \leq d \cdot \expebrace{-\frac{x^2}{2(V_t + cx)}}.
      \label{eq:tropp_intro}
    \end{align}
    This elegant result extends Bernstein's inequality to the matrix
    setting. Note the prefactor of $d$ that appears when we bound the deviations
    of the maximum eigenvalue of a $d \times d$ matrix-valued process.
  \item[(f)] Finally, we recall a textbook result for Brownian motion (e.g.,
    \citealp{durrett_probability:_2017}, Exercise 7.5.2): if
    $(S_t)_{t \in (0, \infty)}$ is a standard Brownian motion, then for any
    $a, b > 0$, we have
    \begin{align}
      \P(\exists t \in (0, \infty): S_t \geq a + bt) = e^{-2ab}.
    \end{align}
    The result closely resembles Blackwell's inequality for discrete-time
    martingales with bounded increments, but here we have an equality.
  \end{enumerate}
\end{example}

Clearly, these results have much in common with each other and with myriad other
results from the exponential concentration literature. Examining the proofs, we
find many shared ingredients which are now well known: the notions of
sub-Gaussian and sub-exponential random variables, the Cram\'er-Chernoff method,
the large-deviations supermartingale, and so on. Nonetheless, there are enough
differences among the results and their proofs to leave us wondering whether
these results are merely similar in appearance, or whether they are all special
cases of some underlying, general argument.

In this paper, we provide a framework that formally unifies the above results
along with many others. Our framework consists of two pieces. First, we
crystallize the notion of a \emph{sub-$\psi$ process}
(\cref{th:canonical_assumption}), a sufficient condition general enough to
encompass a broad set of results not previously treated together, yet specific
enough to derive a useful set of equivalent concentration inequalities. This
definition provides a convenient categorization of exponential concentration
results into sub-Bernoulli, sub-Gaussian, sub-Poisson, sub-exponential, and
sub-gamma bounds. Second, we give a generalization of the Cram\'er-Chernoff
argument, \cref{th:uniform_chernoff}. This result yields strengthened versions
of many existing inequalities and illustrates equivalences among different forms
of exponential bounds. For example, \cref{th:uniform_chernoff} strengthens both
``Freedman-style'' inequalities such as \eqref{eq:freedman_intro} and ``de la
Pe\~na-style'' inequalities such as \eqref{eq:pena_intro} to hold uniformly over
all time, and in these strengthened forms, the two styles of inequalities are
shown to be equivalent, as depicted in \cref{fig:equivalence}. We remark that
the seminal works from which these examples are drawn, like others referenced
below, include many other important contributions, and our claims about
\cref{th:uniform_chernoff} refer only to the particular inequalities cited from
each work.

\begin{figure}[h!]
  \centering
  \begin{tikzpicture}[node distance = 7.5cm, auto]
    \node[wideblock] (thm1b)
      {This paper's \Cref{th:uniform_chernoff}(b)};
    \node[wideblock, right of=thm1b] (thm1c)
      {This paper's \Cref{th:uniform_chernoff}(c,d)};
    \node[wideblock, below of=thm1b, node distance=2.5cm] (freedman)
      {Freedman-style inequalities,\\ such as \eqref{eq:freedman_intro}};
    \node[wideblock, below of=thm1c, node distance=2.5cm] (dlP)
      {de la Pe\~na-style inequalities,\\ such as \eqref{eq:pena_intro}};
    \path [line] (thm1b) -- node[noblock, left] {implies}
      (freedman);
    \path [line] (thm1c) -- node[noblock, right] {implies} (dlP);
    \path [doubleimplies] (thm1b) --
      node[widenoblock, above] {imply each other} (thm1c);
    \path [dashedline] (freedman) --
      node[widenoblock, above] {do not imply each other} (dlP);
  \end{tikzpicture}
  \caption{This paper's \cref{th:uniform_chernoff} implies both Freedman-style
    inequalities such as \eqref{eq:freedman_intro} and de la Pe\~na-style
    inequalities such as \eqref{eq:pena_intro}. Refer also to
    \cref{fig:three_bounds,fig:ratio_constant} for visualizations of these
    implications. \label{fig:equivalence}}
\end{figure}
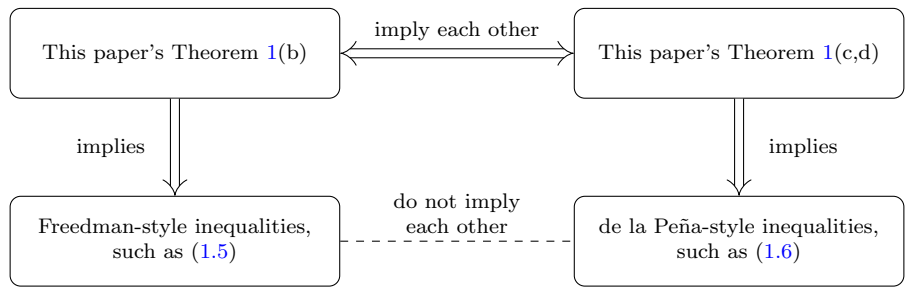

Once the framework is in place, the proof of the main result follows using tools
from classical large-deviation theory \citep{dembo_large_2010}. We construct a
nonnegative supermartingale as in \citet{freedman_tail_1975}, and we obtain a
bound on its entire trajectory using Ville's maximal inequality
\citep{ville_etude_1939}. We invoke Tropp's ideas \citep{tropp_freedmans_2011}
to extend the results to the matrix setting. The equivalences that follow from
optimizing linear bounds are obtained using convex analysis
\citep{rockafellar_convex_1970}.  By drawing together various proof ingredients
from different sources, we elucidate previously unrecognized or understated connections. For
example, we demonstrate how self-normalized matrix inequalities follow easily upon
combining ideas from the literature on self-normalized processes with those from
matrix concentration.

\subsection{Paper organization}

\Cref{sec:main_results} lays out our framework for exponential line-crossing
inequalities. Specifically, we formally state \cref{th:canonical_assumption} and
\cref{th:uniform_chernoff} that together describe a general formulation of the
Cram\'er-Chernoff method.  After stating \Cref{th:uniform_chernoff},
we give a quick overview of existing results which can be recovered in our
framework and the improvements thus obtained. A short proof of our master
theorem comes next, and following some remarks, we provide three 
illustrative examples.

\Cref{sec:sufficient_conditions,sec:special_cases} are devoted to a catalog of
important results from the literature which fit
into our framework, often yielding results which are stronger than those
originally published. In \cref{sec:sufficient_conditions}, we consider the
maximum-eigenvalue process of a matrix-valued martingale and enumerate useful
sufficient conditions for such a process to be sub-$\psi$, collecting and in
some cases generalizing a variety of ingenious results from the
literature. \Cref{sec:special_cases} examines various instantiations of our
master theorem, obtaining corollaries by combining one of the sufficient
conditions from \cref{sec:sufficient_conditions} with one of the four equivalent
conclusions of \cref{th:uniform_chernoff}. These illustrate how our framework
recovers and strengthens existing exponential concentration results. We discuss
sharpness, another geometrical insight, and future work in
\cref{sec:discussion}. Proofs of most results are in \cref{sec:proofs}.

\subsection{Historical context}\label{sec:historical}

To aid the reader, we give here some historical context for the existing results
discussed below. This is not intended to be a comprehensive history of the
literature on exponential concentration, and we focus on the specific results
discussed in \cref{sec:special_cases}, giving pointers to further references as
appropriate.

The Cram\'er-Chernoff method takes its name from the works of
\citet{cramer_sur_1938} and \citet{chernoff_measure_1952}. Both of these authors
were concerned with a precise characterization of the asymptotic decay of tail
probabilities beyond the regime in which the central limit theorem applies;
Cram\'er provided the first proof of such a ``large deviation principle'', while
Chernoff gave a more general formulation and placed more emphasis on the
non-asymptotic upper bound which is our focus. These results spawned a vast
literature on large deviation principles, with the goal of giving sharp upper
and lower bounds on the limiting exponential decay of certain probabilities
under a sequence of measures; see \citet{dembo_large_2010} for an excellent
presentation of this literature. Our focus, on non-asymptotic upper bounds for
nonparametric classes of distributions, is rather different, though such upper
bounds often make an appearance in proofs of large deviation principles.

Bernstein was perhaps the earliest proponent of the sort of exponential tail
bounds that are the focus of this paper, having proposed his famous inequality
in 1911, according to \citet{prokhorov_bernstein_1995}; see also
\citet{craig_tchebychef_1933}, \citet[ch.\ 10, ex.\ 12-14, pp.\
204-205]{uspensky_introduction_1937} and \citet{bernstein_theory_1927}, though
the last source appears rather inaccessible. The modern theory of exponential
concentration began to take shape in the 1960's, as (using the terminology of
this paper, from \cref{sec:sufficient_conditions})
\citet{bennett_probability_1962} improved Bernstein's sub-gamma inequality to
sub-Bernoulli and sub-Poisson ones for random variables bounded from
above. \citet{hoeffding_probability_1963} gave alternative sub-Bernoulli and
sub-Gaussian bounds for random variables bounded from both above and below. For
further references on this line of work, see
\citet{boucheron_concentration_2013}, whose treatment of the Cram\'er-Chernoff
method has been invaluable in formulating our own framework, as well as
\citet{mcdiarmid_concentration_1998}.

\citet[p.\ 936]{godwin_generalizations_1955} reports that Bernstein generalized
his inequality to dependent random variables. \citet[pp.\
17-18]{hoeffding_probability_1963} considered the generalization of his
sub-Bernoulli and sub-Gaussian bounds to martingales and the possibility of
finite-horizon uniform inequalities based on Doob's maximal inequality; the
martingale generalization was later explored by
\citet{azuma_weighted_1967}. \citet{freedman_tail_1975} extended Bennett's
sub-Poisson bound to martingales, giving a uniform bound subject to a maximum
value of the predictable quadratic variation of the martingale. This
``Freedman-style'' bound has been generalized to other settings in many
subsequent works \citep{de_la_pena_general_1999, khan_$l_p$-version_2009,
  tropp_freedmans_2011,
  fan_exponential_2015}. \Citet{chen_statistical_2012,chen_new_2012} has
considered the extension of Chernoff-style bounds to hold uniformly over time
for scalar-valued martingales in a manner similar to our line-crossing
inequalities, including a condition similar to our sub-$\psi$ definition; our
formulation further encompasses matrix-valued processes and self-normalized
inequalities.

The extension of these methods to matrix-valued processes, via control of the
matrix moment-generating function, originated with
\citet{ahlswede_strong_2002}. The method was refined by
\citet{christofides_expansion_2007}, \citet{oliveira_spectrum_2010,
  oliveira_sums_2010} and then by \citet{tropp_freedmans_2011,
  tropp_user-friendly_2012}, whose influential treatment synthesized and
improved upon past work, generalizing many scalar exponential inequalities to
operator-norm inequalities for matrix martingales. We have incorporated Tropp's
formulation into our framework, and we focus on his theorem statements for our
matrix bound statements. See \citet{tropp_introduction_2015} for a recent
exposition and further references.

There is a long history of investigation of the concentration of Student's
$t$-statistic under non-normal sampling. \citet{efron_students_1969} gives many
references to early work. He also shows, by making use of Hoeffding's
sub-Gaussian bound, that the equivalent self-normalized statistic
$\eparen{\sum_i X_i} / \sqrt{\sum_i X_i^2}$ satisfies a 1-sub-Gaussian tail
bound whenever the $X_i$ satisfy a symmetry condition, a result he attributes to
Bahadur and Eaton \citep[p.\ 1284]{efron_students_1969}. Starting with
\citet{logan_limit_1973}, there has been a great deal of work on limiting
distributions and large deviation principles for self-normalized statistics; see
\citet{shao_self-normalized_1997} and references therein.  In terms of
exponential tail bounds, \citet{de_la_pena_general_1999} explored general
conditions for bounding the deviations of a martingale, introduced new
decoupling techniques (cf. \citealp{de_la_pena_decoupling_1999}), and showed
that any martingale with conditionally symmetric increments satisfies a
self-normalized sub-Gaussian bound with no integrability condition. This work
laid the foundation for the type of self-normalized exponential inequalities
which we explore in this paper. These methods were extended by
\citet{de_la_pena_moment_2000, de_la_pena_self-normalized_2004}, which
introduced a general supermartingale ``canonical assumption'' that is a key
precursor of our sub-$\psi$ condition, and initiated a flurry of subsequent
activity on self-normalized exponential inequalities
(cf. \citealp{de_la_pena_pseudo-maximization_2007, de_la_pena_theory_2009}). We
note in particular inequality (3.9) of \citet{de_la_pena_self-normalized_2001},
which gives an infinite-horizon boundary-crossing inequality based on a mixture
extension of their canonical assumption, as well as the multivariate
inequalities (3.24) (for a $t$-statistic) and (3.29) (for general mixture
boundaries) given by
\citet{de_la_pena_theory_2009}. \Citet{bercu_exponential_2008} gave a
self-normalized sub-Gaussian bound without symmetry by incorporating the
conditional quadratic variation, requiring only finite second moments, and some
ingenious further extensions have been given by \citet{delyon_exponential_2009},
\citet{fan_exponential_2015}, and \citet{bercu_concentration_2015}, many of
which we include in our collection of sufficient conditions for a process to be
sub-$\psi$ (\cref{sec:sub_psi_conds}). See
\citet{de_la_pena_self-normalized_2009} and \citet{bercu_concentration_2015} for
further references.

Ville's maximal inequality for nonnegative supermartingales, the technical
underpinning of \cref{th:uniform_chernoff}, originates with
\citet[p. 101]{ville_etude_1939}. It is commonly attributed to Doob, though Doob
acknowledged Ville's priority extensively in his works, e.g.,
\citet[pp. 458-460]{doob_regularity_1940}. \citet{mazliak_splendors_2009}
contains further historical discussion and sources.

\section{Main results}\label{sec:main_results}

Let $(S_t)_{t \in \Tcal \union \brace{0}}$ be a real-valued process adapted to
an underlying filtration $(\Fcal_t)_{t \in \Tcal \cup \brace{0}}$, where either
$\Tcal = \N$ for discrete-time processes or $\Tcal = (0, \infty)$ for
continuous-time processes. In continuous time, we assume $(\Fcal_t)$ satisfies
the ``usual hypotheses'', namely, that it is right-continuous and complete, and
we assume $(S_t)$ is c\`adl\`ag; see, e.g., \citet{protter_stochastic_2005}.  In
a statistical setting, we may think of $(S_t)$ as a summary statistic
accumulating over time, for example a cumulative sum of observations, whose
deviations from zero we would like to bound under some null hypothesis. In this
setting, a bound on the deviations of $(S_t)$ holding uniformly over time can be
used to construct an appropriate sequential hypothesis test, a special case of
which is Wald's sequential probability ratio test discussed in
\cref{sec:expo_family}. We first explain our key condition on $(S_t)$, the
sub-$\psi$ condition. We then state, prove, and interpret our master theorem,
followed by some more detailed examples of its application.

\subsection{The sub-$\psi$ condition}

Our key condition on $(S_t)$ is stated in terms of two additional objects. The
first object is a real-valued, nondecreasing process
$(V_t)_{t \in \Tcal \union \brace{0}}$, also adapted to $(\Fcal_t)$ (and
c\`adl\`ag in the continuous-time case). It is an ``accumulated variance'' process
which serves as a measure of \emph{intrinsic time}, an appropriate quantity to
control the deviations of $S_t$ from zero \citep{blackwell_amount_1973}. The
second object is a function $\psi: \R_{\geq 0} \to \R$, reminiscent of a
cumulant-generating function, which quantifies the relationship between $S_t$
and $V_t$. The simplest case is when $S_t$ is a cumulative sum of i.i.d.,
real-valued, mean-zero random variables with distribution $F$, in which case we
take $V_t = t$ and let $\psi(\lambda) = \log \int e^{\lambda x} \d F(x)$ be the
CGF of $F$. Our key condition requires that $S_t$ is unlikely to grow too
quickly relative to intrinsic time $V_t$; it generalizes developments from
\citet{freedman_tail_1975, de_la_pena_self-normalized_2004,
  tropp_freedmans_2011, chen_new_2012}, and others.

\begin{definition}[Sub-$\psi$ process]\label{th:canonical_assumption}
  Let $(S_t)_{t \in \Tcal \union \brace{0}}$ and
  $(V_t)_{t \in \Tcal \union \brace{0}}$ be two real-valued processes adapted to
  an underlying filtration $(\Fcal_t)_{t \in \Tcal \union \brace{0}}$ with
  $S_0 = V_0 = 0$ a.s.\ and $V_t \geq 0$ a.s.\ for all $t \in \Tcal$. For a
  function $\psi: [0, \lambda_{\max}) \to \R$ and a scalar
  $l_0 \in [1, \infty)$, we say $(S_t)$ is \emph{$l_0$-sub-$\psi$ with variance
    process $(V_t)$} if, for each $\lambda \in [0, \lambda_{\max})$, there
  exists a supermartingale $(L_t(\lambda))_{t \in \Tcal \union \brace{0}}$ with
  respect to $(\Fcal_t)$ such that $L_0(\lambda) \leq l_0$ a.s.\ and
  \begin{align}
    \expebrace{\lambda S_t - \psi(\lambda) V_t } \leq L_t(\lambda)
    \text{ a.s.\ for all } t \in \Tcal.
  \end{align}
  For given $\psi$ and $l_0$, we write $\subpsiclass$ for the class of pairs of
  $l_0$-sub-$\psi$ processes $(S_t, V_t)$:
  \begin{align}
    \subpsiclass \defineas \ebrace{
      (S_t, V_t):
      (S_t) \text{ is $l_0$-sub-$\psi$ with variance process } (V_t)}.
  \end{align}
\end{definition}

We often say simply that a process is sub-$\psi$, omitting $l_0$ from our
terminology for simplicity. All examples considered in this paper fit into three
cases for the value of $l_0$: $l_0 = 1$, when deriving one-sided bounds on
scalar martingales; $l_0 = 2$, when deriving bounds on the norm of certain
Banach-space-valued martingales; or $l_0 = d$, when deriving bounds on the
maximum-eigenvalue process of a $d \times d$ matrix-valued martingale. Also,
though we often speak of a process $(S_t)$ being
sub-$\psi$, the sub-$\psi$ condition formally applies to the pair $(S_t,V_t)$
and not to the process $(S_t)$ alone, so that meaningful statements are always
made in the context of a specific intrinsic time process $(V_t)$.

\Cref{th:canonical_assumption} may at first defy intuition. We
can motivate it from several angles:
\begin{itemize}
\item Suppose $S_t$ is a scalar-valued martingale whose deviations we wish to
  bound uniformly over time. We might like to apply Ville's maximal inequality
  (see \cref{sec:proof_uniform_chernoff}), but must first transform $S_t$ into a
  \emph{nonnegative} supermartingale. It is natural to consider the exponential
  transform $e^{\lambda S_t}$ for some $\lambda > 0$, which immediately yields a
  submartingale. Our task, then, is to find some appropriate $\psi$ and $(V_t)$
  which ``pull down'' the submartingale so that the process
  $\expebrace{\lambda S_t - \psi(\lambda) V_t}$ is a
  supermartingale. Intuitively, the exponential process
  $\expebrace{\lambda S_t - \psi(\lambda) V_t}$ measures how quickly $S_t$ has
  grown relative to intrinsic time $V_t$, and the free parameter $\lambda$
  determines the relative emphasis placed on the tails of the distribution of
  $S_t$, i.e., on the higher moments. Larger values of $\lambda$ exaggerate
  larger movements in $S_t$, and $\psi$ captures how much we must
  correspondingly exaggerate $V_t$.
\item Consider again the simple case in which $S_t$ is a cumulative sum of
  i.i.d. draws from a distribution $F$ over the reals with mean zero and CGF
  $\psi(\lambda) < \infty$ for $\lambda \in [0, \lambda_{\max})$. Then, setting
  $V_t = t$, we may take $L_t(\lambda)$ equal to the exponential process
  $\expebrace{\lambda S_t - \psi(\lambda) t}$, which is a martingale in this
  case, so that the defining inequality of \cref{th:canonical_assumption} is an
  equality. The exponential process may be interpreted as the likelihood ratio
  in an exponential family containing $F$ with sufficient statistic
  $S_t$. See \cref{ex:coin_flipping} for a more detailed exposition of this
  setting and \cref{sec:expo_family} for more on the connection with exponential
  families.
\item Alternatively, we may begin from the martingale method for concentration
  inequalities (\citealp{hoeffding_probability_1963};
  \citealp{azuma_weighted_1967}; \citealp{mcdiarmid_concentration_1998};
  \citealp{raginsky_concentration_2012}, section 2.2), itself based on the
  classical Cram\'er-Chernoff method (\citealp{cramer_sur_1938};
  \citealp{chernoff_measure_1952}; \citealp{boucheron_concentration_2013},
  section 2.2). The martingale method starts from an assumption such as
  $\E\condparen{e^{\lambda (X_t - \E\condparen{X_t}{\Fcal_{t-1}})}}{\Fcal_{t-1}}
  \leq e^{\psi(\lambda) \sigma_t^2}$ for all $t \geq 1$ and
  $\lambda \in [0, \lambda_{\max})$. When $\psi(\lambda) = \lambda^2 / 2$ and
  $\lambda_{\max} = \infty$ (and the condition holds for $\lambda < 0$ as well),
  this is the definition of a conditionally sub-Gaussian random variable with
  variance parameter $\sigma_t^2$. When
  $\psi(\lambda) = \lambda^2 / (2 (1 - c \lambda))$ and $\lambda_{\max} = 1/c$,
  we have the definition of a random variable which is conditionally sub-gamma
  on the right tail with variance parameter $\sigma_t^2$ and scale parameter $c$
  \citep{boucheron_concentration_2013}. Writing
  $S_t \defineas \sum_{i=1}^t (X_i - \E_{i-1} X_i)$ and
  $V_t \defineas \sum_{i=1}^t \sigma_i^2$, the process
  $\expebrace{\lambda S_t - \psi(\lambda) V_t}$ is then a supermartingale for
  each $\lambda \in \R$. For example, if $\Delta S_t \in [a_t, b_t]$ for all
  $t$, then $(S_t)$ is 1-sub-$\psi$ with $\psi(\lambda) = \lambda^2 / 2$ on
  $\lambda \in [0, \infty)$, and $V_t = \sum_{i=1}^t \pfrac{b-a}{2}^2$; this
  fact underlies \cref{th:intro_example}(a,b). Or, if $S_t \leq 1$ for all $t$,
  then $(S_t)$ is 1-sub-$\psi$ with $\psi(\lambda) = e^\lambda - \lambda - 1$ on
  $\lambda \in [0, \infty)$, a fact which leads to \cref{th:intro_example}(c).
\item Unlike the martingale method assumption, \cref{th:canonical_assumption}
  allows $(V_t)$ to be adapted rather than predictable, which leads to a variety
  of self-normalized inequalities \citep{de_la_pena_general_1999,
    de_la_pena_self-normalized_2004, de_la_pena_self-normalized_2009,
    bercu_concentration_2015, fan_exponential_2015}, for example yielding bounds
  on the deviation of a martingale in terms of its quadratic variation. In this
  context, \cref{th:canonical_assumption} is closely related to the ``canonical
  assumption'' of \citet[eq.\ 1.6]{de_la_pena_self-normalized_2004}, which
  requires that $\expebrace{\lambda S_t - \Phi(\lambda V_t)}$ is a
  supermartingale for certain nonnegative, strictly convex functions $\Phi$. We
  have found it more useful to separate the second term into
  $\psi(\lambda) V_t$, though both formulations yield interesting results. For
  example, if $\Delta S_t \sim -\Delta S_t \mid \Fcal_{t-1}$, then $(S_t)$ is
  1-sub-$\psi$ with $\psi(\lambda) = \lambda^2/2$ over
  $\lambda \in [0, \infty)$, and $V_t = \sum_{i=1}^t \Delta S_t^2$, from which
  we may obtain \cref{th:intro_example}(d).
\item Also in contrast to \cite{de_la_pena_self-normalized_2004}, we allow the
  exponential process to be merely upper bounded by a supermartingale, rather
  than being a supermartingale itself; this permits us to handle bounds on the
  maximum eigenvalue process of a matrix-valued martingale, using techniques from
  \citet{tropp_freedmans_2011}. For example, under the conditions of
  \cref{th:intro_example}(e), the maximum eigenvalue process $(S_t)$ is
  $d$-sub-$\psi$ with $\psi(\lambda) = \lambda^2 / [2(1 - c \lambda)]$ on
  $\lambda \in [0, 1/c)$. In this case, the exponential process
  $\expebrace{\lambda S_t - \psi(\lambda) V_t}$ is not a supermartingale, but is
  upper bounded by the trace-exponential supermartingale
  $\tr \expebrace{\lambda Y_t - \psi(\lambda) W_t}$. The initial value of this
  trace-exponential process is $l_0 = d$, which leads to the pre-factor of $d$
  in the bound \eqref{eq:tropp_intro}.
\end{itemize}

\Cref{sec:sufficient_conditions} collects a variety of sufficient conditions
from the literature for a process to be sub-$\psi$, including all of the
examples given above. These conditions illustrate the broad applicability of
\cref{th:canonical_assumption} in nonparametric settings, i.e., those which
restrict the distribution of $(S_t)$ to some infinite-dimensional class, for
example all processes with bounded increments, or with increments having finite
variance. Even in such nonparametric cases, $\psi$ is still a CGF of some
distribution in all of our examples, though this is not required for the most
basic conclusion of \cref{th:uniform_chernoff}. Indeed, the full force of
\cref{th:uniform_chernoff} comes into effect only when $\psi$ satisfies certain
properties which hold for CGFs of zero-mean, non-constant random variables
\citep[Theorem 2.3]{jorgensen_theory_1997}:

\begin{definition}\label{th:cgf_like}
  A real-valued function $\psi$ with domain $[0, \lambda_{\max})$ is called
  \emph{CGF-like} if it is strictly convex and twice continuously differentiable
  with $\psi(0) = \psi'(0_+) = 0$ and
  $\sup_{\lambda \in [0, \lambda_{\max})} \psi(\lambda) = \infty$. For such a
  function we define
  \begin{align}
    \bar{b} = \bar{b}(\psi) \defineas
    \sup_{\lambda \in [0, \lambda_{\max})} \psi'(\lambda) \in (0, \infty].
  \end{align}
\end{definition}

In many typical cases we have $\lambda_{\max} = \infty$ and
$\bar{b}=\infty$. With \cref{th:canonical_assumption,th:cgf_like} in place, we
are ready to set up and state our main result in the following section.

\subsection{The master theorem}

To state our main theorem on general exponential line-crossing inequalities, we
will make use of the following transforms of $\psi$:
\begin{align}
&\text{The Legendre-Fenchel transform: }
\psi^\star(u)
  \defineas \sup_{\lambda \in [0, \lambda_{\max})} [\lambda u - \psi(\lambda)],
  \quad \text{for } u \geq 0. \\
&\text{The ``decay'' transform: }
\decay(u) \defineas \sup\ebrace{
  \lambda \in (0, \lambda_{\max}) : \frac{\psi(\lambda)}{\lambda} \leq u
}, \quad \text{for } u \geq 0. \\
&\text{The ``slope'' transform: }
\slope(u) \defineas \frac{\psi(\psistarprime(u))}{\psistarprime(u)},
  \quad \text{for } u \in (0, \bar{b}).
\end{align}
In the definition of $\decay(u)$, we take the supremum of the empty set to equal
zero instead of the usual $-\infty$. For $u > 0$, this case can arise in
general, but not when $\psi$ is CGF-like. Note that $\decay(u)$ can also be
infinite. We call $\decay(u)$ the ``decay'' transform because it determines the
rate of exponential decay of the upcrossing probability bound in
\cref{th:uniform_chernoff}(a) below. We call $\slope(u)$ the ``slope'' transform
because it gives the slope of the linear boundary in
\cref{th:uniform_chernoff}(b); this is defined only when $\psi$ is
CGF-like. Defining $\slope(0) = 0$ and $\slope(\bar{b}) = \bar{b}$ when
$\bar{b} < \infty$, we find that $\slope(u)$ is continuous, strictly increasing,
and $0 \leq \slope(u) < u$ on $u \in [0, \bar{b})$ (see
\cref{th:slope_facts}). We do not know of other references for the slope transform, or other situations where it arises naturally. \Cref{tab:psi_transforms} gives examples of these
transforms for some common $\psi$ functions.

Our main theorem has four parts, each of which facilitates comparisons with a
particular related literature, as we discuss in \cref{sec:special_cases}. Recall
\cref{th:canonical_assumption} of the class $\subpsiclass$ of $l_0$-sub-$\psi$
processes, and the underlying filtration $(\Fcal_t)$ to which processes $(S_t)$
and $(V_t)$ are adapted.

\begin{theorem}\label{th:uniform_chernoff}
  Fix any $\psi: \R_{\geq 0} \to \R$ and $l_0 \in [1, \infty)$.
\begin{enumerate}[label=(\alph*)]
\item For any $a, b > 0$, we have
  \begin{align}
    \sup_{(S_t,V_t) \in \subpsiclass}
    \P\condparen{\exists t \in \Tcal: S_t \geq a + b V_t}{\Fcal_0}
      \leq l_0 \expebrace{-a \decay(b)}.
  \end{align}
\end{enumerate}
Additionally, whenever $\psi$ is CGF-like, the following three statements are
equivalent to statement (a).
\begin{enumerate}
\item[(b)] For any $m > 0$ and $x \in (0, m\bar{b})$, we have
  \begin{multline}
    \sup_{(S_t,V_t) \in \subpsiclass}
    \P\condparen{
      \exists t \in \Tcal :
      S_t \geq x + \slope\pfrac{x}{m} \cdot \eparen{V_t - m}
    }{\Fcal_0} \\ \leq l_0 \expebrace{-m \psi^\star\pfrac{x}{m}}.
  \end{multline}
\item[(c)] For any $m > 0$ and $x \in (0, \bar{b})$, we have
  \begin{multline}
    \sup_{(S_t,V_t) \in \subpsiclass}
    \P\condparen{
      \exists t \in \Tcal :
      \frac{S_t}{V_t} \geq x - \pfrac{x - \slope(x)}{V_t} \cdot (V_t - m)
    }{\Fcal_0} \\ \leq l_0 \expebrace{-m \psi^\star(x)}.
  \end{multline}
\item[(d)] For any $m \geq 0$, $x > 0$ and $b > 0$, we have (below we take
  $m\bar{b} = \infty$ whenever $\bar{b} = \infty$)
  \begin{multline}
    \sup_{(S_t,V_t) \in \subpsiclass}
    \P\condparen{
      \exists t \in \Tcal: V_t \geq m \text{ and } S_t \geq x + b (V_t - m)
    }{\Fcal_0}
    \\ \leq \begin{cases}
      l_0 \expebrace{-(x - (b \bmin \bar{b}) m) \decay(b)},
        & x > m \bar{b} \text{ or } \slope\eparen{\tfrac{x}{m}} > b \\
      l_0 \expebrace{-m \psi^\star\eparen{\tfrac{x}{m}}},
        & x \leq m \bar{b} \text{ and } \slope\eparen{\tfrac{x}{m}} \leq b.
    \end{cases} \label{eq:late_crossing_bound}
  \end{multline}
\end{enumerate}
\end{theorem}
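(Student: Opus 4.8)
The plan is to establish statement (a) directly from Ville's maximal inequality—the only genuinely probabilistic step—and then to obtain (b), (c), and (d) as reparametrizations of (a), using elementary set inclusions together with the convex-analytic identities relating $\decay$, $\slope$, and $\psi^\star$. For (a), I would fix any $\lambda \in (0, \lambda_{\max})$ with $\psi(\lambda)/\lambda \leq b$. On the crossing event $\{S_t \geq a + bV_t\}$ we then have $\lambda S_t - \psi(\lambda)V_t \geq \lambda a + (\lambda b - \psi(\lambda))V_t \geq \lambda a$, the last inequality using $\lambda b - \psi(\lambda) \geq 0$ and $V_t \geq 0$. Hence the nonnegative supermartingale $L_t(\lambda)$ furnished by \cref{th:canonical_assumption} satisfies $L_t(\lambda) \geq e^{\lambda a}$ on that event, so Ville's inequality together with $L_0(\lambda) \leq l_0$ gives $\P(\exists t : S_t \geq a + bV_t \mid \Fcal_0) \leq l_0 e^{-\lambda a}$. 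Letting $\lambda \uparrow \decay(b)$ yields the bound $l_0 e^{-a\decay(b)}$. This argument uses only $V_t \geq 0$ and the supermartingale property, so no regularity on $\psi$ is needed.

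Assuming now that $\psi$ is CGF-like, I would record from \cref{th:slope_facts} that $\lambda \mapsto \psi(\lambda)/\lambda$ is strictly increasing with $\decay(\slope(u)) = \psistarprime(u)$ and $\psi(\psistarprime(u)) = \slope(u)\,\psistarprime(u)$, whence the master identity $\psi^\star(u) = (u - \slope(u))\,\decay(\slope(u))$ follows from $\psi^\star(u) = u\,\psistarprime(u) - \psi(\psistarprime(u))$. For (a) $\Rightarrow$ (b), rewrite the boundary of (b) with $u = x/m$ as the line $S_t \geq \slope(u)V_t + m(u - \slope(u))$; this is an instance of (a) with slope $b = \slope(u)$ and intercept $a = m(u - \slope(u)) > 0$, and the identity converts $l_0 e^{-a\decay(b)}$ into exactly $l_0 e^{-m\psi^\star(u)}$. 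The converse (b) $\Rightarrow$ (a) runs the same correspondence backward: since $\slope$ is a continuous increasing bijection of $[0,\bar{b})$ onto itself, any slope $b \in (0,\bar{b})$ arises as $b = \slope(u)$ with $u = \slope^{-1}(b)$, and choosing $m = a/(u-b)$ recovers the (a)-bound, while slopes $b \geq \bar{b}$ (for which $\decay(b) = \lambda_{\max}$) follow by letting $b' \uparrow \bar{b}$ and using monotonicity of the crossing events in the slope. The equivalence (b) $\Leftrightarrow$ (c) is then purely algebraic: multiplying the self-normalized boundary of (c) through by $V_t$ produces the line $S_t \geq \slope(x)V_t + m(x - \slope(x))$, which coincides with the boundary of (b) under the substitution $x_{(b)} = m\,x_{(c)}$, and the two exponents match term for term.

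Finally, I would derive (d) from (a) and (b) by set inclusions, and recover (a) from (d) by setting $m = 0$. In the second case of (d), where $\slope(x/m) \leq b$ and $x \leq m\bar{b}$, the condition $b \geq \slope(x/m)$ gives $x + b(V_t - m) \geq x + \slope(x/m)(V_t - m)$ on $\{V_t \geq m\}$, so the event of (d) is contained in the unconstrained event of (b), whose bound is already $l_0 e^{-m\psi^\star(x/m)}$. In the first case, set $b' = b \bmin \bar{b}$; on $\{V_t \geq m\}$ we have $x + b(V_t - m) \geq x + b'(V_t - m)$, so the event of (d) lies inside the (a)-event $\{S_t \geq (x - b'm) + b'V_t\}$, whose intercept is positive in this regime, and (a) with slope $b'$ delivers $l_0 e^{-(x - b'm)\decay(b')} = l_0 e^{-(x - (b \bmin \bar{b})m)\decay(b)}$ via $\decay(b \bmin \bar{b}) = \decay(b)$. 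The main obstacle throughout is the convex-analytic bookkeeping at the boundary regimes—$\bar{b} = \infty$, $\lambda_{\max} = \infty$, and the endpoints where $\slope$, $\psi^\star$, or $\decay$ are defined only by limits—and confirming that the two cases of (d) are separated exactly by the sign of $b - \slope(x/m)$, equivalently by whether the unconstrained maximizer $\psistarprime(x/m)$ is feasible under $\lambda \leq \decay(b)$; the probabilistic content, by contrast, collapses entirely onto the single clean application of Ville's inequality in part (a).
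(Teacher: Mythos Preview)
Your proposal is correct and follows essentially the same route as the paper: Ville's inequality for (a), then convex-analytic reparametrizations and set inclusions for the equivalences. The one organizational difference is that you isolate the identity $\psi^\star(u) = (u - \slope(u))\,\decay(\slope(u))$ up front and use it to pass directly between (a) and (b), whereas the paper derives this implicitly by optimizing over all lines through $(m,x)$ and then closes a cycle (a)$\Rightarrow$(b)$\Rightarrow$(c)$\Rightarrow$(a); both arrive at the same computation. Your handling of (d) via the two set inclusions matches the paper's, and you correctly flag the endpoint bookkeeping (e.g.\ $x = m\bar{b}$, $b \geq \bar{b}$) as the only place requiring care---the paper resolves the $x = m\bar{b}$ edge by approximating with $x' < x$ and using closedness of $\psi^\star$, which is exactly the kind of limit argument you anticipate.
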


We give a straightforward proof in \cref{sec:proof_uniform_chernoff} that uses
only Ville's maximal inequality for nonnegative supermartingales
\citep{ville_etude_1939} and elementary convex
analysis. \Cref{th:uniform_chernoff} can be seen to unify and strengthen many
known exponential bounds, showing that we lose nothing in going from a
fixed-time to a uniform bound. This includes classical inequalities by Hoeffding
(\cref{th:chernoff_cases}a), Bennett and Freedman (\cref{th:chernoff_cases}b),
and Bernstein (\cref{th:chernoff_cases}c), along with their matrix extensions
due to Tropp and Mackey et al.  (\cref{th:chernoff_cases}a-c); discrete-time
scalar line-crossing inequalities due to Blackwell
(\cref{th:gamma_line_crossing,th:bernoulli_line_origin}) and Khan
(\cref{sec:line_crossing}); self-normalized bounds due to de la Pe\~na
(\cref{th:variance_normalized,th:ratio_subg}), Delyon
(\cref{th:delyon_uniform}), Bercu and Touati (\cref{th:delyon_uniform}), and Fan
(\cref{th:fan_uniform}); bounds for martingales in smooth Banach spaces due to
Pinelis (\cref{th:pinelis}); continuous-time bounds due to Shorack and Wellner
(\cref{th:continuous_bounds}) and van de Geer (\cref{th:continuous_bounds}); and
Wald's sequential probability ratio test
(\cref{th:sprt_summary}). Visualizations of how the bounds of
\cref{th:uniform_chernoff} relate to Freedman's and de la Pe\~na's inequalities
are provided in \cref{fig:three_bounds,fig:ratio_constant}. For convenience,
\cref{tab:improvements} lists the existing results we recover and our
corresponding corollaries, along with ways in which our analysis strengthens
conclusions.

For the remainder of the paper after \cref{sec:proof_uniform_chernoff}, we will
assume $\Fcal_0$ is the trivial $\sigma$-field and omit from our notation the
conditioning on $\Fcal_0$ in the results of \cref{th:uniform_chernoff} and its
corollaries.

\begin{table}
  \renewcommand{\arraystretch}{1.5}
  \centering
  \begin{tabular}{rlccccc}
    \toprule
    Existing result & Our result & [A] & [B] & [C] & [D] & [E] \\
    \midrule
    \citet{bernstein_theory_1927} & \cref{th:chernoff_cases}(c)
      & & \checkmark & \checkmark & \checkmark & \\
    \citet[eq.\ 8b]{bennett_probability_1962} & \cref{th:chernoff_cases}(b)
      & \checkmark & \checkmark & \checkmark & \checkmark & \\
    \citet[Theorem 2]{hoeffding_probability_1963}
      & \cref{th:chernoff_cases}(a)
      & \checkmark & \checkmark & & \checkmark & \\
    \citet[Theorem 1.6]{freedman_tail_1975} & \cref{th:chernoff_cases}(b)
      & & \checkmark & \checkmark & \checkmark & \\
    \citet[App.\ B, Ineq.\ 1]{shorack_empirical_1986}
      & \cref{th:continuous_bounds}(b)
      & & \checkmark & & & \\
    \citet[Theorems 3.4, 3.5]{pinelis_optimum_1994} & \cref{th:pinelis}
      & & \checkmark & & & \\
    \citet[Lemma 2.2]{van_de_geer_exponential_1995}
      & \cref{th:continuous_bounds}(c)
      & & \checkmark & & \checkmark \\
    \citet[Theorem 1]{blackwell_large_1997}
     & \cref{th:gamma_line_crossing}(a)
      & \checkmark & & \checkmark & \checkmark & \\
    \citet[Theorem 2]{blackwell_large_1997} & \cref{th:bernoulli_line_origin}
      &  & &  & \checkmark & \\
    \citet[Theorem 2]{blackwell_large_1997}
      & \cref{th:gamma_line_crossing}(b)
      & \checkmark & & \checkmark & \checkmark & \\
    \citet[Theorems 6.1, 1.2B (eq.\ 1.5)]{de_la_pena_general_1999}
      & \cref{th:variance_normalized}
      & & \checkmark & \checkmark & \checkmark & \\
    \citet[Theorem 6.2]{de_la_pena_general_1999} & \cref{th:ratio_subg}
      & &  & \checkmark & \checkmark & \checkmark \\
    \citet[Theorem 2.1]{bercu_exponential_2008} & \cref{th:delyon_uniform}
      & & \checkmark & & \checkmark & \checkmark \\
    \citet[Theorem 4]{delyon_exponential_2009} & \cref{th:delyon_uniform}
      & & \checkmark & & \checkmark & \\
    \citet[Theorem 4.2]{khan_$l_p$-version_2009}
      & \cref{th:uniform_chernoff}(b)
      & & \checkmark & \checkmark & \checkmark & \\
    \citet[Theorem 4.3]{khan_$l_p$-version_2009} & \cref{th:uniform_chernoff}(d)
      & & & \checkmark & \checkmark & \checkmark \\
    \citet[Theorem 1.2]{tropp_freedmans_2011} & \cref{th:chernoff_cases}(b)
      & & \checkmark & & & \\
    \citet[Theorem 1.3]{tropp_user-friendly_2012}
      & \cref{th:chernoff_cases}(a)
      & & \checkmark & & & \checkmark \\
    \citet[Theorem 1.4]{tropp_user-friendly_2012}
      & \cref{th:chernoff_cases}(c)
      & & \checkmark & & & \\
    \citet[Corollary 4.2]{mackey_matrix_2014} & \cref{th:chernoff_cases}(a)
      & \checkmark & \checkmark & & & \\
    \bottomrule
  \end{tabular}
  \captionsetup{singlelinecheck=off}
  \caption[Some existing results which are strengthened by
  \cref{th:uniform_chernoff}]{Some existing results which are strengthened by
    \cref{th:uniform_chernoff}, as detailed in
    \cref{sec:special_cases}. \label{tab:improvements} For clarity, we enumerate
    the different ways in which we strengthen or generalize existing results
    with the following mnemonics:
\begin{enumerate}
\item[{[A]}] Assumptions: we recover the result under weaker conditions on the
  distributional or dependence structure of the process.
\item[{[B]}] Boundary: we strengthen the result by replacing a fixed-time bound
  or a finite-horizon constant uniform boundary with an infinite-horizon linear
  uniform boundary which is everywhere at least as strong (i.e., low) as the
  fixed-time or finite-horizon bound.
\item[{[C]}] Continuous time: we extend a discrete-time result to include
  continuous time.
\item[{[D]}] Dimension: we extend a result for scalar process to one for
  $\Hcal^d$-valued processes, recovering the scalar result at $d = 1$.
\item[{[E]}] Exponent: we improve the exponent in the result's probability
  bound.
\end{enumerate}
}
\end{table}

\subsection{Proof of \cref{th:uniform_chernoff}}
\label{sec:proof_uniform_chernoff}

Throughout the proof, we write $\P_0(\cdot)$ for the conditional probability
$\P\condparen{\cdot}{\Fcal_0}$. Ville's maximal inequality for nonnegative
supermartingales (\citealp{ville_etude_1939}) is the foundation of all uniform
bounds in this paper. It is an infinite-horizon uniform extension of Markov's
inequality:
\begin{lemma}[Ville's inequality]\label{th:ville}
  If $(L_t)_{t \in \Tcal \union \brace{0}}$ is a nonnegative supermartingale
  with respect to the filtration $(\Fcal_t)_{t \in \Tcal \union \brace{0}}$,
  then for any $a > 0$, we have
  \begin{align}
    \P_0\eparen{\exists t \in \Tcal: L_t \geq a} \leq \frac{L_0}{a}.
  \end{align}
\end{lemma}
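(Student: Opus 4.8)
The plan is to prove this maximal inequality with the classical recipe for Markov-type bounds on a process: stop at the first threshold crossing, apply optional stopping for supermartingales, and use nonnegativity to extract a lower bound on the stopped value. Since the conclusion is stated conditionally on $\Fcal_0$, I would run the entire argument under $\E_0 = \E[\,\cdot \mid \Fcal_0]$, invoking the conditional form of optional stopping.

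First I would introduce the first-crossing time $\tau \defineas \inf\{t \in \Tcal : L_t \geq a\}$, with the convention $\inf \emptyset = \infty$, so that $\{\exists t \in \Tcal : L_t \geq a\} = \{\tau < \infty\}$. To avoid stopping at a possibly infinite time, I would localize at a finite horizon $n$ and work with the bounded stopping time $\tau \wedge n$. Because $(L_t)$ is a supermartingale, optional stopping between time $0$ and $\tau \wedge n$ gives $\E_0[L_{\tau \wedge n}] \leq L_0$ a.s.

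Next I would supply the matching lower bound. Splitting on $\{\tau \leq n\}$: there $L_{\tau \wedge n} = L_\tau \geq a$, while on the complement $L_{\tau \wedge n} = L_n \geq 0$ by nonnegativity, so $L_{\tau \wedge n} \geq a\,\mathbf{1}\{\tau \leq n\}$ pointwise. Applying $\E_0$ and chaining with the previous inequality yields $a\,\P_0(\tau \leq n) \leq \E_0[L_{\tau \wedge n}] \leq L_0$, hence $\P_0(\tau \leq n) \leq L_0/a$. Letting $n \to \infty$ and using conditional monotone convergence (as $\{\tau \leq n\} \uparrow \{\tau < \infty\}$) gives $\P_0(\tau < \infty) \leq L_0/a$, which is the claim. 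For $\Tcal = \N$ this completes the proof and is entirely routine.

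The step I expect to be the main obstacle is the continuous-time case $\Tcal = (0,\infty)$, where I would rely on the usual hypotheses assumed in this paper. Three regularity points need checking: (i) $\tau$ is a stopping time, which follows from the debut theorem since $[a,\infty)$ is closed and $(\Fcal_t)$ is right-continuous and complete; (ii) $L_\tau \geq a$ on $\{\tau < \infty\}$, which uses the c\`adl\`ag property, since any $t_k \downarrow \tau$ with $L_{t_k} \geq a$ forces $L_\tau = \lim_k L_{t_k} \geq a$ by right-continuity; and (iii) the optional stopping bound $\E_0[L_{\tau \wedge n}] \leq L_0$ for right-continuous supermartingales under the usual hypotheses. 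With these in place the discrete-time computation transfers verbatim. I would expect the measurability bookkeeping in (i)--(ii) to be the only delicate part; the rest is just Markov's inequality applied to a stopped nonnegative supermartingale.
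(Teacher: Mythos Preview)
Your proposal is correct and follows essentially the same route as the paper: define the first-crossing time $\tau$, apply optional stopping at the bounded time $\tau \wedge n$ to get $\E_0[L_{\tau\wedge n}] \leq L_0$, combine with $L_{\tau\wedge n} \geq a\,\mathbf{1}\{\tau \leq n\}$ (the paper phrases this step as Markov's inequality on $L_{\tau\wedge m}$), and send $n\to\infty$. Your treatment of the continuous-time regularity issues (that $\tau$ is a stopping time and $L_\tau \geq a$ on $\{\tau<\infty\}$ via right-continuity) is in fact more explicit than the paper's.
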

For completeness, we give an elementary proof of \cref{th:ville} in
\cref{sec:proof_ville}. Applying Ville's inequality to
\cref{th:canonical_assumption} gives, for any $(S_t,V_t) \in \subpsiclass$,
$\lambda \in (0, \lambda_{\max})$, and $z \in \R$,
\begin{multline}
\P_0\eparen{
  \exists t \in \Tcal: \expebrace{\lambda S_t - \psi(\lambda) V_t} \geq e^z
} \\
\leq \P_0\eparen{\exists t \in \Tcal: L_t \geq e^z}
\leq L_0 e^{-z} \leq l_0 e^{-z}.
\label{eq:simple_linear_bound}
\end{multline}
To derive \cref{th:uniform_chernoff}(a) from \eqref{eq:simple_linear_bound}, fix
$a, b > 0$ and choose $\lambda \in [0, \lambda_{\max})$ such that
$\psi(\lambda) \leq b \lambda$, supposing for the moment that some such value of
$\lambda$ exists. Then
\begin{align}
\P_0\eparen{\exists t \in \Tcal: S_t \geq a + b V_t}
  &= \P_0\eparen{
      \exists t \in \Tcal: \expebrace{\lambda S_t - b \lambda V_t}
      \geq e^{a \lambda}
    } \\
  &\leq \P_0\eparen{
      \exists t \in \Tcal: \expebrace{\lambda S_t - \psi(\lambda) V_t}
      \geq e^{a \lambda}
    } \\
  &\leq l_0 e^{-a \lambda},
\end{align}
applying \eqref{eq:simple_linear_bound} in the last step. This bound holds for
all choices of $\lambda$ in the set
$\brace{\lambda \in [0, \lambda_{\max}): \psi(\lambda) / \lambda \leq b}$, so to
minimize the final bound, we take the supremum over this set, recovering the
stated bound $l_0 e^{-a \decay(b)}$ by the definition of $\decay(b)$. If no
value $\lambda \in [0, \lambda_{\max})$ satisfies
$\psi(\lambda) \leq b \lambda$, then $\decay(b) = 0$ by definition, so that the
bound holds trivially. This shows that \cref{th:canonical_assumption} implies
\cref{th:uniform_chernoff}(a).

To complete the proof we will show that the four parts of
\cref{th:uniform_chernoff} are equivalent whenever $\psi$ is CGF-like. We
repeatedly use the well-known fact about the Legendre-Fenchel transform that
${\psi'}^{-1}(u) = \psistarprime(u)$ for $0 < u < \bar{b}$, which follows by
differentiating the identity
$\psi^\star(u) = u {\psi'}^{-1}(u) - \psi({\psi'}^{-1}(u))$.  We also require
some simple facts about $\psi(\lambda) / \lambda$:
\begin{lemma}\label{th:slope_facts}
Suppose $\psi$ is CGF-like with domain $[0, \lambda_{\max})$.
\begin{enumerate}[label=(\roman*)]
\item $\psi(\lambda) / \lambda < \psi'(\lambda)$ for all
  $\lambda \in (0, \lambda_{\max})$.
\item $\lambda \mapsto \psi(\lambda) / \lambda$ is continuous and strictly
  increasing on $\lambda > 0$.
\item
  $\inf_{\lambda \in (0, \lambda_{\max})} \psi(\lambda) / \lambda =
  \lim_{\lambda \downarrow 0} \psi(\lambda) / \lambda = 0$.
\item
  $\sup_{\lambda \in (0, \lambda_{\max})} \psi(\lambda) / \lambda =
  \lim_{\lambda \uparrow \lambda_{\max}} \psi(\lambda) / \lambda = \bar{b}$.
\item $\psi(\decay(b)) / \decay(b) = b$ for any $b \in (0, \bar{b})$, that is,
  $\decay(b)$ is the inverse of $\psi(\lambda) / \lambda$.
\item $\slope(u)$ is continuous, strictly increasing, and $0 < \slope(u) < u$
  for all $u \in (0, \bar{b})$.
\end{enumerate}
\end{lemma}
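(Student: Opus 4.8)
The plan is to reduce all six claims to elementary properties of the single auxiliary function $g(\lambda) \defineas \psi(\lambda)/\lambda$, since its continuity, strict monotonicity, and limiting values at the endpoints of $(0,\lambda_{\max})$ drive everything, and since both $\decay$ and $\slope$ can be rewritten in terms of $g$. First I would establish (i): writing $\psi(\lambda) = \int_0^\lambda \psi'(s)\,\d s$ (valid because $\psi(0)=0$ and $\psi$ is $C^1$), strict convexity makes $\psi'$ strictly increasing, so $\psi'(s) < \psi'(\lambda)$ for $s \in (0,\lambda)$ and the integral is strictly below $\lambda\psi'(\lambda)$; dividing by $\lambda$ gives (i). Statement (ii) then follows immediately by computing $g'(\lambda) = [\lambda\psi'(\lambda) - \psi(\lambda)]/\lambda^2$, whose numerator is positive by (i), while continuity of $g$ on $\lambda>0$ is clear. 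For (iii) I would observe that $\lim_{\lambda \downarrow 0} g(\lambda) = \psi'(0_+) = 0$ directly from $\psi(0)=0$, and combine this with the monotonicity from (ii) to identify the limit with the infimum.

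The main obstacle is (iv), the identification $\sup g = \bar b$, which forces a case split on whether $\lambda_{\max}$ is finite. I would represent $g(\lambda) = \lambda^{-1}\int_0^\lambda \psi'(s)\,\d s$ as the running average of $\psi'$. When $\lambda_{\max} = \infty$, this average tends to $\bar b = \lim_{\lambda\to\infty}\psi'(\lambda)$ regardless of whether $\bar b$ is finite or infinite, via a Cesàro-type argument that bounds the tail of the integral from below by $(\lambda-\Lambda)\lambda^{-1}(\bar b - \varepsilon)$. When $\lambda_{\max} < \infty$, the CGF-like hypothesis $\sup_\lambda \psi(\lambda) = \infty$ forces $\psi(\lambda)\to\infty$ as $\lambda \uparrow \lambda_{\max}$, hence $g(\lambda)\to\infty$; convexity on a bounded interval then forces $\psi'(\lambda)\to\infty$ as well, so $\bar b = \infty$ and both sides again agree. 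Combined with the bound $g(\lambda) < \psi'(\lambda) \le \bar b$ from (i), this pins down $\sup g = \bar b$, with the supremum attained in the limit by monotonicity.

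Finally, (v) and (vi) are consequences of (ii)--(iv). By (ii)--(iv), $g$ is a continuous, strictly increasing bijection of $(0,\lambda_{\max})$ onto $(0,\bar b)$; the defining set for $\decay(b)$ is therefore $\{\lambda : g(\lambda) \le b\} = (0, g^{-1}(b)]$, whose supremum is $g^{-1}(b)$, and continuity yields $g(\decay(b)) = b$, which is exactly (v). For (vi), using the stated identity $\psistarprime(u) = {\psi'}^{-1}(u)$ on $(0,\bar b)$, I would write $\slope(u) = g({\psi'}^{-1}(u))$; since ${\psi'}^{-1}$ is a continuous strictly increasing bijection onto $(0,\lambda_{\max})$ and $g$ is likewise by (ii), the composition $\slope = g \circ {\psi'}^{-1}$ is continuous and strictly increasing. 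Setting $\lambda = {\psi'}^{-1}(u)$, positivity of $\psi$ on $(0,\lambda_{\max})$ (immediate since $\psi'(0_+)=0$ and $\psi'' > 0$ make $\psi$ strictly increasing from $\psi(0)=0$) gives $\slope(u) = g(\lambda) > 0$, while (i) gives $g(\lambda) < \psi'(\lambda) = u$, which together establish $0 < \slope(u) < u$ and complete the proof.
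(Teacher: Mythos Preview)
Your proposal is correct and follows essentially the same line of argument as the paper: integral representation plus strict convexity for (i), differentiation for (ii), endpoint limits for (iii)--(iv), bijection reasoning for (v), and the composition $\slope = g \circ {\psi'}^{-1}$ together with (i) for (vi). The only notable difference is that the paper dispatches (iii) and (iv) by a one-line appeal to L'H\^opital's rule, whereas you compute the limit at $0$ directly from the definition of the right derivative and handle (iv) via an explicit case split on $\lambda_{\max}$ with a Ces\`aro-type argument; your treatment of (iv) is in fact more careful, since when $\lambda_{\max} < \infty$ the denominator does not tend to $0$ or $\infty$ and L'H\^opital does not literally apply.
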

\begin{proof}[Proof of \cref{th:slope_facts}]
  To see (i), write
  $\psi(\lambda) = \int_0^\lambda \psi'(t) \d t < \lambda \psi'(\lambda)$, where
  the inequality follows since $\psi$ is strictly convex so that $\psi'$ is
  strictly increasing. For (ii), the function is continuous because $\psi$ is
  continuous, and differentiating reveals it to be strictly increasing by part
  (i). L'Hôpital's rule implies (iii) along with the assumptions
  $\psi(\lambda) = \psi'(\lambda) = 0$ at $\lambda = 0$, and implies (iv) along
  with the CGF-like assumption $\sup_\lambda \psi(\lambda) = \infty$, which
  means $\psi(\lambda) \uparrow \infty$ as $\lambda \uparrow \lambda_{\max}$
  since $\psi$ is convex. Part (v) follows from the definition of
  $\decay(\cdot)$ and parts (ii), (iii) and (iv). To obtain (vi), note that
  $\slope$ is the composition of $\lambda \mapsto \psi(\lambda) / \lambda$ with
  $\psistarprime$. Both of these are continuous and strictly increasing, the
  former by part (ii) and the latter since $\psistarprime = {\psi'}^{-1}$ and
  $\psi'$ is continuous and strictly increasing by the CGF-like assumption. As
  $u \downarrow 0$, we have $\psistarprime(u) = {\psi'}^{-1}(u) \downarrow 0$,
  so $\slope(u) \downarrow 0$ since $\psi(0) = \psi'(0_+) = 0$. Likewise, if
  $\bar{b} < \infty$, then as $u \uparrow \bar{b}$,
  $\psistarprime(u) \uparrow \lambda_{\max}$ and $\slope(u) \uparrow
  \bar{b}$. Hence $\slope(u)$ is continuous as defined. Next, note that
  $\psi(u) > 0$ for $u > 0$ since $\psi$ is strictly convex with
  $\psi(0) = \psi'(0_+) = 0$, and $\psistarprime(u) = {\psi'}^{-1}(u) > 0$ since
  $\psi'(\lambda)$ increases from zero at $\lambda = 0$ to $\bar{b}$ as
  $\lambda \uparrow \lambda_{\max}$. Hence $\slope(u) > 0$ for $u > 0$. Finally,
  use part (i) to write
  $\slope(u) = \psi(\psistarprime(u)) / \psistarprime(u) <
  \psi'(\psistarprime(u)) = u$, using the fact that
  $\psistarprime(u) = {\psi'}^{-1}(u)$ for $u \in (0, \bar{b})$.
\end{proof}

\Cref{th:slope_facts} allows us to prove the equivalences among the parts of
\cref{th:uniform_chernoff} as follows.

\begin{itemize}
\item $(a) \implies (b)$: Fix $m > 0$ and $x \in (0, m\bar{b})$. Any line with
  slope $b \in (0, x/m)$ and intercept $x - bm$ passes through the point
  $(m, x)$ in the $(V_t, S_t)$ plane, and part $(a)$ yields
  \begin{align}
    \P_0\eparen{\exists t \in \Tcal: S_t \geq x + b (V_t - m)}
      &\leq l_0 \expebrace{-(x - bm) \decay(b)} \\
      &= l_0 \expebrace{
           -m \eparen{\frac{x}{m} \cdot \decay(b) - \psi(\decay(b))}
         }
  \end{align}
  using \cref{th:slope_facts}(v) in the second step. Now we choose the slope $b$
  to minimize the probability bound. The unconstrained optimizer $b_\star$
  satisfies ${\psi'(\decay(b_\star)) = x / m}$, and a solution is guaranteed to
  exist by our restriction on $x$. This solution is given by
  $\decay(b_\star) = {\psi'}^{-1}(x/m) = \psistarprime(x/m)$. Hence
  $b_\star = \slope(x/m)$ using \cref{th:slope_facts}(v) and the definition of
  $\slope(\cdot)$. \Cref{th:slope_facts}(vi) shows $0 < b_\star < x/m$,
  verifying that $b_\star$ is feasible. Identify the Legendre-Fenchel
  transformation
  $\psi^\star(x/m) = (x/m) \decay(b_\star) - \psi(\decay(b_\star))$ to complete
  the proof of part $(b)$.
\item $(b) \implies (c)$: Fix $m > 0$ and $x \in (0, \bar{b})$ and observe that
  \begin{multline}
    \P_0\eparen{
      \exists t \in \Tcal:
      \frac{S_t}{V_t} \geq x - \pfrac{x - \slope(x)}{V_t} \cdot (V_t - m)
    }
    \\= \P_0\eparen{\exists t \in \Tcal:
         S_t \geq m x + \slope(x) \cdot (V_t - m)}.
  \end{multline}
  Now applying part $(b)$ with values $m$ and $m x$ yields part $(c)$.
\item $(c) \implies (a)$: Fix $a, b > 0$. Suppose first that $b < \bar{b}$, and
  set $x = \psi'(\decay(b))$ and $m = a / (x - \slope(x))$. Recalling
  $\psistarprime = {\psi'}^{-1}$ we see that
  $\slope(x) = \psi(\decay(b)) / \decay(b) = b$ by
  \cref{th:slope_facts}(v). Also, \cref{th:slope_facts}(vi) shows that $m >
  0$. Now apply part $(c)$ to obtain
  \begin{align}
    \P_0\eparen{\exists t \in \Tcal: S_t \geq a + b V_t}
      &\leq l_0 \expebrace{-a \cdot \frac{\psi^\star(x)}{x - \slope(x)}} \\
      &= l_0 \expebrace{
        -a \cdot \frac{\psi^\star(x) \cdot \psistarprime(x)}
                      {x \psistarprime(x) - \psi(\psistarprime(x))}
      }.
  \end{align}
  Recognizing the Legendre-Fenchel transform in the denominator of the final
  exponent, we see that the probability bound equals
  $l_0 \expebrace{-a \psistarprime(x)}$. Again using
  $\psistarprime(x) = {\psi'}^{-1}(x) = \decay(b)$
  yields part $(a)$.

  If instead $b \geq \bar{b}$, then the above argument yields
  \begin{multline}
    \P_0\eparen{\exists t \in \Tcal: S_t \geq a + b V_t}
      \leq \inf_{b' < \bar{b}}
        \P_0\eparen{\exists t \in \Tcal: S_t \geq a + b' V_t}
      \\ \leq l_0 \expebrace{-a \sup_{b' < \bar{b}} D(b')}.
  \end{multline}
  But $\sup_{b' < \bar{b}} D(b') = \lambda_{\max} = D(b)$ from the definition of
  $D(\cdot)$.

\item $(a) \implies (d)$: Fix $m \geq 0$ and $x, b > 0$. Observe that
  $\brace{\exists t \in \Tcal: V_t \geq m,\, S_t \geq x + b(V_t - m)} \subseteq
  \brace{\exists t \in \Tcal: S_t \geq x' + b' (V_t - m)}$ for any
  $0 < x' \leq x$ and $0 < b' \leq b$, so part $(a)$ yields
  \begin{align}
    \P_0\eparen{\exists t \in \Tcal: V_t \geq m,\, S_t \geq x + b(V_t - m)}
      \leq l_0 \expebrace{-(x' - b' m) D(b')}
  \end{align}
  for any $(x', b')$ in the feasible set
  $\brace{x' \in (0, x], b' \in (0, b]: x' > m b'}$. If $x > m \bar{b}$, then
  $(x, b \bmin \bar{b})$ is feasible; note that
  $D(b \bmin \bar{b}) = D(b)$ by the definition of $D(\cdot)$. If
  $x \leq m \bar{b}$ and $b < s(x/m)$, then by \cref{th:slope_facts}(vi) and the
  definition $\slope(\bar{b}) \defineas \bar{b}$, we have $b < x/m$, so
  $(x, b)$ is feasible and $b \leq \bar{b}$. Combining these two cases,
  we have
  \begin{align}
    \P_0\eparen{\exists t \in \Tcal: V_t \geq m,\, S_t \geq x + b(V_t - m)}
      \leq l_0 \expebrace{-(x - (b \bmin \bar{b}) m) D(b)}
  \end{align}
  whenever $x > m \bar{b}$ or $b < s(x/m)$, proving the first case in
  \eqref{eq:late_crossing_bound}. On the other hand, if $x \leq m \bar{b}$ and
  $s(x/m) \leq b$, then $(x', s(x'/m))$ is feasible for any $x' < x$, by
  \cref{th:slope_facts}(vi). This yields
  \begin{align}
    \P_0\eparen{\exists t \in \Tcal: V_t \geq m,\, S_t \geq x + b(V_t - m)}
      \leq l_0 \expebrace{-m \psi^\star\pfrac{x'}{m}}
  \end{align}
  as in part $(b)$. We minimize the probability bound over $x' < x$, noting that
  $\sup_{x' < x} \psi^\star(x'/m) = \psi^\star(x/m)$ since $\psi^\star$ is
  increasing (as $\psi$ is CGF-like) and closed \citep[Theorem
  12.2]{rockafellar_convex_1970}. This proves the second case in
  \eqref{eq:late_crossing_bound}.
\item $(d) \implies (a)$: set $m = 0$ and $x = a$ to recover part $(a)$. \qed
\end{itemize}

It is worth noting here that, unlike the proofs of \citet{freedman_tail_1975},
\citet{khan_$l_p$-version_2009}, \citet{tropp_freedmans_2011}, and
\citet{fan_exponential_2015}, we do not explicitly construct a stopping time in
our proof. While an optional stopping argument is hidden within the proof of
Ville's inequality, the underlying stopping time here is different from that in
the aforementioned citations.

\subsection{Interpreting the theorem}

\begin{figure}[h!]
\includegraphics{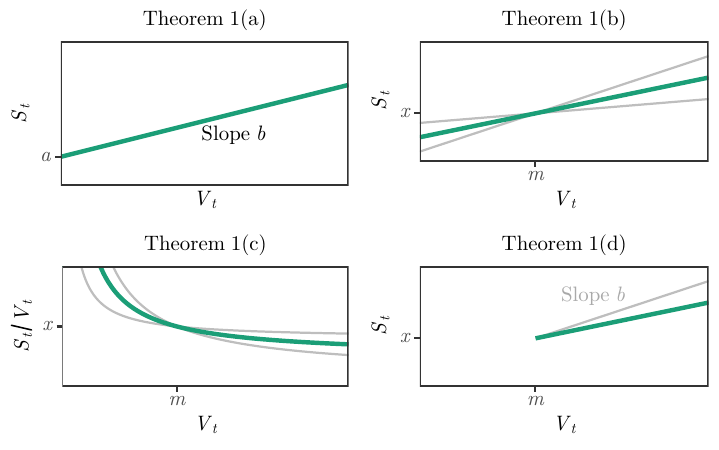}
\caption{Illustration of the equivalent statements of
  \cref{th:uniform_chernoff}, as described in the
  text. \label{fig:theorem_1_illustration}}
\end{figure}

It is instructive to think of the parts of \cref{th:uniform_chernoff} as
statements about the process $(V_t, S_t)$ or $(V_t, S_t / V_t)$ in $\R^2$. Many
of our results are better understood via this geometric intuition. Specifically,
\Cref{fig:theorem_1_illustration} illustrates the following points:
\begin{itemize}
\item \Cref{th:uniform_chernoff}(a) takes a given line $a + b V_t$ and bounds
  its $S_t$-upcrossing probability.
\item \Cref{th:uniform_chernoff}(b) takes a point $(m, x)$ in the
  $(V_t, S_t)$-plane and, out of the infinitely many lines passing through it,
  chooses the one which yields the tightest upper bound on the corresponding
  $S_t$-upcrossing probability.
\item \Cref{th:uniform_chernoff}(c) is like part (b), but instead of looking at
  $S_t$, we look at $S_t / V_t$, fix a point $(m, x)$ in the
  $(V_t, S_t / V_t)$-plane, and choose from among the infinitely many curves
  $b + a/V_t$ passing through it to minimize the probability bound.
\item The intuition for \cref{th:uniform_chernoff}(d) is as follows. If we want
  to bound the upcrossing probability of the line $(x - bm) + b V_t$ on
  $\brace{V_t \geq m}$, we can clearly obtain a conservative bound from
  \cref{th:uniform_chernoff}(a) with $a = x - bm$. This yields the first case in
  \eqref{eq:late_crossing_bound}. However, we can also apply
  \cref{th:uniform_chernoff}(b) with the values $m, x$, obtaining a bound on the
  upcrossing probability for a line which passes through the point $(m,x)$ in
  the $(V_t,S_t)$-plane, and this line yields the minimum possible probability
  bound among all lines passing through $(m,x)$. If the slope of this line,
  $\slope(x/m)$, is less than $b$, then this optimal probability bound is
  conservative for the upcrossing probability over the original line
  $x + b(V_t - m)$ on $\brace{V_t \geq m}$. This gives the second case in
  \eqref{eq:late_crossing_bound}, which is guaranteed to be at least as small as
  the bound in the first case when $\slope(x/m) \leq b$.
\end{itemize}

We make some additional remarks below:
\begin{itemize}
\item We extend bounds for discrete-time scalar-valued processes to include both
  discrete-time matrix-valued processes and continuous-time scalar-valued
  processes, but we do not handle continuous-time matrix-valued processes, as
  this seems to require further technical developments beyond the scope of this
  paper (see \citet{bacry_concentration_2018} for one approach to exponential
  bounds in this case). We write [C or D] when discussing extensions to existing
  results to emphasize this fact (see \cref{tab:improvements}).
\item Most of this paper is concerned with right-tail bounds, hence the
  restriction to $\lambda \geq 0$ in \cref{th:canonical_assumption}. It is
  understood that identical techniques yield left-tail bounds upon verifying
  that \cref{th:canonical_assumption} holds for $(-S_t)$.
\item The purpose of excluding $\psi$ being CGF-like from
  \cref{th:canonical_assumption} is to separate the truth of statement (a),
  which follows solely from \cref{th:canonical_assumption}, from its equivalence to (b), (c),
  and (d), which follows from $\psi$ being CGF-like.
\end{itemize}

\subsection{Three simple examples}\label{sec:thm_examples}

We illustrate some simple instantiations of our theorem with three examples: a
sum of coin flips, a discrete-time concentration inequality for random matrices,
and a continuous-time scalar Brownian motion. These examples make use of several
results from \cref{sec:sufficient_conditions} describing conditions under which
a process is sub-$\psi$; such results may be taken for granted on a first
reading.

\begin{example}[Coin flipping]\label{ex:coin_flipping}
  Suppose $X_i \asiid \Ber(p)$, and let $S_t = \sum_{i=1}^t (X_i - p)$ denote
  the centered sum. The CGF of each increment of $S_t$, scaled by $1/[p(1-p)]$,
  is
  $\psi_B(\lambda) \defineas [p(1-p)]^{-1} \log \E \expebrace{\lambda(X_i - p)}
  = [p(1-p)]^{-1} \log(p e^{(1-p)\lambda} + (1-p) e^{-p\lambda})$, so that
  $\lambda_{\max} = \infty$ and $\bar{b} = 1/p$. One may directly check the
  martingale property to confirm that
  ${L_t(\lambda) \defineas \expebrace{\lambda S_t - \psi_B(\lambda) p (1-p) t}}$
  is a martingale for any $\lambda$, so that $(S_t)$ is 1-sub-$\psi_B$ with
  $V_t = p (1-p) t$. Then, for any $t_0 \in \N$ and $x \in (0, (1-p) t_0)$,
  setting $m = p (1-p) t_0$ in \cref{th:uniform_chernoff}(b) yields
\begin{align}
&\P\eparen{\exists t \in \N:
    S_t \geq x + p(1-p) \slope_B\pfrac{x}{p (1-p) t_0} \cdot (t - t_0)} \\
  &\qquad \leq \expebrace{-t_0 \KL{p + \frac{x}{t_0}}{p}} \\
  &\qquad = \ebracket{
       \pfrac{p}{p + x/t_0}^{p + x/t_0} \pfrac{1-p}{1-p - x/t_0}^{1-p - x/t_0}
     }^{t_0}.
\end{align}
Here KL denotes the Bernoulli Kullback-Leibler divergence,
\begin{align}
  \KL{q}{p} = q \log\pfrac{q}{p} + (1 - q) \log\pfrac{1-q}{1-p}.
\end{align}
It takes some algebra to obtain this KL as the Legendre-Fenchel transform of
$\psi_B$; in \cref{tab:psi_transforms} we summarize all such transforms used in
this paper. The final expression is Equation (2.1) of
\citet{hoeffding_probability_1963}, but here we have a bound not just for the
deviation of $S_m$ above its expectation at the fixed time $m$, but for the
upper deviations of $S_t$ for all $t \in \N$, simultaneously. We can use this to
sequentially test a hypothesis about $p$, or to construct a sequence of
confidence intervals for $p$ possessing a coverage guarantee holding uniformly
over unbounded time.

The slope transform $\slope_B(u)$ for $\psi_B$, given in
\cref{tab:psi_transforms}, is unwieldy. To derive a more analytically convenient
bound, we use the fact that $p (1-p) \psi_B(\lambda) \leq \lambda^2 / 8$ for all
$\lambda \geq 0$; see the proof of \cref{th:psi_relations}, part 2. Hence
$\expebrace{\lambda S_t - \lambda^2 t / 8} \leq L_t(\lambda)$ with $L_t$ defined
as above, so $(S_t)$ is also 1-sub-$\psi$ with $\psi(\lambda) = \lambda^2 / 8$
and $V_t = t$. Now \cref{th:uniform_chernoff}(b) yields
\begin{align}
\P\eparen{\exists t \in \N: S_t \geq x + \frac{x}{2m} \cdot (t - m)}
  \leq \expebrace{-\frac{2x^2}{m}}.
  \label{eq:example_subg}
\end{align}
This is equivalent to Blackwell's line-crossing inequality
\eqref{eq:blackwell_intro}, and in the form \eqref{eq:example_subg} it is clear
that it recovers Hoeffding's inequality at the fixed time $t = m$. Instead of
using $p (1-p) \psi_B(\lambda) \leq \lambda^2 / 8$, we might alternatively use
$\psi_B(\lambda) \leq (1-2p)^{-2} (e^{(1-2p)\lambda} - (1-2p)\lambda - 1)$; see
the proof of \cref{th:psi_relations}, part 3. This will yield a uniform
extension of Bennett's inequality \eqref{eq:bennett_intro} which improves upon
Hoeffding's inequality substantially for values of $p$ near zero and one. We
will see other examples of such ``sub-Poisson'' bounds below.
\end{example}

\begin{example}[Covariance estimation for a spiked random vector ensemble]
  The estimation of a covariance matrix via an i.i.d.\ sample is a common
  application of exponential matrix concentration, starting with
  \citet{rudelson_random_1999}. See also \citet{vershynin_introduction_2012},
  \citet{gittens_tail_2011}, \citet{tropp_introduction_2015}, and
  \citet{koltchinskii_concentration_2017} for more recent treatments; this
  particular example is drawn from \citet{wainwright_high-dimensional_2017}. Let
  $d \geq 2$ and consider $\R^d$-valued, mean-zero observations
  $X_i = \sqrt{d} \xi_i e_{U_i}$, where $\xi_i \asiid \Rademacher$,
  $(e_k)_{k = 1}^d$ are the standard basis vectors and
  $U_i \asiid \Unif\ebrace{1, \dots, d}$. What can we say about the
  concentration of the sample covariance matrix
  $\widehat{\Sigma}_t \defineas t^{-1} \sum_{i=1}^t X_i X_i^T$ around the true
  covariance $I_d$, the $d \times d$ identity matrix? Let $\gamma_{\max}(A)$
  denote the maximum eigenvalue of a matrix $A$. We have
  $\gamma_{\max}(X_i X_i^T - I_d) = d - 1$ always, and
  $\E (X_i X_i^T - I_d)^2 = \pfrac{(d-1)^2}{d} I_d$. Hence
  \cref{th:discrete_facts}(c) shows that
  $S_t = t \gamma_{\max}(\widehat{\Sigma}_t - I_d)$ is $d$-sub-$\psi$ with
  variance process $V_t = \frac{(d - 1)^2 t}{d}$, where
\begin{align}
\psi(\lambda) = \frac{e^{(d-1) \lambda} - (d-1) \lambda - 1}{(d-1)^2}
  \leq \frac{\lambda^2}{2(1 - (d - 1) \lambda / 3)}.
  \label{eq:spiked_ensemble_psi}
\end{align}
Here the inequality holds for all $\lambda \in [0, 3/(d-1))$ as demonstrated in
the proof of \cref{th:psi_relations}, part 5. Applying
\cref{th:uniform_chernoff}(c) with $\psi$ equal to the final expression in
\eqref{eq:spiked_ensemble_psi}, we obtain, after some algebra, for any
$x, m > 0$,
\begin{multline}
\P\eparen{
  \exists t \in \N: \gamma_{\max}\eparen{\widehat{\Sigma}_t - I_d}
  \geq x \pfrac{1 + \frac{m}{t}\sqrt{1 + 2x/3(d-1)}}{1 + \sqrt{1 + 2x/3(d-1)}}
} \\ \leq d \expebrace{-\frac{mx^2}{2(d-1)\ebracket{(d-1) / d + x / 3}}}.
\label{eq:spiked_ensemble_bound}
\end{multline}
At the fixed time $t = m$, this implies
\begin{align}
\gamma_{\max}\eparen{\widehat{\Sigma}_m - I_d}
  \leq \sqrt{\frac{2(d-1)^2 \log(d/\alpha)}{dm}}
       + \frac{2(d-1) \log(d/\alpha)}{3m}
\end{align}
with probability at least $1-\alpha$, a known fixed-sample result
\citep{wainwright_high-dimensional_2017}. However, as above,
\eqref{eq:spiked_ensemble_bound} gives a bound on the upper deviations of
$\widehat{\Sigma}_t$ for all $t \in \N$ simultaneously. Such a bound enables,
for example, sequential hypothesis tests concerning the true covariance matrix.
\end{example}

\begin{example}[Line-crossing for Brownian motion]\label{ex:bm}
  Let $(S_t)_{t \in [0, \infty)}$ denote standard Brownian motion. It is a
  standard fact that the process $\expebrace{\lambda S_t - \lambda^2 t / 2}$ is
  a martingale, so that $(S_t)$ is 1-sub-$\psi$ with
  $\psi(\lambda) = \lambda^2 / 2$ and $V_t = t$. In this case,
  \cref{th:uniform_chernoff} says that, for any $a,b > 0$,
\begin{align}
\P\eparen{\exists t \in (0, \infty): S_t \geq a + bt} \leq e^{-2ab},
\end{align}
a well-known line-crossing bound for Brownian motion, which in fact holds with
equality \citep[Exercise 7.5.2]{durrett_probability:_2017}.
\end{example}

\section{Sufficient conditions for sub-$\psi$ processes}
\label{sec:sufficient_conditions}

Much of the power of \cref{th:canonical_assumption} comes from the array of
sufficient conditions for it which have been discovered under diverse,
nonparametric conditions. In this section, we define some standard $\psi$
functions and collect a broad set of conditions from the literature for a
process $(S_t)$ to be sub-$\psi$ with one of these functions, summarized in
\cref{tab:sufficient_conditions_1,tab:sufficient_conditions_2}. In other words,
we collect here some families of process pairs $(S_t,V_t)$ which are contained
within $\subpsiclass$ for standard choices of $\psi$. All discrete-time results
in this paper use $S_t = \gamma_{\max}(Y_t)$ where $(Y_t)_{t \in \N}$ is a
martingale taking values in $\Hcal^d$, with the exception of \cref{sec:banach},
which deals with martingales in abstract Banach spaces. Typically, setting
$d = 1$ recovers the corresponding known scalar result exactly. We note also
that our results for Hermitian matrices extend directly to rectangular matrices
using Hermitian dilations \citep{tropp_user-friendly_2012}, as we illustrate in
\cref{th:rectangular_series}.

\subsection{Five useful $\psi$ functions}\label{sec:psi_functions}

We define five particular $\psi$ functions corresponding to five sub-$\psi$
cases: the sub-Gaussian case in Hoeffding's inequality, the ``sub-gamma'' case
corresponding to Bernstein's inequality, the sub-Poisson case from Bennett's and
Freedman's inequalities, and the sub-exponential and sub-Bernoulli cases which
are used in several other existing bounds. The $\psi$ functions and
corresponding transforms for these five cases are summarized in
\cref{tab:psi_transforms}, while \cref{fig:psi_relations} summarizes
relationships among these cases, with \cref{th:psi_relations} containing the
formal statements. Recall
$\bar{b} = \sup_{\lambda \in [0, \lambda_{\max})} \psi'(\lambda)$ from
\cref{th:cgf_like}, and note that we take $1/0 = \infty$ by convention in the
expressions for $\lambda_{\max}$ and $\bar{b}$ below.

\begin{enumerate}
\item We say $(S_t)$ is \emph{sub-Bernoulli} with range parameters $g, h > 0$
  when it is sub-$\psi_{B,g,h}$ for some suitable variance process $(V_t)$,
  where
  \begin{align}
    \psi_{B,g,h}(\lambda) \defineas
      \frac{1}{gh} \log\pfrac{g e^{h\lambda} + h e^{-g \lambda}}{g + h}
    \quad \text{for } 0 \leq \lambda < \infty = \lambda_{\max},
  \end{align}
  which is the scaled CGF of a mean-zero random variable taking values $-g$ and
  $h$. Here $\bar{b} = 1/g$.
\item We say $(S_t)$ is \emph{sub-Gaussian} when it is sub-$\psi_N$ for
  some suitable variance process $(V_t)$, where
  \begin{align}
    \psi_N(\lambda) \defineas \lambda^2 / 2
    \quad \text{for } 0 \leq \lambda < \infty = \lambda_{\max}.
  \end{align}
  Here $\bar{b} = \infty$.
\item We say $(S_t)$ is \emph{sub-Poisson} with scale parameter
    $c \in \R$ when it is sub-$\psi_{P,c}$ for some suitable variance process
  $(V_t)$, where
  \begin{align}
    \psi_{P,c}(\lambda) \defineas \frac{e^{c\lambda} - c\lambda - 1}{c^2}
    \quad \text{for } 0 \leq \lambda < \infty = \lambda_{\max}.
  \end{align}
  By taking the limit, we define $\psi_{P,0} = \psi_N$. Here
  $\bar{b} = \abs{c \bmin 0}^{-1}$.
\item We say $(S_t)$ is \emph{sub-gamma} with scale parameter
    $c \in \R$ when it is sub-$\psi_{G,c}$ for some suitable variance process
  $(V_t)$, where
  \begin{align}
    \psi_{G,c}(\lambda) \defineas \frac{\lambda^2}{2(1 - c\lambda)}
    \quad \text{for } 0 \leq \lambda < \frac{1}{c \bmax 0} = \lambda_{\max},
  \end{align}
  Here $\bar{b} = \abs{2c \bmin 0}^{-1}$.
\item We say $(S_t)$ is \emph{sub-exponential} with scale parameter
  $c \in \R$ when it is sub-$\psi_{E,c}$ for some suitable
  variance process $(V_t)$, where
  \begin{align}
    \psi_{E,c}(\lambda) \defineas \frac{-\log(1-c\lambda) - c\lambda}{c^2},
    \quad \text{for } 0 \leq \lambda < \frac{1}{c \bmax 0} = \lambda_{\max}.
  \end{align}
  By taking the limit, we define $\psi_{E,0} = \psi_N$. Here
  $\bar{b} = \abs{c \bmin 0}^{-1}$.
\end{enumerate}

We will typically write $\psi_B$, $\psi_P$, $\psi_G$, and $\psi_E$, omitting the
range or scale parameters from the notation when they are clear from the
context. We follow the definition of sub-gamma from
\citet{boucheron_concentration_2013}, despite the somewhat inconsistent
terminology: unlike the other four cases, $\psi_G$ is not the CGF of a
gamma-distributed random variable. It is convenient for a number of reasons: it
includes $\psi_N$ as a special case, it gives a useful upper bound for $\psi_P$
(see \cref{th:psi_relations} part 5, below), it falls naturally out of the use
of a Bernstein condition on higher moments to bound the CGF, and it is simple
enough to permit analytically tractable results for the slope and decay
transforms and the various bounds to follow. We remark also that our definition
of sub-exponential in terms of the CGF of the exponential distribution follows
that of \citet[Exercise 2.22]{boucheron_concentration_2013}, but differs from
another well-known definition which says that the CGF is bounded by
$\lambda^2 / 2$ for $\lambda$ in some neighborhood of zero. The two are
equivalent up to appropriate choice of constants, as detailed in
\cref{sec:equiv_sub_exp}.

The sub-gamma and sub-exponential functions $\psi_{G,c}$ and $\psi_{E,c}$
possess the following universality property, which we prove in
\cref{sec:proof_universal}.
\begin{proposition}\label{th:universal}
  For any twice-differentiable $\psi: [0, \lambda_{\max}) \to \R$ with
  $\psi(0) = \psi'(0_+) = 0$, there exist constants $a, c > 0$ such that
  $\psi(\lambda) \leq a \psi_{G,c}(\lambda)$ for all
  $\lambda \in [0, \lambda_{\max})$. Likewise, there exists constants
  $\tilde{a}, \tilde{c} > 0$ such that
  $\psi(\lambda) \leq \tilde{a} \psi_{E,\tilde{c}}(\lambda)$ for all $\lambda\in [0, \lambda_{\max})$.
\end{proposition}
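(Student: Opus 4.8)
The plan is to reduce the claim to the boundedness of a single ratio on a bounded interval, exploiting the freedom to choose the scale parameter $c$. The key observation is that, extending the convex functions $\psi_{G,c}$ and $\psi_{E,c}$ to equal $+\infty$ outside their natural domain $[0, 1/c)$, the desired inequality holds automatically on $[1/c, \lambda_{\max})$, since there the right-hand side is infinite while $\psi$ is real-valued. Thus it suffices to pick $c$ with $1/c < \lambda_{\max}$ — possible whether $\lambda_{\max}$ is finite (take any $c > 1/\lambda_{\max}$) or infinite (take any $c > 0$) — and then to bound the ratio $r(\lambda) \defineas \psi(\lambda) / \psi_{G,c}(\lambda)$ on the open interval $(0, 1/c)$.

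Next I would compute the two boundary limits of $r$. Near the origin, Taylor's theorem together with $\psi(0) = \psi'(0_+) = 0$ gives $\psi(\lambda) = \tfrac{1}{2}\psi''(0)\lambda^2 + o(\lambda^2)$, while a direct expansion gives $\psi_{G,c}(\lambda) = \tfrac{1}{2}\lambda^2 + O(\lambda^3)$; hence $r(\lambda) \to \psi''(0)$ as $\lambda \downarrow 0$, a finite limit. At the other endpoint, because $1/c < \lambda_{\max}$ the numerator $\psi(\lambda)$ stays finite as $\lambda \uparrow 1/c$ whereas $\psi_{G,c}(\lambda) \to +\infty$, so $r(\lambda) \to 0$. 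Since $r$ is continuous on $(0, 1/c)$, where $\psi_{G,c} > 0$, and extends continuously to the compact interval $[0, 1/c]$, it is bounded there; taking $a$ to be any positive constant exceeding $\sup_{(0,1/c)} r$ yields $\psi(\lambda) \leq a\,\psi_{G,c}(\lambda)$ on $[0, 1/c)$, which together with the trivial extension to $[1/c, \lambda_{\max})$ completes the sub-gamma case.

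The sub-exponential claim follows by the identical argument, using only that $\psi_{E,c}(\lambda) = \tfrac{1}{2}\lambda^2 + O(\lambda^3)$ near zero (so again $r(\lambda) \to \psi''(0)$) and that $\psi_{E,c}(\lambda) \to +\infty$ as $\lambda \uparrow 1/c$ (so again $r(\lambda) \to 0$). The one point requiring care — and the crux of the argument — is the behavior near $\lambda_{\max}$: a priori $\psi$ may blow up arbitrarily fast there, faster than the simple pole of $\psi_{G,c}$ or the logarithmic blow-up of $\psi_{E,c}$, so a naive attempt to dominate $\psi$ on its entire domain would fail. Choosing $c$ so that $1/c$ lies strictly inside $[0, \lambda_{\max})$ sidesteps this entirely, since the dominating function is already $+\infty$ beyond $1/c$ and no competition with the growth of $\psi$ near $\lambda_{\max}$ is needed.
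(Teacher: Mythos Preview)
Your proposal is correct and takes essentially the same approach as the paper: both arguments choose $c$ so that $1/c < \lambda_{\max}$, making the inequality trivial on $[1/c, \lambda_{\max})$, and then control $\psi/\psi_{G,c}$ on the compact subinterval $[0, 1/c]$ via the Taylor behavior at the origin. The paper's version carries this out slightly more explicitly (bounding $\psi(\lambda) \leq \psi''(0_+)\lambda^2 \leq a\psi_{G,c}(\lambda)$ on a small neighborhood and setting $c$ to the reciprocal of its length), while your compactness argument is a touch cleaner and avoids the minor edge case $\psi''(0_+)=0$.
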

In particular, this means that if $S_t = \sum_{i=1}^t X_i$ for any zero-mean,
i.i.d.\ sequence $(X_i)$ satisfying $\E e^{\lambda X_1} < \infty$ for some
$\lambda > 0$, then $(S_t)$ is sub-gamma and sub-exponential with appropriate
scale constants and variance process $V_t$ proportional to $t$. Furthermore, any
process that is sub-$\psi$ with a CGF-like $\psi$ function is also sub-gamma
and sub-exponential with appropriate scaling of the variance process by a
constant.

\begin{landscape}
\begin{table}
\renewcommand{\arraystretch}{2}
  \centering
  \begin{tabular}{rcccc}
    \toprule
    Name & $\psi(\lambda)$ &
      \makecell{Legendre-Fenchel\\transform $\psi^\star(u)$} &
      \makecell{Slope transform\\$\slope(u)$} &
      \makecell{Decay transform\\$\decay(u)$} \\
    \midrule
    \makecell[tr]{Bernoulli\\$\psi_B,\slope_B,\decay_B$} &
      $\frac{1}{gh} \log\pfrac{g e^{h\lambda} + h e^{-g \lambda}}{g + h}$ &
      $\frac{1}{gh} \KL{\frac{g(1+hu)}{g+h}}{\frac{g}{g+h}}$ &
      $\frac{h\log(1-gu)^{-1} - g\log(1+hu)}
            {gh(\log(1-gu)^{-1} + \log(1+hu))}$ &
      $\geq \frac{2ghu}{\varphi(g,h)}$ \\
    \makecell[tr]{Gaussian/normal\\$\psi_N,\slope_N,\decay_N$} &
      $\lambda^2 / 2$ &
      $u^2 / 2$ &
      $u / 2$ &
      $2u$ \\
    \makecell[tr]{Poisson\\$\psi_P,\slope_P,\decay_P$} &
      $\frac{e^{c\lambda} - c\lambda - 1}{c^2}$ &
      $\frac{(1 + cu) \log(1 + cu) - cu}{c^2}$ &
      $\frac{cu - \log(1+cu)}{c\log(1 + cu)}$ &
      $\geq \frac{2u}{1+2cu/3}$ \\
    \makecell[tr]{``Gamma''\\$\psi_G,\slope_G,\decay_G$} &
      $\frac{\lambda^2}{2(1 - c\lambda)}$ &
      $\frac{u^2}{1 + cu + \sqrt{1 + 2cu}}$ &
      $\frac{u}{1 + \sqrt{1 + 2cu}}$ &
      $\frac{2u}{1 + 2cu}$ \\
    \makecell[tr]{Exponential\\$\psi_E,\slope_E,\decay_E$} &
      $\frac{\log(1-c\lambda)^{-1} - c \lambda}{c^2}$ &
      $\frac{cu - \log(1 + cu)}{c^2}$ &
      $\frac{(1+cu) \log(1+cu) - cu}{c^2 u}$ &
      $\geq \frac{2u}{1+2cu}$ \\
    \bottomrule
  \end{tabular}
  \parbox{1.4\textwidth}{\caption{Summary of common $\psi$ functions and related transforms. KL denotes
    the Bernoulli Kullback-Leibler divergence,
    ${\KL{q}{p} = q \log\pfrac{q}{p} + (1 - q) \log\pfrac{1-q}{1-p}}$. For the
    gamma and exponential cases, the domain of $\psi$ is bounded by
    $\lambda_{\max} = 1 / (c \bmax 0)$; for the other three cases,
    $\lambda_{\max} = \infty$. For the Bernoulli, Poisson, and exponential
    cases, a closed-form expression for $D(u)$ is not available, but we give
    lower bounds based on \cref{th:psi_relations}; $\varphi(g,h)$ is defined in
    \eqref{eq:varphi_defn}. \label{tab:psi_transforms}}}
\end{table}
\end{landscape}

\subsection{Conditions for sub-$\psi$ processes}\label{sec:sub_psi_conds}

\begin{table}
\renewcommand{\arraystretch}{1.5}
\centering
\begin{tabular}{llll}
\toprule
& Condition & $\psi$ & $V_t$ \\
\midrule
\multicolumn{4}{l}{\emph{Discrete time, one-sided}} \\
\qquad Bernoulli II &
  $\Delta S_t \leq h , \E \Delta S_t^2 \leq gh $ & $\psi_B$
  & $ght$ \\
\qquad Bennett & $\Delta S_t \leq c $ & $\psi_P$ & $\eangle{S}_t$ \\
\qquad Bernstein
  & $\mu^k_t \leq \frac{k!}{2} c^{k-2} \mu^2_t$
  & $\psi_G$ & $\eangle{S}_t$ \\
\qquad $^\star$Heavy on left
  & $\E_{t-1} T_a(\Delta S_t) \leq 0,\, \forall a > 0$
  & $\psi_N$ & $[S]_t$  \\
\qquad Bounded below & $\Delta S_t \geq -c $ & $\psi_E$ & $[S]_t$ \\
\multicolumn{4}{l}{\emph{Discrete time, two-sided}} \\
\qquad Parametric & $\Delta S_t \asiid F$ & $\log \E e^{\lambda \Delta S_1}$
  & $t$ \\
\qquad Bernoulli I & $-g  \leq \Delta S_t \leq h $ & $\psi_B$ &
  $ght$ \\
\qquad Hoeffding-KS & $-g_t  \leq \Delta S_t \leq h_t $ & $\psi_N$
  & $\sum_{i=1}^t \varphi(g_i, h_i) $ \\
\qquad $\implies$ Hoeffding I & $-g_t  \leq \Delta S_t \leq h_t $ & $\psi_N$
  & $\sum_{i=1}^t \pfrac{g_i+h_i}{2}^2 $ \\
\qquad $^\star$Symmetric & $\Delta S_t \sim -\Delta S_t \mid \Fcal_{t-1}$
  & $\psi_N$ & $[S]_t$ \\
\qquad SN I & $\E_{t-1} \Delta S_t^2 < \infty$ & $\psi_N$
  & $([S]_t + 2\eangle{S}_t) / 3$ \\
\qquad SN II & $\E_{t-1} \Delta S_t^2 < \infty$ & $\psi_N$
  & $([S_+]_t + \eangle{S_-}_t) / 2$ \\
\qquad Cubic SN & $\E_{t-1} \abs{\Delta S_t}^3 < \infty$ & $\psi_G$
  & $[S]_t + \sum_{i=1}^t \abs{\mu}^3_i$ \\
\multicolumn{4}{l}{\emph{Continuous time, one-sided}} \\
\qquad Bennett & $\Delta S_t \leq c$ & $\psi_P$ & $\eangle{S}_t$ \\
\qquad Bernstein & $W_{m,t} \leq \frac{m!}{2} c^{m-2} V_t$ & $\psi_G$ & $V_t$ \\
\multicolumn{4}{l}{\emph{Continuous time, two-sided}} \\
\qquad L\'evy & $\E e^{\lambda S_1} < \infty$ & $\log \E e^{\lambda S_1}$ 
  & $t$ \\
\qquad Continuous paths & $\Delta S_t \equiv 0$ & $\psi_N$ & $\eangle{S}_t$ \\
\bottomrule
\end{tabular}
\caption{Summary of sufficient conditions for a real-valued, discrete- or
  continuous-time martingale $(S_t)$ to be sub-$\psi$ with the given variance
  process. We use the shorthand $\mu^k_t \defineas \E_{t-1} (\Delta S_t)^k$ and
  $\abs{\mu}^k_t \defineas \E_{t-1} \eabs{\Delta S_t}^k$. In starred cases
  ($^\star$), the first moment $\E_{i-1} \Delta S_i$ need not exist, so $(S_t)$
  need not be a martingale. See
  \cref{th:discrete_facts,th:novel_discrete_conds,th:continuous_facts} for
  details of each case.  ``$\implies$ Hoeffding I'' indicates that the variance
  process $(V_t)$ for Hoeffding-KS is smaller. ``SN'' is short for
  ``self-normalized''. \label{tab:sufficient_conditions_1}}
\end{table}

\begin{table}
\renewcommand{\arraystretch}{1.5}
\centering
\begin{tabular}{llll}
\toprule
& Condition & $\psi$ & $Z_t$ \\
\midrule
\multicolumn{4}{l}{\emph{Discrete time, one-sided}} \\
\qquad Bernoulli II &
  $\Delta Y_t \preceq h I_d, \E \Delta Y_t^2 \preceq gh I_d$ & $\psi_B$
  & $ght I_d$ \\
\qquad Bennett & $\Delta Y_t \preceq c I_d$ & $\psi_P$ & $\eangle{Y}_t$ \\
\qquad Bernstein
  & $\mu^k_t \preceq \frac{k!}{2} c^{k-2} \mu^2_t$
  & $\psi_G$ & $\eangle{Y}_t$ \\
\qquad Bounded below & $\Delta Y_t \succeq -c I_d$ & $\psi_E$ & $[Y]_t$ \\
\multicolumn{4}{l}{\emph{Discrete time, two-sided}} \\
\qquad Bernoulli I & $-g I_d \preceq \Delta Y_t \preceq h I_d$ & $\psi_B$ &
  $ght I_d$ \\
\qquad Hoeffding-KS & $-G_t I_d \preceq \Delta Y_t \preceq H_t I_d$ & $\psi_N$
  & $\sum_{i=1}^t \varphi(G_i, H_i) I_d$ \\
\qquad $\implies$ Hoeffding I & $-G_t I_d \preceq \Delta Y_t \preceq H_t I_d$
  & $\psi_N$ & $\sum_{i=1}^t \pfrac{G_i+H_i}{2}^2 I_d$ \\
\qquad Hoeffding II & $\Delta Y_t^2 \preceq A_t^2$ & $\psi_N$
  & $\sum_{i=1}^t A_i^2$ \\
\qquad $^\star$Symmetric & $\Delta Y_t \sim -\Delta Y_t \mid \Fcal_{t-1}$
  & $\psi_N$ & $[Y]_t$ \\
\qquad SN I & $\E_{t-1} \Delta Y_t^2 < \infty$ & $\psi_N$
  & $([Y]_t + 2\eangle{Y}_t) / 3$ \\
\qquad SN II & $\E_{t-1} \Delta Y_t^2 < \infty$ & $\psi_N$
  & $([Y_+]_t + \eangle{Y_-}_t) / 2$ \\
\qquad Cubic SN & $\E_{t-1} \abs{\Delta Y_t}^3 < \infty$ & $\psi_G$
   & $[Y]_t + \sum_{i=1}^t \abs{\mu}^3_i$ \\
\bottomrule
\end{tabular}
\caption{Summary from \cref{th:discrete_facts,th:novel_discrete_conds} of
  sufficient conditions for an $\Hcal^d$-valued, discrete-time martingale
  $(Y_t)$ to have a sub-$\psi$ maximum eigenvalue process $S_t = \gammamax(Y_t)$
  with variance process $V_t = \gammamax(Z_t)$. We use the shorthand
  $\mu^k_t \defineas \E_{t-1} (\Delta S_t)^k$ and
  $\abs{\mu}^k_t \defineas \E_{t-1} \eabs{\Delta S_t}^k$. In the
  symmetric$^\star$ case, $\E_{i-1} \Delta Y_i$ need not exist, so $(Y_t)$ need
  not be a martingale.  ``$\implies$ Hoeffding I'' indicates that $(V_t)$ for
  Hoeffding-KS is smaller. ``SN'' is short for
  ``self-normalized''. \label{tab:sufficient_conditions_2}}
\end{table}

In \cref{tab:sufficient_conditions_1,tab:sufficient_conditions_2}, we summarize
a variety of standard and novel conditions for a process $(S_t)$ to be
sub-$\psi$. \Cref{th:discrete_facts,th:novel_discrete_conds} contain
discrete-time results, while results for continuous time are in
\cref{th:continuous_facts}. We let $I_d$ denote the $d~\times~d$ identity
matrix. For a process $(Y_t)_{t \in \Tcal}$, $[Y]_t$ denotes the quadratic
variation and $\eangle{Y}_t$ the conditional quadratic variation; in discrete
time, $[Y]_t \defineas \sum_{i=1}^t \Delta Y_i^2$ and
$\eangle{Y}_t \defineas \sum_{i=1}^t \E_{i-1} \Delta Y_i^2$. We extend a
function $f: \R \to \R$ on the real line to an operator
$f : \Hcal^d \to \Hcal^d$ on the space of Hermitian matrices in the standard
way: if $A \in \Hcal^d$ has the spectral decomposition $U \Lambda U^\star$ where
$\Lambda$ is diagonal with elements $\lambda_1, \dots, \lambda_d$, then
$f(A) = U f(\Lambda) U^\star$ where $f(\Lambda)$ is diagonal with elements
$f(\lambda_1), \dots, f(\lambda_d)$. In particular, the absolute value function
extends to $\Hcal^d$ by taking absolute values of the eigenvalues, while
$[Y_+]_t \defineas \sum_{i=1}^t \max(0, \Delta Y_i)^2$ and
$\eangle{Y_-}_t \defineas \sum_{i=1}^t \E_{i-1} \min(0, \Delta Y_i)^2$ operate
by truncating the eigenvalues.

In the discrete-time case, we have the following known results.

\begin{fact}\label{th:discrete_facts}
  Let $(Y_t)_{t \in \N}$ be any $\Hcal^d$-valued martingale, and let
  $S_t \defineas \gammamax(Y_t)$ for $t \in \N$. In all cases we set $l_0 = d$.
\begin{enumerate}[label=(\alph*)]
\item (Scalar parametric) If $d=1$ and $S_t$ is a cumulative sum of i.i.d.,
  real-valued random variables, each of which is mean zero with known CGF
  $\psi(\lambda)$ that is finite on $\lambda \in [0, \lambda_{\max})$, then
  $(S_t)$ is sub-$\psi$ with variance process $V_t = t$.
\item (Bernoulli I) If $-g I_d \preceq \Delta Y_t \preceq h I_d$ a.s.\ for all
  $t \in \N$, then $(S_t)$ is sub-Bernoulli with variance process
  $V_t = ght$ and range parameters $g,h$ \citep{hoeffding_probability_1963,
    tropp_user-friendly_2012}.
\item (Bennett) If $\Delta Y_t \preceq c I_d$ a.s.\ for all $t \in \N$ for some
  $c > 0$, then $(S_t)$ is sub-Poisson with variance process
  $V_t = \gammamax(\eangle{Y}_t)$ and scale parameter $c$
  \citep{bennett_probability_1962,hoeffding_probability_1963,
    tropp_user-friendly_2012}.
\item (Bernstein) If
  $\E_{t-1} (\Delta Y_t)^k \preceq (k!/2) c^{k-2} \E_{t-1}(\Delta Y_t)^2$ for
  all $t \in \N$ and $k = 2, 3, \dots$, then $(S_t)$ is sub-gamma with variance
  process $V_t = \gammamax(\eangle{Y}_t)$ and scale parameter $c$
  \citep{bernstein_theory_1927, tropp_user-friendly_2012,
    boucheron_concentration_2013}.
\item (Heavy on left) Let $T_a(y) \defineas (y \bmin a) \bmax -a$ for $a > 0$
  denote the truncation of $y$. If $d = 1$ and
  \begin{align}
    \E_{t-1} T_a(\Delta Y_t) \leq 0 \quad \text{for all } a > 0, t \in \N,
    \label{eq:heavy_on_left}
  \end{align}
  then $(S_t)$ is sub-Gaussian with variance process $V_t = \gammamax([Y]_t)$. A
  random variable satisfying \eqref{eq:heavy_on_left} is called \emph{heavy on
    left}, and $(Y_t)$ need not be a martingale in this case
  \citep{bercu_exponential_2008, delyon_exponential_2015,
    bercu_concentration_2015}. For example, the centered versions of the
  exponential, gamma, Pareto, log-normal, Poisson ($\lambda \in \N$), Bernoulli
  ($p < 1/2$) and geometric ($0<p<1$) distributions are known to be heavy on
  left. When $-\Delta Y_t$ satisfies \eqref{eq:heavy_on_left} we say
  $\Delta Y_t$ is \emph{heavy on right}.
\end{enumerate}
\end{fact}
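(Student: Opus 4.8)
The plan is to prove all five parts by exhibiting, for each fixed $\lambda$, a single nonnegative supermartingale that dominates the exponential process $\expebrace{\lambda S_t - \psi(\lambda)V_t}$, thereby verifying \cref{th:canonical_assumption}. For the matrix cases (b)--(d) the natural candidate is the trace-exponential process $L_t(\lambda) \defineas \tr\expebrace{\lambda Y_t - \psi(\lambda)Z_t}$, where $Z_t$ is the matrix variance process whose maximum eigenvalue is $V_t$ (namely $Z_t = ght\cdot I_d$ in (b) and $Z_t = \eangle{Y}_t$ in (c),(d)). Three facts then need checking: that $L_t(\lambda)$ dominates the scalar exponential process, that $L_0(\lambda) = d = l_0$, and that $(L_t(\lambda))$ is a supermartingale. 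Part (a) is the degenerate scalar case of this scheme: when $d = 1$ and $\psi$ is the exact CGF of the i.i.d.\ increments, the process $\expebrace{\lambda S_t - \psi(\lambda)t}$ is literally a product of conditionally mean-one factors, hence a martingale with initial value $1 = l_0$, and no matrix machinery is needed.

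The domination and initial-value facts are uniform across (b)--(d). Since $\lambda \ge 0$ and $\psi(\lambda) \ge 0$, Weyl's inequality gives $\lambda\gammamax(Y_t) - \psi(\lambda)\gammamax(Z_t) \le \gammamax(\lambda Y_t - \psi(\lambda)Z_t)$, and exponentiating together with $e^{\gammamax(A)} \le \tr e^{A}$ (valid because $e^{A} \succ 0$) yields $\expebrace{\lambda S_t - \psi(\lambda)V_t} \le L_t(\lambda)$. At $t = 0$ we have $Y_0 = Z_0 = 0$, so $L_0(\lambda) = \tr I_d = d$. The supermartingale property is the crux. Writing $H \defineas \lambda Y_{t-1} - \psi(\lambda)Z_{t-1} - \psi(\lambda)\Delta Z_t$, which is $\Fcal_{t-1}$-measurable because $\Delta Z_t$ is predictable in each of (b)--(d), I would apply the Tropp--Lieb master inequality $\E_{t-1}\tr\expebrace{H + \lambda\Delta Y_t} \le \tr\expebrace{H + \log\E_{t-1}e^{\lambda\Delta Y_t}}$ \citep{tropp_user-friendly_2012}, and then invoke monotonicity of $A \mapsto \tr\expebrace{H + A}$ in the semidefinite order. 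This reduces the supermartingale property to a single \emph{matrix cumulant bound} $\log\E_{t-1}e^{\lambda\Delta Y_t} \preceq \psi(\lambda)\,\Delta Z_t$, after which $\tr\expebrace{H + \psi(\lambda)\Delta Z_t} = L_{t-1}(\lambda)$ closes the step.

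It then remains to establish the matrix cumulant bound in each case, and here the strategy is to lift a scalar convexity or moment estimate to $\Hcal^d$ eigenvalue-by-eigenvalue and apply operator monotonicity of $\log$. For (b), the scalar bound $e^{\lambda x} \le \frac{h - x}{g+h}e^{-g\lambda} + \frac{x + g}{g+h}e^{h\lambda}$ on $x \in [-g,h]$ (convexity) lifts to $\E_{t-1}e^{\lambda\Delta Y_t} \preceq \frac{g e^{h\lambda} + h e^{-g\lambda}}{g+h}I_d = e^{gh\,\psi_B(\lambda)}I_d$ once the martingale property $\E_{t-1}\Delta Y_t = 0$ is used, whence $\Delta Z_t = gh\,I_d$. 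For (c), the increasing function $x \mapsto (e^{\lambda x} - 1 - \lambda x)/x^2$ is bounded by $\psi_{P,c}(\lambda)$ on $x \le c$, giving $e^{\lambda A} \preceq I_d + \lambda A + \psi_{P,c}(\lambda)A^2$ and thus $\E_{t-1}e^{\lambda\Delta Y_t} \preceq \expebrace{\psi_{P,c}(\lambda)\E_{t-1}\Delta Y_t^2}$ with $\Delta Z_t = \E_{t-1}\Delta Y_t^2 = \Delta\eangle{Y}_t$. Part (d) is identical in structure, with the Bernstein moment hypothesis $\E_{t-1}(\Delta Y_t)^k \preceq \frac{k!}{2}c^{k-2}\E_{t-1}(\Delta Y_t)^2$ summed in the Taylor expansion of $e^{\lambda A}$ to produce the sub-gamma exponent $\psi_{G,c}(\lambda)$, again with $\Delta Z_t = \Delta\eangle{Y}_t$.

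The heavy-on-left case (e) stands apart and is where I expect the main difficulty. Here $d = 1$, so no matrix tools are needed, but the variance process $V_t = [Y]_t = \sum_{i \le t}\Delta Y_i^2$ is \emph{adapted rather than predictable}, so the master-inequality route is unavailable; instead I would argue pathwise, factoring $L_t(\lambda) = \expebrace{\lambda S_t - \tfrac{\lambda^2}{2}[Y]_t} = \prod_{i \le t}\expebrace{\lambda\Delta Y_i - \tfrac{\lambda^2}{2}\Delta Y_i^2}$ and showing each factor is conditionally mean $\le 1$. The whole content is then the scalar inequality $\E_{t-1}\expebrace{\lambda\Delta Y_t - \tfrac{\lambda^2}{2}\Delta Y_t^2} \le 1$ for a conditionally heavy-on-left increment, which follows by bounding $\expebrace{\lambda x - \tfrac{\lambda^2}{2}x^2}$ above by an affine-plus-truncation majorant whose conditional expectation is nonpositive under $\E_{t-1}T_a(\Delta Y_t) \le 0$; I would take this pointwise inequality and the resulting supermartingale from \citet{bercu_exponential_2008, delyon_exponential_2009}. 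The two places I expect to absorb the real work are (i) justifying the operator-theoretic steps---Weyl's inequality, the Tropp--Lieb master inequality, operator monotonicity of $\log$ and of $\tr\exp$---cleanly enough that they apply verbatim across (b)--(d), and (ii) pinning down the correct pointwise majorant of $\expebrace{\lambda x - \tfrac{\lambda^2}{2}x^2}$ that converts the truncation hypothesis into the mean-one bound in (e).
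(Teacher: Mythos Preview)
Your proposal is correct and follows essentially the same route as the paper's framework. The paper presents \cref{th:discrete_facts} as known results with citations and does not give a self-contained proof, but its general machinery (\cref{th:sub_psi_lemma} and its proof) is exactly the trace-exponential supermartingale scheme you describe: Lieb's concavity plus Jensen to get the supermartingale step, then a matrix cumulant bound $\log\E_{t-1}e^{\lambda\Delta Y_t}\preceq\psi(\lambda)\Delta W_t$ established case-by-case via the transfer rule, and finally the domination $\expebrace{\lambda S_t-\psi(\lambda)V_t}\leq L_t(\lambda)$.

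One small difference worth noting: for the domination step you invoke Weyl's inequality directly, $\lambda\gammamax(Y_t)-\psi(\lambda)\gammamax(Z_t)\leq\gammamax(\lambda Y_t-\psi(\lambda)Z_t)$, whereas the paper's proof of \cref{th:sub_psi_lemma} takes a two-step path---first replacing $Z_t$ by $\gammamax(Z_t)I_d$ inside the trace exponential via monotonicity, then using $\tr e^A\geq e^{\gammamax(A)}$ and $\gammamax(A-cI_d)=\gammamax(A)-c$. Both are valid; your Weyl route is slightly more compact. Your treatment of part (e) is also on target: the paper's framework accommodates it as the $d=1$, $U_t=[Y]_t$, $W_t\equiv 0$ instance of the same scheme, with the scalar supermartingale inequality imported from the cited heavy-on-left literature.
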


In addition to the above known results, we provide the following extensions of
known scalar results to matrices. 

\begin{lemma} \label{th:novel_discrete_conds}
Let $(Y_t)_{t \in \N}$ be any $\Hcal^d$-valued martingale, and let
  $S_t \defineas \gammamax(Y_t)$ for $t \in \N$. In all cases we set $l_0 = d$.
\begin{enumerate}[label=(\alph*)]
\item (Bernoulli II) If, for all $t \in \N$, $\Delta Y_t \preceq h I_d$ a.s.\
  and $\E \Delta Y_t^2 \preceq gh I_d$, then $(S_t)$ is sub-Bernoulli with
  variance process $V_t = ght$.
\item (Hoeffding-KS) If $-G_t I_d \preceq \Delta Y_t \preceq H_t I_d$ a.s.\ for
  all $t \in \N$ for some real-valued, predictable sequences $(G_t)$ and
  $(H_t)$, then $(S_t)$ is sub-Gaussian with variance process
  ${V_t = \sum_{i=1}^t \varphi(G_i, H_i)}$, where
  \begin{align}
    \varphi(g,h) \defineas \begin{cases}
      \frac{h^2 - g^2}{2 \log(h/g)}, & g < h \\
      gh, & g \geq h.
    \end{cases} \label{eq:varphi_defn}
  \end{align}
\item (Hoeffding I) If $-G_t I_d \preceq \Delta Y_t \preceq H_t I_d$ a.s.\ for
  all $t \in \N$ for some real-valued, predictable sequences $(G_t)$ and
  $(H_t)$, then $(S_t)$ is sub-Gaussian with variance process
  ${V_t = \sum_{i=1}^t (G_i + H_i)^2/4}$.
\item (Conditionally symmetric) If $\Delta Y_t \sim -\Delta Y_t ~|~ \Fcal_{t-1}$
  for all $t \in \N$, then $(S_t)$ is sub-Gaussian with variance process
  $V_t = \gammamax([Y]_t)$. Here, $\Delta Y_t$ need not be integrable, so
  $(Y_t)$ need not be a martingale.
\item (Bounded from below) If $\Delta Y_t \succeq -c I_d$ a.s.\ for all
  $t \in \N$ for some $c > 0$, then $(S_t)$ is sub-exponential with variance
  process $V_t = \gammamax([Y]_t)$ and scale parameter $c$.
\item (General self-normalized I) If $\E_{t-1} \Delta Y_t^2$ is finite for all
  $t \in \N$, then $(S_t)$ is sub-Gaussian with variance process
  $V_t = \gammamax([Y]_t + 2 \eangle{Y}_t) / 3$.
\item (General self-normalized II) If $\E_{t-1} \Delta Y_t^2$ is finite for all
  $t \in \N$, then $(S_t)$ is sub-Gaussian with variance process
  $V_t = \gammamax([Y_+]_t + \eangle{Y_-}_t) / 2$.
\item (Hoeffding II) If $\Delta Y_t^2 \preceq A_t^2$ a.s.\ for all $t \in \N$
  for some $\Hcal^d$-valued predictable sequence $(A_t)$, then $(S_t)$ is
  sub-Gaussian with variance process $V_t = \gamma_{\max}(\sum_{i=1}^t A_i^2)$.
\item (Cubic self-normalized) If $\E_{t-1} \eabs{\Delta Y_t}^3$ is finite for
  all $t \in \N$, then $(S_t)$ is sub-gamma with variance process
  $V_t = \gammamax\eparen{[Y]_t + \sum_{i=1}^t \E_{i-1} \eabs{\Delta Y_i}^3}$
  and scale parameter $c = 1/6$.
\end{enumerate}
\end{lemma}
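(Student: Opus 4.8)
The plan is to exhibit, for each part and each $\lambda \in [0,\lambda_{\max})$, the Tropp-style trace-exponential process
$L_t(\lambda) \defineas \tr\expebrace{\lambda Y_t - \psi(\lambda) W_t}$
as the dominating supermartingale demanded by \cref{th:canonical_assumption}, where $W_t$ is the $\Hcal^d$-valued ``matrix variance process'' whose maximum eigenvalue is the stated scalar $V_t$ (thus $W_t = ght\,I_d$ in (a), $W_t = [Y]_t$ in (d,e), $W_t = \tfrac13([Y]_t + 2\eangle{Y}_t)$ in (f), and so on). Two observations make this the right object. First, $L_0(\lambda) = \tr I_d = d = l_0$, and since $\Delta W_t \succeq 0$ in every case, $W_t$ is nondecreasing in the semidefinite order, so $V_t = \gammamax(W_t)$ is a legitimate nondecreasing variance process with $V_0 = 0$. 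Second, $L_t(\lambda)$ dominates the exponential process pointwise: because $\lambda,\psi(\lambda)\ge 0$ and, for a top eigenvector $u$ of $Y_t$, $u^\ast W_t u \le \gammamax(W_t)$, we get $\lambda S_t - \psi(\lambda) V_t \le \gammamax\eparen{\lambda Y_t - \psi(\lambda) W_t}$, whence exponentiating and using $\expebrace{\gammamax(M)} = \gammamax(e^M) \le \tr e^M$ for Hermitian $M$ yields $\expebrace{\lambda S_t - \psi(\lambda) V_t} \le L_t(\lambda)$. It therefore remains only to show each $L_t(\lambda)$ is a supermartingale.

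For the supermartingale step I would invoke Tropp's trace-exponential technique built on Lieb's concavity theorem. Splitting $\Delta W_t = \Delta W_t^{\mathrm{pred}} + \Delta W_t^{\mathrm{adap}}$ into its $\Fcal_{t-1}$-measurable and residual parts, set $H = \lambda Y_{t-1} - \psi(\lambda) W_{t-1} - \psi(\lambda)\Delta W_t^{\mathrm{pred}}$ and $X = \lambda\Delta Y_t - \psi(\lambda)\Delta W_t^{\mathrm{adap}}$. Lieb's inequality $\E_{t-1}\tr\expebrace{H+X} \le \tr\expebrace{H + \log\E_{t-1}e^X}$, together with monotonicity of $A\mapsto \tr e^A$ under $\preceq$ and operator monotonicity of $\log$, reduces the supermartingale property $\E_{t-1}L_t(\lambda)\le L_{t-1}(\lambda)$ to the one-step matrix cumulant bound
\[
\E_{t-1}\expebrace{\lambda\Delta Y_t - \psi(\lambda)\Delta W_t^{\mathrm{adap}}}
  \preceq \expebrace{\psi(\lambda)\Delta W_t^{\mathrm{pred}}}.
\]
This single inequality is what must be checked in each of (a)--(i), and it is where the distributional hypotheses enter.

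I would then dispatch the nine cases in two groups. For the predictable-variance cases (a) Bernoulli II, (b,c) Hoeffding, and (h) Hoeffding II, we have $\Delta W_t^{\mathrm{adap}} = 0$, so the target is $\E_{t-1} e^{\lambda\Delta Y_t} \preceq \expebrace{\psi(\lambda)\Delta W_t^{\mathrm{pred}}}$. Here I would lift a scalar inequality of the form $e^{\lambda x} \le \alpha + \beta x + \gamma x^2$ (with $\gamma \ge 0$) valid on the eigenvalue range of $\Delta Y_t$ (namely $x \le h$ for (a), $x\in[-G_t,H_t]$ for (b,c), $|x|\le\|A_t\|$ for (h)) to the semidefinite inequality $e^{\lambda\Delta Y_t} \preceq \alpha I_d + \beta\Delta Y_t + \gamma\Delta Y_t^2$, take $\E_{t-1}$, kill the linear term via $\E_{t-1}\Delta Y_t = 0$, and control the quadratic moment using the hypothesis ($\E\Delta Y_t^2\preceq gh I_d$; $\Delta Y_t^2\preceq A_t^2$); the resulting scalar constant is exactly $\expebrace{\psi(\lambda)\cdot(\cdot)}$ for the claimed $\psi\in\{\psi_B,\psi_N\}$ and $\varphi$, as in \cref{th:discrete_facts}. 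For the adapted (self-normalized) cases (d) symmetric, (e) bounded-below, (f,g) SN, and (i) cubic SN, the crucial simplification is that $\Delta W_t^{\mathrm{adap}}$ is a function of $\Delta Y_t$ alone (a multiple of $\Delta Y_t^2$, or a $[Y_\pm]$-type truncation), hence commutes with $\Delta Y_t$, so the left side is the expectation of a single matrix function of $\Delta Y_t$. I would lift a scalar pointwise bound of the shape $e^{\lambda x - \psi(\lambda)c_{\mathrm{a}} x^2} \le 1 + \lambda x + c_{\mathrm{p}} x^2$ spectrally, take $\E_{t-1}$, drop the linear term, and finish with $I_d + c_{\mathrm{p}}\E_{t-1}\Delta Y_t^2 \preceq \expebrace{\psi(\lambda)\Delta W_t^{\mathrm{pred}}}$ via $I_d + M \preceq e^M$; the symmetric case (d) instead uses $\cosh(\lambda x)\,e^{-\lambda^2 x^2/2} \le 1$ after symmetrizing away the odd powers, and so needs no integrability.

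The main obstacle is the family of scalar pointwise inequalities underlying the adapted cases (f), (g), and (i): these are the genuinely delicate technical lemmas (of the type developed by Bercu--Touati, Delyon, and Fan), and they must be calibrated so that the nonnegative coefficient $c_{\mathrm{p}}$ multiplies \emph{exactly} the predictable quadratic-variation term appearing in the stated $V_t$ (for (f), one checks $c_{\mathrm{p}}\E_{t-1}\Delta Y_t^2 = \psi_N(\lambda)\Delta W_t^{\mathrm{pred}}$ on the nose). A secondary subtlety is non-commutativity in the mixed cases, where $V_t$ blends $[Y]_t$ (adapted) with $\eangle{Y}_t$ (predictable): this is precisely why I route the predictable piece $\E_{t-1}\Delta Y_t^2$ through $H$ rather than $X$, so that the commuting scalar calculus handles the adapted piece while Lieb's lemma absorbs the predictable piece without ever requiring $\Delta Y_t$ and $\E_{t-1}\Delta Y_t^2$ to commute.
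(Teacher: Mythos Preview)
Your proposal is correct and follows essentially the same route as the paper. Your split $\Delta W_t = \Delta W_t^{\mathrm{pred}} + \Delta W_t^{\mathrm{adap}}$ is exactly the paper's decomposition into the predictable piece $W_t$ and the adapted piece $U_t = \sum_i u_i(\Delta Y_i)$ of its \cref{th:sub_psi_lemma}; your one-step target $\E_{t-1}\expebrace{\lambda\Delta Y_t - \psi(\lambda)\Delta W_t^{\mathrm{adap}}} \preceq \expebrace{\psi(\lambda)\Delta W_t^{\mathrm{pred}}}$ is precisely the stronger form \eqref{eq:stronger_sub_psi} that the paper verifies in each case via the transfer rule and the same scalar inequalities (Bennett's quadratic for (a), Hoeffding's chord for (b,c), $\cosh x \le e^{x^2/2}$ for (d), Fan's $e^{\lambda x - [\log(1-\lambda)^{-1}-\lambda]x^2}\le 1+\lambda x$ for (e), Delyon's $e^{x-x^2/6}\le 1+x+x^2/3$ and $e^{x-x_+^2/2}\le 1+x+x_-^2/2$ for (f,g), and $e^{x-x^2/2}\le 1+x+x_-^3/3$ for (i)). The one organizational difference is part~(h): you place it among the direct predictable-variance cases, whereas the paper obtains it as a corollary of (f) by observing $\tfrac13[Y]_t+\tfrac23\eangle{Y}_t \preceq \sum_i A_i^2$ and enlarging the variance process. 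Your direct route also works but requires a one-step matrix Hoeffding lemma giving $\log\E_{t-1}e^{\lambda\Delta Y_t}\preceq \tfrac{\lambda^2}{2}A_t^2$ rather than merely a scalar multiple of $I_d$; the paper's indirect route sidesteps that extra ingredient.
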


The proof of the above lemma can be found in
\cref{sec:proof_novel_discrete_conds}. Case (a) is a straightforward extension
of Bennett's condition for upper-bounded random variables with bounded variance
to matrices with upper-bounded eigenvalues and bounded matrix variance
\citep[p. 42]{bennett_probability_1962}. Cases (b) and (c) are similar
extensions of Hoeffding's sub-Gaussian conditions for bounded random variables
to matrices with bounded eigenvalues (\citealp[Theorems 1 and
2]{hoeffding_probability_1963}; \citealp{kearns_large_1998}; \citealp[Theorem
2.49]{bercu_concentration_2015}). In the conditionally symmetric case (d), we
can achieve control without any moment or boundedness assumptions by defining
$V_t$ in terms of observed rather than expected squared deviations; this is
known for $d=1$ (\citealp{de_la_pena_general_1999}, Lemma 6.1;
\citealp{bercu_concentration_2015}), allowing exponential concentration for
distributions like Cauchy. In the lower-bounded increments case (e), we have a
self-normalized complement to the Bennett-style bound, a result known for $d=1$
\citep[Lemma 4.1]{fan_exponential_2015}. For the square-integrable martingale
cases (f, g), we achieve control for a broad class of processes by incorporating
the conditional variance and the observed squared deviations, as known for $d=1$
(\citealp{delyon_exponential_2009}, Theorem 4;
\citealp{bercu_concentration_2015}). The Hoeffding-like case (h) follows from
the self-normalized bounds, highlighting a connection implicit in the proof of
Corollary 4.2 of \citet{mackey_matrix_2014}. The third moment bound (i) is
similar to a fixed-sample bound given by \citet[Corollary
2.2]{fan_exponential_2015}.

In the continuous-time, scalar case we have the following sufficient conditions
for a local martingale $(S_t)$ to be sub-$\psi$. Here we always assume $(S_t)$
is c\`adl\`ag, $\Delta S_t \defineas S_t - S_{t-}$ denotes the jumps of $S$,
$[S]_t$ denotes the quadratic variation, and $\eangle{S}_t$ is the conditional
quadratic variation, the compensator of $[S]_t$.

\begin{fact}\label{th:continuous_facts}
Here $\Tcal = (0, \infty)$ and $d = 1$, and we set $l_0 = 1$.
\begin{enumerate}[label=(\alph*)]
\item (L\'evy process) If $(S_t)$ is a L\'evy process which is a martingale with
  the CGF $\psi(\lambda) = \log \E e^{\lambda S_1} < \infty$ for all
  $\lambda \in [0, \lambda_{\max})$, then $(S_t)$ is sub-$\psi$ with variance
  process $V_t = t$. See, e.g., \citet[Proposition
  10.2]{papapantoleon_introduction_2008}.
\item (Continuous Bennett) If $(S_t)$ is a local martingale with
  $\Delta S_t \leq c$ for all $t$ a.s., then $(S_t)$ is sub-Poisson with scale
  parameter $c$ and variance process $V_t = \eangle{S}_t$
  \citep[p.\ 157]{lepingle_sur_1978}.
\item (Continuous Bernstein) Suppose $(S_t)$ is a locally square integrable
  martingale: let $W_{2,t} = \eangle{S}_t$, and for $m = 3, 4, \dots$ let
  $W_{m,t}$ be the compensator of the process
  $\sum_{u \leq t} \abs{\Delta S_u}^m$. If, for some $c > 0$ and predictably
  measurable, c\`adl\`ag, nondecreasing process $(V_t)$, it holds that
  $W_{m,t} \leq \frac{m!}{2} c^{m-2} V_t$ for all $m \geq 2$, then $(S_t)$ is
  sub-gamma with scale parameter $c$ and variance process $V_t$
  \citep[implicit in the proof of Lemma 2.2]{van_de_geer_exponential_1995}.
\item (Continuous paths) If $(S_t)$ is a local martingale with a.s.\ continuous
  paths, then $(S_t)$ is sub-Gaussian with variance process
  $V_t = \eangle{S}_t$. This may be seen as a special case of (c), or a limiting
  case of (b).
\end{enumerate}
\end{fact}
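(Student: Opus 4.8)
The plan is to handle all four cases through a single device. For each fixed $\lambda$ in the relevant domain I set $M_t(\lambda) \defineas \expebrace{\lambda S_t - \psi(\lambda) V_t}$ and show that $(M_t(\lambda))_{t \in \Tcal \union \brace{0}}$ is a nonnegative supermartingale with $M_0(\lambda) = 1$. Taking $L_t(\lambda) = M_t(\lambda)$ then verifies \cref{th:canonical_assumption} with $l_0 = 1$, the defining inequality holding with equality. Since $M_t(\lambda) \ge 0$, it suffices to show it is a \emph{local} supermartingale: a nonnegative local supermartingale is a genuine supermartingale by Fatou's lemma applied along a localizing sequence.

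The workhorse is the exponential formula (It\^o's formula for the exponential of a c\`adl\`ag semimartingale). Applied to $X_t = \lambda S_t$ it decomposes $e^{\lambda S_t}$ into $1$, a stochastic integral against the local martingale $S$ (hence itself a local martingale), and a predictable nondecreasing ``drift'' built from a continuous piece $\frac{\lambda^2}{2} \int_0^t e^{\lambda S_{u-}} \d\eangle{S^c}_u$ (where $S^c$ is the continuous local-martingale part) and the compensator of the jump sum $\sum_{u \le t} e^{\lambda S_{u-}}\eparen{e^{\lambda \Delta S_u} - 1 - \lambda \Delta S_u}$. Multiplying by the finite-variation factor $e^{-\psi(\lambda) V_t}$ shows that $(M_t(\lambda))$ is a local supermartingale precisely when the total drift of $e^{\lambda S_t}$ is dominated by $\psi(\lambda) e^{\lambda S_{u-}} \,\d V_u$; so in every case the problem reduces to one pathwise domination of predictable finite-variation processes. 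Two cases are then immediate. For (d), continuity kills the jump sum and $\eangle{S^c}_t = \eangle{S}_t$, so the drift is exactly $\frac{\lambda^2}{2} e^{\lambda S_{u-}} \d\eangle{S}_u = \psi_N(\lambda) e^{\lambda S_{u-}}\,\d V_u$ and $M_t(\lambda)$ is the nonnegative Dol\'eans-Dade exponential, a local martingale; this is also the $c \downarrow 0$ limit of (b) and (c). For (a), I would instead invoke the classical exponential martingale for L\'evy processes: stationary independent increments give $\E e^{\lambda S_t} = e^{t\psi(\lambda)}$ with $\psi(\lambda) = \log \E e^{\lambda S_1}$, so $M_t(\lambda) = \expebrace{\lambda S_t - \psi(\lambda) t}$ is a true martingale with $V_t = t$ \citep[Prop.\ 10.2]{papapantoleon_introduction_2008}.

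The substance lies in (b) and (c), where the jump compensator must be controlled. For (b), with $\Delta S_u \le c$, the key is the elementary inequality $e^{\lambda x} - 1 - \lambda x \le \psi_{P,c}(\lambda)\, x^2$, valid for all $x \le c$ and $\lambda \ge 0$ because $y \mapsto (e^y - 1 - y)/y^2$ is nondecreasing. Applying this to each jump and using $\psi_{P,c}(\lambda) \ge \lambda^2/2$ to absorb the continuous piece, the drift is bounded by $\psi_{P,c}(\lambda) e^{\lambda S_{u-}}$ times the compensator of $\d[S]_u$, namely $\d\eangle{S}_u$; choosing $V_t = \eangle{S}_t$ makes the total drift nonpositive. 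For (c) I would expand $e^{\lambda x} - 1 - \lambda x = \sum_{m \ge 2} \lambda^m x^m / m!$, bound $x^m \le \abs{x}^m$ for $\lambda \ge 0$, and pass the compensator through the sum so that the $m$th term contributes $\frac{\lambda^m}{m!}\,\d W_{m,u}$ (with $W_{2,t} = \eangle{S}_t$ absorbing both the continuous and jump second-order parts). The Bernstein condition $W_{m,t} \le \frac{m!}{2} c^{m-2} V_t$ then yields the geometric-series bound $\frac{\lambda^2}{2}\sum_{m\ge 0}(\lambda c)^{m}\,\d V_u = \psi_{G,c}(\lambda)\,\d V_u$ for $\lambda < 1/c$, exactly the needed domination.

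The main obstacle will be the semimartingale bookkeeping in (b) and (c): justifying the exponential formula for a merely locally square-integrable martingale, interchanging the predictable compensator with the infinite series in (c), and arranging the localization so that the resulting nonnegative local supermartingale is genuinely a supermartingale. These are precisely the steps where \citet{lepingle_sur_1978} and \citet{van_de_geer_exponential_1995} do the real work, and I would follow their constructions closely rather than reprove the general compensation machinery.
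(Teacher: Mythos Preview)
The paper does not supply its own proof of this statement: it is presented as a \emph{Fact} with citations to \citet{papapantoleon_introduction_2008}, \citet{lepingle_sur_1978}, and \citet{van_de_geer_exponential_1995}, and no argument appears in \cref{sec:proofs}. Your proposal therefore goes beyond what the paper offers, giving a unified sketch that is essentially the argument found in those cited sources.

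Your approach is sound. The reduction to showing that $M_t(\lambda) = \expebrace{\lambda S_t - \psi(\lambda) V_t}$ is a nonnegative local supermartingale, hence a true supermartingale by Fatou, is exactly the right framing, and the case-by-case drift comparisons are correct: the inequality $e^{\lambda x} - 1 - \lambda x \le \psi_{P,c}(\lambda)\,x^2$ for $x \le c$ (via monotonicity of $y \mapsto (e^y - 1 - y)/y^2$) is the crux of (b), and the termwise Bernstein bound summing to $\psi_{G,c}(\lambda)$ is the crux of (c). You are also right that the genuine work lies in the compensation and localization bookkeeping, and your decision to defer those details to Lepingle and van~de~Geer matches exactly what the paper does by citing them. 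In short, your proposal is correct and is precisely the content the paper outsources to its references.
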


\subsection{Implications between sub-$\psi$ conditions}

In many settings, a process $(S_t)$ may satisfy \cref{th:canonical_assumption}
with several different choices of $\psi$ and $(V_t)$. Choosing a smaller $\psi$
function will lead to tighter bounds in \cref{th:uniform_chernoff}, but in some
cases one may opt for a larger $\psi$ function to achieve analytical or
computational convenience. It is clear that making $\psi$ uniformly larger
retains the sub-$\psi$ property, since the exponential process
$\expebrace{\lambda S_t - \psi(\lambda) V_t}$ can only become smaller. It is
therefore useful to characterize relationships among the above sub-$\psi$
conditions, so that, after invoking one of the sufficient conditions given in
\cref{sec:sub_psi_conds}, one may invoke \cref{th:uniform_chernoff} with a
different, more convenient $\psi$ function.

\begin{figure}[h!]
  \centering
  \begin{tikzpicture}[node distance = 4.5cm, auto]
    \node[block2] (ber) {Sub-Bernoulli};
    \node[block2, below of=ber, node distance=1.4cm] (norm) {Sub-Gaussian};
    \node[block2, right of=ber] (poi) {Sub-Poisson};
    \node[block2, right of=poi] (gam) {Sub-gamma};
    \node[block2, below of=gam, node distance=1.4cm] (exp)
      {Sub-exponential};
    \path [line] (ber) -- (norm);
    \path [line] (ber.10) -- (poi.170);
    \path [line] (poi.190) -- node[below] {$c < 0$} (ber.350);
    \path [line] (norm.5) to[out=0, in=270] (poi.220);
    \path [line] (poi.270) to[out=270, in=0] node[below right] {$c < 0$}
      (norm.350);
    \path [line] (poi.10) -- (gam.170);
    \path [line] (gam.190) -- node[below] {$c < 0$} (poi.350);
    \path [line] (gam.300) -- (exp.60);
    \path [line] (exp.120) -- (gam.240);
  \end{tikzpicture}
  \caption{Each arrow indicates that any process satisfying the source
    sub-$\psi$ condition, subject to a restriction on the scale parameter $c$,
    also satisfies the destination sub-$\psi$ condition with appropriately
    scaled variance process. See \cref{tab:psi_relations} and
    \cref{th:psi_relations} for details. \label{fig:psi_relations}}
\end{figure}
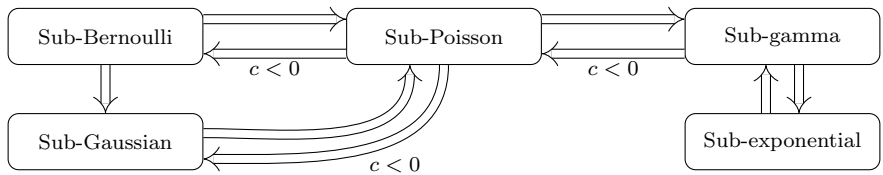

\begin{table}[h!]
\centering
\renewcommand{\arraystretch}{1.1}
\begin{tabular}{lllll}
\toprule
& $\psi_1$ & $\psi_2$ & $a$ & Restriction \\
\midrule
(1) & $\psi_{B,g,h}$ & $\psi_N$ & $\frac{\varphi(g, h)}{gh}$ & \\
(2) & $\psi_{B,g,h}$ & $\psi_N$ & $\frac{(g+h)^2}{4gh}$ & \\
(3) & $\psi_{B,g,h}$ & $\psi_{P,h-g}$ & 1 & \\
(4) & $\psi_{N}$ & $\psi_{P,0}$ & 1 & \\
(5) & $\psi_{P,c}$ & $\psi_{G,c/3}$ & 1 & \\
(6) & $\psi_{G,c}$ & $\psi_{E,3c/2}$ & 1 & \\
(7) & $\psi_{E,c}$ & $\psi_{G,c}$ & 1 & $c \geq 0$ \\
(8) & $\psi_{E,c}$ & $\psi_{G,c/2}$ & 1 & $c < 0$ \\
(9) & $\psi_{G,c}$ & $\psi_{P,2c}$ & 1 & $c < 0$ \\
(10) & $\psi_{P,c}$ & $\psi_N$ & 1 & $c < 0$ \\
(11) & $\psi_{P,c}$ & $\psi_{B,-c,h}$ & 1 & $c < 0$, any $h > 0$ \\
\bottomrule
\end{tabular}
\caption{If $(S_t)$ is sub-$\psi_1$ with variance process $(V_t)$,
  subject to the given restriction, then $(S_t)$ is also sub-$\psi_2$ with
  variance process $(aV_t)$. $\varphi(g,h)$ is defined in
  \eqref{eq:varphi_defn}. See \cref{th:psi_relations} for
  details. \label{tab:psi_relations}}
\end{table}

Note that $\psi_G$, $\psi_P$ and $\psi_E$ are nondecreasing in
  $c$ for all values of $\lambda \geq 0$, so that if a process is sub-$\psi$
  with scale $c$ for any of these $\psi$ functions, then it is sub-$\psi$ for
  any scale $c' > c$ as well. Similarly, $\psi_B$ is nonincreasing in $g$ and
  nondecreasing in $h$. \Cref{tab:psi_relations} and \cref{th:psi_relations}
  fully characterize all implications among sub-$\psi$ conditions, as
  illustrated in \cref{fig:psi_relations}. These follow from inequalities of the
  form $\psi_1 \leq a \psi_2$, some of which are based on standard arguments; see \cref{sec:proof_psi_relations}.

\begin{proposition}\label{th:psi_relations}
  For each row in \cref{tab:psi_relations}, if $(S_t)$ is sub-$\psi_1$ with
  variance process $(V_t)$, and the given restrictions are satisfied, then
  $(S_t)$ is also sub-$\psi_2$ with variance process $(aV_t)$. Furthermore, when
  we allow only scaling of $V_t$ by a constant, these capture all possible
  implications among the five sub-$\psi$ conditions defined above, and the given
  constants are the best possible (in the case of row (2), the constant
  $(g+h)^2 / 4gh$ is the best possible of the form $k/gh$ where $k$ depends only
  on the total range $g + h$).
\end{proposition}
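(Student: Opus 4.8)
The plan is to reduce every implication to a pointwise bound between the two $\psi$ functions, prove those scalar inequalities, and then separately establish tightness of each constant and the absence of any further implications.

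First I would record the mechanism behind each row, which is already half-stated in the text preceding the proposition: if $\psi_1(\lambda) \le a\,\psi_2(\lambda)$ for every $\lambda$ in the domain $[0,\lambda_{\max}^{(2)})$ of $\psi_2$, and if moreover $[0,\lambda_{\max}^{(2)}) \subseteq [0,\lambda_{\max}^{(1)})$, then being sub-$\psi_1$ with variance process $(V_t)$ implies being sub-$\psi_2$ with variance process $(aV_t)$. Indeed, for each such $\lambda$ the supermartingale $L_t(\lambda)$ witnessing the sub-$\psi_1$ property also witnesses the sub-$\psi_2$ one, since $V_t \ge 0$ gives $\exp\{\lambda S_t - \psi_2(\lambda)(aV_t)\} \le \exp\{\lambda S_t - \psi_1(\lambda)V_t\} \le L_t(\lambda)$, with $L_0(\lambda)\le l_0$ inherited and $(aV_t)$ again nonnegative and nondecreasing. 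I would begin by verifying the domain inclusion for all eleven rows (using $\lambda_{\max} = 1/(c\vee 0)$ for the gamma and exponential cases together with the scale changes $c/3$, $3c/2$, $2c$, and so on), so that the reduction is legitimate throughout.

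Second, I would prove the eleven inequalities $\psi_1 \le a\,\psi_2$. Most collapse to a one-variable statement after substituting $x=c\lambda$ (or comparing power series at the origin), and can be dispatched by differentiation together with convexity and monotonicity: row (4) is the definitional identity $\psi_{P,0}=\psi_N$; rows (5), (7), (10) are the standard comparisons $e^x-x-1 \le \frac{x^2}{2(1-x/3)}$, $-\log(1-x)-x \le \frac{x^2}{2(1-x)}$, and $e^x-x-1\le \frac{x^2}{2}$ for $x\le 0$; row (2) is Hoeffding's lemma $gh\,\psi_{B,g,h}(\lambda)=\log\frac{ge^{h\lambda}+he^{-g\lambda}}{g+h}\le \frac{(g+h)^2}{8}\lambda^2$; and rows (3), (6), (8), (9), (11) are similar elementary bounds, with the $c<0$ rows requiring attention to signs. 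The genuinely hard inequality is row (1): $\psi_{B,g,h}(\lambda)\le \frac{\varphi(g,h)}{gh}\cdot\frac{\lambda^2}{2}$ is exactly the Kearns--Saul bound identifying $\varphi(g,h)$ from \eqref{eq:varphi_defn} as an admissible sub-Gaussian variance proxy for the two-point mean-zero law taking values $-g$ and $h$. This is the analytically deepest single step, and I would either invoke \citet{kearns_large_1998} or carry out the corresponding extremal computation.

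Third, for best-possibility I would split according to whether $a=1$. In every row with $a=1$ both functions satisfy $\psi(0)=\psi'(0_+)=0$ and $\psi''(0)=1$, so $\psi_1(\lambda)/\psi_2(\lambda)\to 1$ as $\lambda\downarrow 0$ and no smaller constant can work; the particular scale choices (e.g.\ $c/3$ in row (5)) are precisely what make the third-order terms agree, so that $a=1$ is simultaneously valid and sharp. For row (1), sharpness is the companion half of Kearns--Saul, namely $\sup_{\lambda>0}\psi_{B,g,h}(\lambda)/\psi_N(\lambda)=\varphi(g,h)/(gh)$, with the supremum attained at an interior $\lambda$ rather than at $0$ or $\infty$. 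For row (2) the optimality is the restricted one in the statement: among constants of the form $k/(gh)$ with $k=k(g+h)$, one needs $k \ge gh\cdot\sup_{\lambda}\bigl(\psi_{B,g,h}/\psi_N\bigr)=\varphi(g,h)$ for every admissible $g,h$, and since $\varphi(g,h)$ is maximized over $g+h=s$ at $g=h=s/2$ with value $s^2/4$, the Hoeffding constant $k=(g+h)^2/4$ is forced. Finally, for the completeness claim that these rows capture all implications, I would show that for every ordered pair of the five conditions not lying in the transitive closure of \cref{fig:psi_relations} there is no finite $a$ with $\psi_1\le a\,\psi_2$ on the common domain, i.e.\ $\sup_\lambda \psi_1/\psi_2=\infty$; two obstructions suffice, a \emph{pole} obstruction (a finite-domain $\psi_{G,c}$ or $\psi_{E,c}$ with $c>0$ blowing up at $\lambda=1/c$ where the target stays finite) and a \emph{growth} obstruction as $\lambda\to\infty$ ($\psi_{P,c}$ growing exponentially, $\psi_N$ quadratically, $\psi_B$ only linearly). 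Checking the finitely many non-closure pairs against one of these mechanisms, while tracking the sign of the scale parameter that distinguishes the forward from the reverse arrows, completes the argument. I expect this completeness bookkeeping to be the most tedious portion, while the Kearns--Saul step underlying rows (1) and (2) is the most substantive obstacle.
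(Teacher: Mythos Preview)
Your reduction mechanism and the row-by-row plan match the paper's approach closely; the paper likewise reduces each implication to a pointwise inequality $\psi_1 \le a\psi_2$, proves these (invoking Hoeffding for rows (1)--(2), a lemma of \citet{bercu_concentration_2015} via Legendre--Fenchel duality for row (3), and direct second-derivative or monotonicity calculations for the rest), and establishes sharpness through third-order Taylor expansions at the origin together with domain/$\bar b$ obstructions for rows (7)--(9). Your choice to cite Kearns--Saul directly for row (1) is a legitimate alternative to the paper's duality route from Hoeffding's KL lower bound; either works.

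There is one genuine gap in your completeness argument. Showing that $\sup_\lambda \psi_1(\lambda)/\psi_2(\lambda) = \infty$ only rules out the \emph{mechanism} $\psi_1 \le a\psi_2$; it does not by itself show that the sub-$\psi_1$ condition fails to imply the sub-$\psi_2$ condition with variance $(aV_t)$. \Cref{th:canonical_assumption} requires only that the exponential process be \emph{upper bounded} by some supermartingale, so in principle a different witness could exist. The paper closes this by exhibiting an actual process: take $(S_t)$ to be a cumulative sum of i.i.d.\ draws from the distribution whose CGF is exactly $\psi_1$ (each of the five $\psi$ functions is a genuine CGF), with $V_t = t$. Then for any $a>0$ and any $\lambda$ with $\psi_1(\lambda) > a\psi_2(\lambda)$, one has $\E\exp\{\lambda S_t - a\psi_2(\lambda)t\} = \exp\{t(\psi_1(\lambda) - a\psi_2(\lambda))\} \to \infty$, which is incompatible with any supermartingale upper bound having $L_0 \le l_0$. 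You should add this step explicitly; your pole and growth obstructions then feed into it to finish the non-implication claims.
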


\section{Applications of \cref{th:uniform_chernoff}}
\label{sec:special_cases}

Here, we illustrate how \cref{th:uniform_chernoff} recovers or
strengthens a wide variety of existing results. Most results in this section
follow immediately upon combining one of the sufficient conditions from
\cref{th:discrete_facts}, \cref{th:novel_discrete_conds}, or
\cref{th:continuous_facts} with \cref{th:uniform_chernoff}, and we omit proof
details in many cases. As a rough plan, we first discuss classical
Cram\'er-Chernoff and Freedman-style bounds and then Blackwell's line crossing
inequalities. After discussing de la Pe\~na-style self-normalized bounds and
Pinelis' Banach-space inequalities, we end by exhibiting some continuous time
results and mention connections to the sequential probability ratio test.

\subsection{Fixed-time Cram\'er-Chernoff and Freedman-style uniform
  bounds}
\label{sec:chernoff}

\begin{figure}
\includegraphics{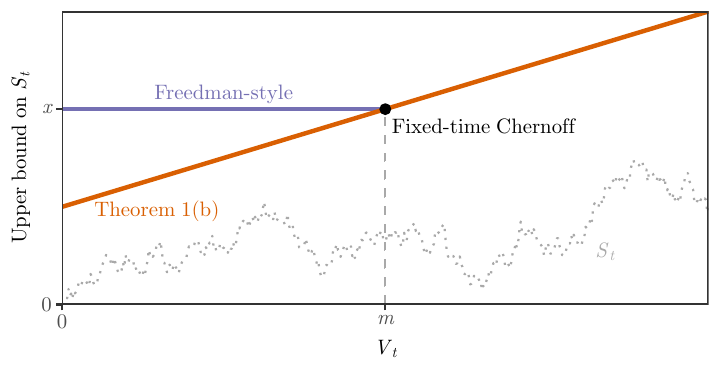}
\caption{Comparison of $(i)$ fixed-time Cram\'er-Chernoff bound
  \eqref{eq:basic_chernoff}, which bounds the deviations of $S_m$ at a fixed
  time $m$; $(ii)$ ``Freedman-style'' constant uniform bound \eqref{eq:freedman},
  which bounds the deviations of $S_t$ for all $t$ such that $V_t \leq m$, with
  a constant boundary equal in value to the fixed-time Cram\'er-Chernoff bound;
  and $(iii)$ linear uniform bound from \cref{th:uniform_chernoff}(b), which
  bounds the deviations of $S_t$ for all $t \in \N$, with a boundary growing
  linearly in $V_t$. Each bound gives the same tail probability and thus implies
  the preceding one. \label{fig:three_bounds}}
\end{figure}

In the discrete-time, scalar setting, a simple sufficient condition for a
process $(S_t)$ to be 1-sub-$\psi$ with variance process $(V_t)$ is that
\begin{align}
\E_{t-1} \expebrace{\lambda \Delta S_t - \psi(\lambda) \Delta V_t} \leq 1,
  \quad \forall t,
\end{align}
which is the standard assumption for a martingale-method Cram\'er-Chernoff
inequality, typically with $(V_t)$ predictable
\citep{mcdiarmid_concentration_1998, chung_concentration_2006,
  boucheron_concentration_2013}. When $(V_t)$ is deterministic, the fixed-time
Cram\'er-Chernoff method gives, for fixed $x$ and $m$,
\begin{align}
\P(S_m \geq x) \leq \expebrace{-V_m \psi^\star\pfrac{x}{V_m}},
\label{eq:basic_chernoff}
\end{align}
so \cref{th:uniform_chernoff}(b) is a uniform \emph{extension} of the
Cram\'er-Chernoff inequality, losing nothing at the fixed time $m$ [B; C or
D]. For random $(V_t)$, a stopping time argument due to
\citet{freedman_tail_1975} extends this to the uniform bound
\begin{align}
\P(\exists t \in \Tcal: S_t \geq x \text{ and } V_t \leq m)
  \leq \expebrace{-m \psi^\star\pfrac{x}{m}}. \label{eq:freedman}
\end{align}
When $(V_t)$ is deterministic, analogous uniform bounds can be obtained from
Doob's maximal inequality for submartingales, as in
\citet[eq.\ 2.17]{hoeffding_probability_1963}. \Cref{th:uniform_chernoff}
strengthens this ``Freedman-style'' inequality [B; C or D], since it yields
tighter bounds for all times $t$ such that $V_t < m$, and also extends the
inequality to hold for all times $t$ with $V_t > m$, as illustrated by
\cref{fig:three_bounds}.

\citet{tropp_freedmans_2011, tropp_user-friendly_2012} extends the scalar
Cram\'er-Chernoff approach to random matrices via control of the matrix
moment-generating function, giving matrix analogues of Hoeffding's, Bennett's,
Bernstein's and Freedman's inequalities. Following this approach,
\cref{th:uniform_chernoff} gives corresponding strengthened versions of these
inequalities for matrix-valued processes [B].

We summarize explicit results below for three well-known special cases reviewed
in \cref{th:intro_example}(a): Hoeffding's sub-Gaussian inequality for
observations bounded from above and below, with variance process depending only
on the radius of the interval of boundedness \citep{hoeffding_probability_1963};
Bennett's sub-Poisson inequality for observations bounded from above, with
variance process depending on the true variance of the observations
\citep{bennett_probability_1962}; and Bernstein's sub-gamma inequality for
observations satisfying a bound on growth of higher moments, also with a
variance process depending on the true variance
\citep{bernstein_theory_1927}. In each case below, we recover the standard,
fixed-sample result at $V_t = m$. Recall the definitions of
$\slope_P,\psi^\star_P,\slope_G,\psi^\star_G$ from \cref{tab:psi_transforms}.

\begin{corollary}\label{th:chernoff_cases}
\begin{enumerate}[label=(\alph*)]
\item Suppose $(Y_t)_{t \in \N}$ is an $\Hcal^d$-valued martingale whose
  increments satisfy $\Delta Y_t^2 \preceq A_t^2$ a.s.\ for all $t$ for some
  $\Hcal^d$-valued, predictable sequence $(A_t)$. Let
  $S_t \defineas \gamma_{\max}(Y_t)$, and let either
  \begin{align}
    V_t \defineas
      \frac{1}{2}\gamma_{\max}\eparen{\eangle{Y}_t + \sum_{i=1}^t A_i^2}
    \quad \text{or} \quad
    V_t \defineas \gamma_{\max}\eparen{\sum_{i=1}^t A_i^2}.
  \end{align}
  Then for any $x, m > 0$, we have
  \begin{align}
    \P\eparen{
      \exists t \in \N : S_t \geq x + \frac{x}{2m}(V_t - m)
    } \leq d \expebrace{-\frac{x^2}{2m}}.
  \end{align}
  This strengthens Hoeffding's inequality \citep{hoeffding_probability_1963}
  [A,B,D] and its matrix analogues in \citet[Theorem
  7.1]{tropp_user-friendly_2012} [B,E] and \citet[Corollary
  4.2]{mackey_matrix_2014} [A,B].
\item Suppose $(Y_t)_{t \in \N}$ is an $\Hcal^d$-valued martingale satisfying
  $\gamma_{\max}(\Delta Y_t) \leq c$ a.s. for all $t$. Let
  $S_t \defineas \gamma_{\max}(Y_t)$ and
  $V_t \defineas \gamma_{\max}(\eangle{Y}_t)$. Then for any $x, m > 0$, we have
  \begin{multline}
    \P\eparen{
      \exists t \in \N : S_t \geq
      x + \slope_P\pfrac{x}{m} \cdot (V_t - m)
    } \leq d \expebrace{-m \psi^\star_P\pfrac{x}{m}}
      \\ \leq d \expebrace{-\frac{x^2}{2(m + cx/3)}}.
  \end{multline}
  This strengthens Bennett's and Freedman's inequalities
  \citep{bennett_probability_1962, freedman_tail_1975} [B; C or D] for
  scalars and the corresponding matrix bounds from
  \citet{tropp_freedmans_2011, tropp_user-friendly_2012} [B].
\item Suppose $(S_t)$ is $l_0$-sub-gamma with variance process $(V_t)$ and scale
  parameter $c$. Then for any $x,m > 0$, we have
  \begin{multline}
    \P\eparen{
      \exists t \in \Tcal : S_t \geq
      x + \slope_G\pfrac{x}{m} \cdot (V_t - m)
    } \leq l_0 \expebrace{-m \psi^\star_G\pfrac{x}{m}}
      \\ \leq l_0 \expebrace{-\frac{x^2}{2(m + cx)}}.
  \end{multline}
  This strengthens Bernstein's inequality \citep{bernstein_theory_1927} [B;
  C or D], along with the matrix Bernstein inequality
  \citep{tropp_user-friendly_2012} [B].
\end{enumerate}
\end{corollary}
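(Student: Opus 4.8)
The plan is to obtain each part by first certifying that the relevant maximum-eigenvalue process lies in the appropriate class $\subpsiclass$ via the sufficient conditions of \cref{th:discrete_facts,th:novel_discrete_conds}, and then invoking \cref{th:uniform_chernoff}(b) with the matching $\psi$, reading the boundary slope $\slope(x/m)$ and the exponent $m\psi^\star(x/m)$ directly off \cref{tab:psi_transforms}. Concretely, I would use $\slope_N(u)=u/2$ and $\psi_N^\star(u)=u^2/2$ in the sub-Gaussian case, and the listed $\slope_P,\psi_P^\star$ and $\slope_G,\psi_G^\star$ in the sub-Poisson and sub-gamma cases. Since each of $\psi_N$, $\psi_{P,c}$ (with $c>0$), and $\psi_{G,c}$ (with $c\geq 0$) has $\bar{b}=\infty$, the admissibility constraint $x\in(0,m\bar{b})$ of \cref{th:uniform_chernoff}(b) holds for all $x>0$, so the boundaries and exponents quoted in the corollary are exactly those produced by the master theorem.

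For part (a), the second variance process $V_t=\gammamax(\sum_{i=1}^t A_i^2)$ is precisely the one furnished by the Hoeffding~II condition \cref{th:novel_discrete_conds}(h), so $(S_t)$ is $d$-sub-$\psi_N$ and the claim follows on substituting $\slope_N(x/m)=x/(2m)$ and $m\psi_N^\star(x/m)=x^2/(2m)$. For the first variance process $V_t=\tfrac12\gammamax(\eangle{Y}_t+\sum_{i=1}^t A_i^2)$ I would instead start from the General self-normalized~I condition \cref{th:novel_discrete_conds}(f), which makes $(S_t)$ sub-$\psi_N$ with variance process $\gammamax([Y]_t+2\eangle{Y}_t)/3$, and then enlarge this process. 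Because $A_t$ is predictable and $\Delta Y_t^2\preceq A_t^2$, we have $[Y]_t\preceq\sum_i A_i^2$ and $\eangle{Y}_t\preceq\sum_i A_i^2$ in the semidefinite order, whence $2[Y]_t+\eangle{Y}_t\preceq 3\sum_i A_i^2$ and so $([Y]_t+2\eangle{Y}_t)/3\preceq(\eangle{Y}_t+\sum_i A_i^2)/2$. Applying monotonicity of $\gammamax$ together with the elementary fact that inflating $V_t$ preserves the sub-$\psi$ property (since $\psi_N\geq 0$ makes $\exp\{\lambda S_t-\psi_N(\lambda)V_t\}$ only smaller) shows the larger first variance process is also admissible.

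For part (b), the Bennett condition $\gammamax(\Delta Y_t)\leq c$ is exactly \cref{th:discrete_facts}(c), making $(S_t)$ $d$-sub-$\psi_{P,c}$ with $V_t=\gammamax(\eangle{Y}_t)$, so \cref{th:uniform_chernoff}(b) delivers the first inequality verbatim; in part (c) the sub-gamma hypothesis is assumed outright, so the first inequality is immediate. The two simplified inequalities both rest on the single convexity fact $\psi_{G,c}^\star(u)\geq u^2/[2(1+cu)]$, obtained by applying $\sqrt{1+2cu}\leq 1+cu$ to the denominator of $\psi_G^\star$ in \cref{tab:psi_transforms}; with $u=x/m$ this gives $m\psi_{G,c}^\star(x/m)\geq x^2/[2(m+cx)]$, settling part (c). For part (b) I would additionally invoke row~(5) of \cref{tab:psi_relations}, namely $\psi_{P,c}\leq\psi_{G,c/3}$, which reverses under the Legendre--Fenchel transform to $\psi_{P,c}^\star\geq\psi_{G,c/3}^\star$; the same lower bound at scale $c/3$ then yields $m\psi_{P,c}^\star(x/m)\geq x^2/[2(m+cx/3)]$, and since a larger exponent shrinks $\exp(-m\psi^\star)$ this produces the stated second bound.

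The main obstacle I anticipate is the first variance process of part (a): unlike every other case it is not handed to us by a single sufficient condition, and justifying it requires both the General self-normalized~I result and the semidefinite comparison $([Y]_t+2\eangle{Y}_t)/3\preceq(\eangle{Y}_t+\sum_i A_i^2)/2$. The care here is that this is an inequality between Hermitian matrices, not scalars, so I must verify $[Y]_t\preceq\sum_i A_i^2$ and $\eangle{Y}_t\preceq\sum_i A_i^2$ in the semidefinite order and use that $\gammamax$ respects it; once that comparison is in hand, everything else is routine substitution from \cref{tab:psi_transforms}.
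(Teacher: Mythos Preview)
Your proposal is correct and follows the same overall template as the paper: verify the sub-$\psi$ property via one of the sufficient conditions in \cref{th:discrete_facts,th:novel_discrete_conds}, then read off the boundary and exponent from \cref{th:uniform_chernoff}(b) and \cref{tab:psi_transforms}; the simplified exponents in (b) and (c) via $\psi_{P,c}^\star\geq\psi_{G,c/3}^\star$ and $\psi_G^\star(u)\geq u^2/[2(1+cu)]$ are exactly right.

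The only substantive difference is in part~(a). The paper derives \emph{both} variance processes from the single condition \cref{th:novel_discrete_conds}(g), which yields $V_t=\tfrac12\gammamax\bigl([Y_+]_t+\eangle{Y_-}_t\bigr)$; the first stated $V_t$ then follows from $[Y_+]_t\preceq\sum_i A_i^2$ together with the trivial $\eangle{Y_-}_t\preceq\eangle{Y}_t$, and the second from the further bound $\eangle{Y_-}_t\preceq\sum_i A_i^2$. Your route---(h) directly for the second $V_t$, and (f) plus the semidefinite comparison $2[Y]_t+\eangle{Y}_t\preceq 3\sum_i A_i^2$ for the first---is equally valid, and indeed the paper's own proof of (h) passes through (f). The paper's use of (g) is marginally cleaner in that it needs only the one-sided bounds on $[Y_+]_t$ and $\eangle{Y_-}_t$, whereas your inequality uses the full $[Y]_t$ and $\eangle{Y}_t$; but both rely on the same underlying monotonicity ($\psi_N\geq 0$ lets one inflate $V_t$) and the same semidefinite dominance $\Delta Y_t^2\preceq A_t^2$, so the difference is cosmetic.
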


Case (a) is a consequence of \cref{th:novel_discrete_conds}(g); see also
\cref{th:delyon_uniform}, which uses
$V_t = \frac{1}{2} \gamma_{\max}([Y_+]_t + \eangle{Y_-}_t)$. The first setting
of $V_t$ in case (a) follows from the bound
$[Y_+]_t \preceq \sum_{i=1}^t A_i^2$, and further upper bounding
$\eangle{Y_-}_t \preceq \sum_{i=1}^t A_i^2$ yields the second setting of
$V_t$. As is well known, the Hoeffding-style bound in part (a) and the
Bennett-style bound in part (b) are not directly comparable: $V_t$ may be
smaller in part (b), but $\psi^\star_P \leq \psi^\star_N$, so neither subsumes
the other. We remark that
$\psi^\star_P(u) \geq \frac{u}{2c} \arcsinh\pfrac{cu}{2}$, so the Bennett-style
inequality in part (b) is an improvement on the inequality of
\citet{prokhorov_extremal_1959} for sums of independent random variables, as
noted by \citet{hoeffding_probability_1963}, as well as its extension to
martingales in \citet{de_la_pena_general_1999}.

As an example of the Hermitian dilation technique for extending bounds on
Hermitian matrices to bounds for rectangular matrices, we give a bound for
rectangular matrix Gaussian and Rademacher series, following
\citet{tropp_user-friendly_2012}; here $\opnorm{A}$ denotes the largest singular
value of $A$. The proof is in \cref{sec:proof_rectangular_series}.
\begin{corollary}\label{th:rectangular_series}
  Consider a sequence $(B_t)_{t \in \N}$ of fixed matrices with dimension
  $d_1 \times d_2$, and let $(\epsilon_t)_{t \in \N}$ be a sequence of
  independent standard normal or Rademacher variables. Let
  $S_t \defineas \opnorm{\sum_{i=1}^t \epsilon_i B_i}$ and
  \begin{align}
    V_t \defineas \max\ebrace{\opnorm{\sum_{i=1}^t B_i B_i^\star},
        \opnorm{\sum_{i=1}^t B_i^\star B_i}}.
  \end{align}
  Then for any $x, m > 0$, we have
\begin{align}
  \P\eparen{
    \exists t \in \N : S_t \geq x + \frac{x}{2m}(V_t - m)
  } \leq (d_1 + d_2) \expebrace{-\frac{x^2}{2m}}.
\end{align}
This strengthens Corollary 4.2 of \citet{tropp_user-friendly_2012} [B].
\end{corollary}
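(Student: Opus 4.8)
The plan is to reduce the rectangular problem to a Hermitian one via the Hermitian dilation and then invoke \cref{th:uniform_chernoff}(b) with the sub-Gaussian function $\psi_N$. For a $d_1 \times d_2$ matrix $B$, write $\mathcal{D}(B) \defineas \bigl(\begin{smallmatrix} 0 & B \\ B^\star & 0 \end{smallmatrix}\bigr) \in \Hcal^{d_1+d_2}$ for its dilation. First I would record the two standard facts \citep{tropp_user-friendly_2012}: that $\gammamax(\mathcal{D}(B)) = \opnorm{B}$, since the nonzero eigenvalues of $\mathcal{D}(B)$ are exactly $\pm$ the singular values of $B$; and that $\mathcal{D}(B)^2 = \bigl(\begin{smallmatrix} BB^\star & 0 \\ 0 & B^\star B \end{smallmatrix}\bigr)$. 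Setting $Y_t \defineas \mathcal{D}\eparen{\sum_{i=1}^t \epsilon_i B_i} = \sum_{i=1}^t \epsilon_i \mathcal{D}(B_i)$, the first fact gives $\gammamax(Y_t) = S_t$, so $(Y_t)$ is an $\Hcal^{d_1+d_2}$-valued martingale whose maximum-eigenvalue process is exactly the quantity we wish to control.

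Next I would establish the conditional matrix sub-Gaussian bound $\E_{t-1} e^{\lambda \Delta Y_t} \preceq \expebrace{\tfrac{\lambda^2}{2} \mathcal{D}(B_t)^2}$ for every $\lambda \geq 0$. Since each $B_t$ is fixed, $\Delta Y_t = \epsilon_t \mathcal{D}(B_t)$ with $\mathcal{D}(B_t)$ a constant Hermitian matrix, so diagonalizing $\mathcal{D}(B_t) = U \Lambda U^\star$ reduces the matrix expectation to the scalar moment-generating functions of its eigenvalues: for the Gaussian case $\E e^{\lambda \epsilon a} = e^{\lambda^2 a^2/2}$ gives equality, while for the Rademacher case $\E e^{\lambda \epsilon a} = \cosh(\lambda a) \leq e^{\lambda^2 a^2/2}$ gives the inequality, and in both cases the claim follows after conjugating back by $U(\cdot)U^\star$. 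Because the variance proxy $\mathcal{D}(B_t)^2$ is deterministic, hence predictable, Tropp's trace-exponential construction \citep{tropp_freedmans_2011, tropp_user-friendly_2012} — the same machinery underlying \cref{th:discrete_facts,th:novel_discrete_conds} — shows that $L_t(\lambda) \defineas \tr \expebrace{\lambda Y_t - \psi_N(\lambda) W_t}$, with $W_t \defineas \sum_{i=1}^t \mathcal{D}(B_i)^2$, is a nonnegative supermartingale with $L_0(\lambda) = \tr I_{d_1+d_2} = d_1+d_2$. Using $W_t \preceq \gammamax(W_t) I$ and $\psi_N(\lambda) \geq 0$, one checks $\gammamax(\lambda Y_t - \psi_N(\lambda) W_t) \geq \lambda S_t - \psi_N(\lambda)\gammamax(W_t)$, whence $L_t(\lambda) \geq \gammamax(\exp(\cdot)) \geq \expebrace{\lambda S_t - \psi_N(\lambda)\gammamax(W_t)}$; thus $(S_t)$ is $(d_1+d_2)$-sub-$\psi_N$ with variance process $V_t \defineas \gammamax(W_t)$.

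It remains to identify $V_t$ and apply the master theorem. From the block form of $\mathcal{D}(B_i)^2$, the matrix $W_t$ is block diagonal with blocks $\sum_{i \leq t} B_i B_i^\star$ and $\sum_{i \leq t} B_i^\star B_i$, so $\gammamax(W_t) = \max\ebrace{\opnorm{\sum_{i \leq t} B_i B_i^\star}, \opnorm{\sum_{i \leq t} B_i^\star B_i}}$, exactly the stated $V_t$. Reading the sub-Gaussian transforms from \cref{tab:psi_transforms} — $\bar b = \infty$, $\slope_N(u) = u/2$, and $\psi^\star_N(u) = u^2/2$ — \cref{th:uniform_chernoff}(b) with $l_0 = d_1 + d_2$ then gives, for any $x, m > 0$, the boundary $x + \slope_N\pfrac{x}{m}(V_t - m) = x + \tfrac{x}{2m}(V_t - m)$ and the exponent $m \psi^\star_N\pfrac{x}{m} = x^2/(2m)$, which is precisely the claimed bound. (For Rademacher this is exactly the instantiation of \cref{th:chernoff_cases}(a) with $A_t^2 = \mathcal{D}(B_t)^2$.) I expect the only genuine subtlety to be the Gaussian case: unlike Rademacher, it has unbounded increments and so is \emph{not} covered by the bounded-increment Hoeffding~II condition of \cref{th:novel_discrete_conds}(h); one must instead verify the matrix sub-Gaussian moment-generating-function bound directly, which is the diagonalization step above. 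Everything else is bookkeeping with the dilation identities and the closed forms in \cref{tab:psi_transforms}.
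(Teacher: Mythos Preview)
Your proposal is correct and follows essentially the same route as the paper: apply the Hermitian dilation, verify the matrix sub-Gaussian MGF bound $\E_{t-1} e^{\lambda \Delta Y_t} \preceq e^{\psi_N(\lambda)\mathcal{D}(B_t)^2}$ (the paper cites \citet[Lemma 4.3]{tropp_user-friendly_2012} for this, which is exactly your diagonalization argument), feed this into the trace-exponential supermartingale construction (\cref{th:sub_psi_lemma}), read off $V_t$ from the block-diagonal structure of $W_t$, and apply \cref{th:uniform_chernoff}(b). Your explicit treatment of the Gaussian-versus-Rademacher distinction and the inequality $L_t(\lambda)\ge e^{\lambda S_t-\psi_N(\lambda)V_t}$ simply spells out what the paper leaves to the cited lemmas.
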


\subsection{Line-crossing inequalities}\label{sec:line_crossing}

Before giving specific results in this section, we start with simplified
versions of \cref{th:uniform_chernoff}(d) which are useful for recovering
existing results. The probability bound in \eqref{eq:late_crossing_approx} is
merely an analytically simplified upper bound on that from
\cref{th:uniform_chernoff}(d). We prove the following in
\cref{sec:proof_late_crossing_simplified}.

\begin{corollary}\label{th:late_crossing_simplified}
  If $(S_t)$ is $l_0$-sub-$\psi$ with variance process $(V_t)$ and $\psi$ is
  CGF-like, then for any $m \geq 0$, $x > 0$ and $b \in (0, \bar{b})$, we have
\begin{multline}
\P\eparen{\exists t \in \Tcal: V_t \geq m \text{ and } S_t \geq x + b (V_t - m)}
  \\ \leq l_0 \expebrace{-m \psi^\star(b) - (x - bm) \psistarprime(b)}.
    \label{eq:late_crossing_approx}
\end{multline}
In particular, for $m > 0$, we have
\begin{align}
  \P\eparen{\exists t \in \Tcal: V_t \geq m \text{ and } S_t \geq b V_t}
  \leq l_0 \expebrace{-m \psi^\star(b)}. \label{eq:line_thru_origin}
\end{align}
\end{corollary}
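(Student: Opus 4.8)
The plan is to obtain \eqref{eq:late_crossing_approx} directly from the exponential supermartingale bound \eqref{eq:simple_linear_bound} of the proof of \cref{th:uniform_chernoff}, using a single fixed value of $\lambda$ rather than optimizing over a feasible set. Because $b \in (0, \bar{b})$ and $\psi$ is CGF-like, the value $\lambda \defineas \psistarprime(b) = {\psi'}^{-1}(b)$ is well defined and strictly positive, and the supremum defining $\psi^\star(b)$ is attained at this interior point, so that $\lambda b - \psi(\lambda) = \psi^\star(b)$. This duality identity is the only nontrivial ingredient beyond \eqref{eq:simple_linear_bound}.

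First I would work on the event $\brace{\exists t \in \Tcal: V_t \geq m \text{ and } S_t \geq x + b(V_t - m)}$ and bound the exponent from below at a witnessing time $t$. Since $\lambda > 0$ and $S_t \geq (x - bm) + b V_t$ there, substituting the duality identity gives
\begin{align}
\lambda S_t - \psi(\lambda) V_t
  \geq \lambda(x - bm) + \eparen{\lambda b - \psi(\lambda)} V_t
  = \psistarprime(b)(x - bm) + \psi^\star(b) V_t.
\end{align}
Because $\psi^\star(b) \geq 0$ (as $\psi(0) = 0$ forces $\psi^\star \geq 0$) and $V_t \geq m$ on this event, I may replace $V_t$ by $m$ to obtain the constant lower bound $z \defineas m \psi^\star(b) + (x - bm)\psistarprime(b)$. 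Hence the event is contained in $\brace{\exists t \in \Tcal: \expebrace{\lambda S_t - \psi(\lambda) V_t} \geq e^z}$, and \eqref{eq:simple_linear_bound} yields the probability bound $l_0 e^{-z}$, which is exactly \eqref{eq:late_crossing_approx}. The special case \eqref{eq:line_thru_origin} then follows by setting $x = bm$ --- legitimate since $m > 0$ gives $x = bm > 0$ --- which collapses the boundary to $b V_t$ and the exponent to $m \psi^\star(b)$.

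I expect no genuine obstacle here, only two points to verify carefully: that the supremum in $\psi^\star(b)$ is attained at the interior point ${\psi'}^{-1}(b)$, so the duality relation is an equality; and that the containment of events requires no sign condition on $x - bm$, since only $\psi^\star(b) \geq 0$ together with $V_t \geq m$ is used when passing from $\psi^\star(b) V_t$ to $\psi^\star(b) m$. Finally, to reconcile this with the description of \eqref{eq:late_crossing_approx} as a simplified upper bound on \cref{th:uniform_chernoff}(d), I would note that the tangent-line inequality $\psi^\star(u) \geq \psi^\star(b) + \psistarprime(b)(u - b)$, valid by convexity of $\psi^\star$, applied at $u = x/m$ and multiplied by $m$, shows that the exponent $m \psi^\star(x/m)$ appearing in the second case of \eqref{eq:late_crossing_bound} dominates the exponent in \eqref{eq:late_crossing_approx}; thus the simplified bound is never smaller than the bound of part (d), confirming it as a valid weakening.
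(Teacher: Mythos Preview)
Your argument is correct and is in fact more direct than the paper's. The paper establishes \eqref{eq:late_crossing_approx} by comparing exponents with \cref{th:uniform_chernoff}(d): it defines $h(b) \defineas m\psi^\star(b) + (x-bm)\psistarprime(b)$ and checks separately that $h(b) \leq (x - (b \bmin \bar{b})m)\decay(b)$ in the first regime of \eqref{eq:late_crossing_bound} and $h(b) \leq m\psi^\star(x/m)$ in the second, the latter via the very tangent-line inequality for $\psi^\star$ you mention at the end. This requires the identity $\slope^{-1}(u) = \psi'(\decay(u))$ and a short monotonicity argument for $h$. You bypass all of this by choosing the single value $\lambda = \psistarprime(b)$ and applying \eqref{eq:simple_linear_bound} directly, using only the duality relation $\lambda b - \psi(\lambda) = \psi^\star(b)$ and the nonnegativity of $\psi^\star$.

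What the paper's route buys is an explicit demonstration that \eqref{eq:late_crossing_approx} is never tighter than \cref{th:uniform_chernoff}(d) in either regime, which justifies the description of the corollary as ``merely an analytically simplified upper bound'' on part (d). Your proof does not establish the comparison in the first regime (when $m=0$ or $\slope(x/m) > b$), but this is inessential to the corollary's statement; your closing remark already covers the second regime. Your approach is cleaner as a standalone proof of the corollary.
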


In fitting with the approach of this paper, \cref{th:uniform_chernoff}(d) and
\cref{th:late_crossing_simplified} bound the upcrossing probability on
$\brace{V_t \geq m}$ using the results of \cref{th:uniform_chernoff}(a,b) and a
geometric argument. It may seem naive and wasteful to bound a line-crossing
probability on $\brace{V_t \geq m}$ using a bound which applies for
$\brace{V_t > 0}$. The literature includes a handful of results bounding
line-crossing probabilities on $\brace{V_t \geq m}$ which appear to give bounds
tighter than what \cref{th:uniform_chernoff} offers, by making more direct use
of the intrinsic-time condition \citep{blackwell_large_1997,
  khan_$l_p$-version_2009}. Below we demonstrate that this is not true: we give
several special cases of \cref{th:uniform_chernoff}(d) and
\cref{th:late_crossing_simplified} which improve upon existing results.

\begin{corollary}\label{th:gamma_line_crossing}
  Suppose $(S_t)$ is $l_0$-sub-gamma with variance process $(V_t)$ and scale
  parameter $c$.
\begin{enumerate}[label=(\alph*)]
\item For any $a, b > 0$, we have
  \begin{align}
    \P\eparen{\exists t \in \Tcal : S_t \geq a + b V_t}
      \leq l_0 \expebrace{-\frac{2ab}{1 + 2cb}}.
  \end{align}
  When $\Tcal = \N$, $c=0$ and $d=1$ this strengthens Theorem 1 of
  \citet{blackwell_large_1997} [A; C or D], which is written for
  discrete-time scalar processes with bounded increments.
\item For any $m, b > 0$, we have
  \begin{multline}
    \P\eparen{\exists t \in \Tcal : V_t \geq m \text{ and } S_t \geq b V_t}
    \leq l_0 \expebrace{-m \psi^\star_G(b)}
    \\ \leq l_0 \expebrace{-\frac{b^2 m}{2(1 + cb)}}.
  \end{multline}
  When $\Tcal = \N$, $c=0$ and $d=1$ this strengthens the second bound in
  Theorem 2 of \citet{blackwell_large_1997} [A; C or D], which is written for
  discrete-time scalar processes with bounded increments.
\end{enumerate}
\end{corollary}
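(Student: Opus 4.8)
The plan is to instantiate the general machinery already developed—\cref{th:uniform_chernoff}(a) and the line-through-the-origin bound \eqref{eq:line_thru_origin} of \cref{th:late_crossing_simplified}—with the specific choice $\psi = \psi_{G,c}$, reading off the required transforms from \cref{tab:psi_transforms}. Both parts are essentially immediate given that $(S_t)$ is assumed $l_0$-sub-$\psi_{G,c}$ and that $\psi_{G,c}$ is CGF-like (strictly convex, twice continuously differentiable, with $\psi_{G,c}(0)=\psi_{G,c}'(0_+)=0$ and unbounded supremum). The only genuine work is confirming two elementary formulas for the transforms of $\psi_{G,c}$ and, for part (b), verifying a single simplifying inequality.

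For part (a) I would apply \cref{th:uniform_chernoff}(a) directly, which yields $\P(\exists t \in \Tcal: S_t \geq a + b V_t) \leq l_0 \expebrace{-a \decay(b)}$, leaving only the computation of the decay transform of $\psi_{G,c}$. From the definition $\decay(u) = \sup\{\lambda \in (0,\lambda_{\max}): \psi(\lambda)/\lambda \leq u\}$ together with $\psi_{G,c}(\lambda)/\lambda = \lambda/[2(1-c\lambda)]$, solving $\lambda/[2(1-c\lambda)] \leq b$ gives $\lambda \leq 2b/(1+2cb)$ whenever $1+2cb > 0$, so $\decay_G(b) = 2b/(1+2cb)$ as recorded in \cref{tab:psi_transforms}. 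Substituting produces the stated bound $l_0 \expebrace{-2ab/(1+2cb)}$. Setting $c=0$ recovers $\decay_G(b)=2b$, hence exponent $2ab$, matching Blackwell's Theorem~1 while replacing his boundedness and independence assumptions with the weaker sub-Gaussian hypothesis.

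For part (b) I would invoke \eqref{eq:line_thru_origin}, which gives $\P(\exists t \in \Tcal: V_t \geq m \text{ and } S_t \geq b V_t) \leq l_0 \expebrace{-m \psi^\star(b)}$; with $\psi = \psi_{G,c}$ and the Legendre-Fenchel transform $\psi^\star_G(b) = b^2/(1+cb+\sqrt{1+2cb})$ from \cref{tab:psi_transforms}, this is exactly the first inequality. The second inequality reduces to $\psi^\star_G(b) \geq b^2/[2(1+cb)]$, i.e.\ $2(1+cb) \geq 1+cb+\sqrt{1+2cb}$, i.e.\ $1+cb \geq \sqrt{1+2cb}$; squaring the two nonnegative sides turns this into $(1+cb)^2 \geq 1+2cb$, which is simply $c^2 b^2 \geq 0$ and therefore always holds.

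I do not anticipate a substantive obstacle, as the heavy lifting is entirely delegated to \cref{th:uniform_chernoff} and \cref{th:late_crossing_simplified}. The one point demanding care is the domain of validity of the transform formulas when $c < 0$: there $\bar{b} = -1/(2c)$ is finite, so one must restrict to $b < \bar{b}$ to guarantee $1+2cb > 0$ (keeping the square root real and the bound nonvacuous), whereas for $c \geq 0$ the formulas hold for every $b > 0$. A final, purely expository step is to record the claimed strengthenings over Blackwell's Theorems~1 and~2 by specializing to $c=0$, $d=1$, $\Tcal=\N$ and noting that bounded increments imply sub-Gaussianity.
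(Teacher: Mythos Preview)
Your proposal is correct and matches the paper's intended approach: the paper explicitly states that results in this section ``follow immediately'' from \cref{th:uniform_chernoff} combined with the relevant sufficient conditions, and omits proof details. You have correctly instantiated \cref{th:uniform_chernoff}(a) with $\decay_G$ for part (a) and \eqref{eq:line_thru_origin} with $\psi^\star_G$ for part (b), verifying the elementary inequality $1+cb \geq \sqrt{1+2cb}$ for the final simplification; your remark about the domain restriction $b < \bar{b}$ when $c<0$ is a sensible caveat that the paper glosses over.
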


In discrete time, as presented in \cref{th:discrete_facts}, for a process with
bounded increments we may construct both sub-Bernoulli and sub-Gaussian
bounds. The sub-Bernoulli case, in combination with \eqref{eq:line_thru_origin},
yields the following:
\begin{corollary}\label{th:bernoulli_line_origin}
  Suppose $(Y_t)_{t \in \N}$ is an $\Hcal^d$-valued martingale satisfying
  $\opnorm{\Delta Y_t} \leq 1$ a.s.\ for all $t \in \N$. Then for any
  $b \in [0, 1]$ and $m \geq 1$, we have
\begin{align}
  \P\eparen{
    \exists t \in \N: t \geq m \text{ and } \gamma_{\max}(Y_t) \geq b t
  } \leq \ebracket{(1+b)^{(1+b)} (1-b)^{(1-b)}}^{-m/2}.
\end{align}
This strengthens the first bound in Theorem 2 of \citet{blackwell_large_1997}
[D].
\end{corollary}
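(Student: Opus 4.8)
The plan is to read \cref{th:bernoulli_line_origin} as a direct instance of the line-through-origin bound \eqref{eq:line_thru_origin} of \cref{th:late_crossing_simplified}, after converting the operator-norm hypothesis into a sub-Bernoulli statement. First I would note that, since $(Y_t)$ is Hermitian, the bound $\opnorm{\Delta Y_t} \le 1$ is equivalent to $-I_d \preceq \Delta Y_t \preceq I_d$, so \cref{th:discrete_facts}(b) applies with symmetric range parameters $g = h = 1$. This shows that $S_t \defineas \gamma_{\max}(Y_t)$ is sub-Bernoulli with $\psi = \psi_{B,1,1}$ and variance process $V_t = ght = t$, taking $l_0 = d$. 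Here $\psi_{B,1,1}(\lambda) = \log\cosh\lambda$ is CGF-like with $\bar{b} = 1/g = 1$, so \cref{th:late_crossing_simplified} is applicable for slopes $b \in (0,1)$.

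Next I would apply \eqref{eq:line_thru_origin} to this pair $(S_t, V_t)$ with $m \ge 1 > 0$. Because $V_t = t$ is deterministic, the event $\ebrace{V_t \ge m \text{ and } S_t \ge b V_t}$ is literally $\ebrace{t \ge m \text{ and } \gamma_{\max}(Y_t) \ge b t}$, so the corollary immediately gives the bound $l_0 \expebrace{-m\psi^\star_B(b)}$. Everything then reduces to evaluating the Legendre-Fenchel transform $\psi^\star_B$ at the symmetric point $g = h = 1$.

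For that evaluation I would use the entry of \cref{tab:psi_transforms}, namely $\psi^\star_{B,g,h}(u) = (gh)^{-1}\KL{\frac{g(1+hu)}{g+h}}{\frac{g}{g+h}}$, which collapses at $g = h = 1$ to $\psi^\star_B(b) = \KL{\frac{1+b}{2}}{\frac12}$. Expanding the Bernoulli divergence gives $\psi^\star_B(b) = \tfrac12\eparen{(1+b)\log(1+b) + (1-b)\log(1-b)}$, and exponentiating converts $\expebrace{-m\psi^\star_B(b)}$ into exactly $\ebracket{(1+b)^{(1+b)}(1-b)^{(1-b)}}^{-m/2}$, the claimed right-hand side. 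The endpoints are handled separately: at $b = 0$ the bound equals $1$ and the statement is vacuous, while $b = 1$ (not interior to $(0,\bar{b})$) follows by letting $b \uparrow 1$ and using continuity of $\psi^\star_B$ together with $(1-b)^{1-b} \to 1$.

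I do not expect a genuine obstacle here, as this is essentially a bookkeeping corollary: all the probabilistic content is supplied by \cref{th:discrete_facts,th:late_crossing_simplified}, and the only real work is the algebraic simplification of $\psi^\star_{B,1,1}$ into the Bernoulli KL and then into the bracketed power form. The one point I would track carefully is the prefactor: the trace-exponential supermartingale underlying the matrix sub-Bernoulli condition starts at $l_0 = d$, so the natural derivation yields $l_0 \expebrace{-m\psi^\star_B(b)}$ with $l_0 = d$; this dimensional factor collapses to $1$ in the scalar case $d = 1$, where the bound recovers Blackwell's Theorem 2 exactly, consistent with the [D] label in \cref{tab:improvements}.
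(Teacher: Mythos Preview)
Your approach is correct and matches the paper's own derivation, which simply says ``the sub-Bernoulli case, in combination with \eqref{eq:line_thru_origin}, yields the following'' before stating \cref{th:bernoulli_line_origin}. Your observation about the prefactor is also apt: the argument naturally produces $d \cdot \ebracket{(1+b)^{(1+b)}(1-b)^{(1-b)}}^{-m/2}$ with $l_0 = d$, and the missing $d$ in the stated bound appears to be a typo in the paper rather than a gap in your reasoning.
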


Theorems 4.1-4.3 of \citet{khan_$l_p$-version_2009} are closest in form to our
main results and represent key precedents to our framework. The simplified bound
\eqref{eq:late_crossing_approx} recovers Khan's Theorem 4.3 [C or D], while
\cref{th:uniform_chernoff}(d) improves the exponent [E]. Our
\cref{th:uniform_chernoff}(b) gives a strengthened version of Khan's
``Freedman-style'' Theorem 4.2 [B; C or D]. Khan's Theorem 4.1 is not strictly
comparable to our work since it involves an initial condition on \emph{nominal}
time, $t \geq t_0$, rather than on intrinsic time, $V_t \geq m$, but when $V_t$
is deterministic, then our \cref{th:uniform_chernoff}(d) is tighter [B; C or D;
E].

\subsection{Self-normalized uniform bounds}

Collectively, \citet{de_la_pena_general_1999, de_la_pena_moment_2000,
  de_la_pena_self-normalized_2004, de_la_pena_pseudo-maximization_2007,
  de_la_pena_theory_2009}; and \citet{de_la_pena_self-normalized_2009} give a
wide variety of sufficient conditions for the exponential process
$\expebrace{\lambda S_t - \psi(\lambda) V_t}$ to be a supermartingale in both
discrete- and continuous-time settings. They formulate their bounds for ratios
involving $S_t$ in the numerator and $V_t$ in the denominator, as in
\cref{th:uniform_chernoff}(c), and often specify initial-time conditions, as in
\cref{th:uniform_chernoff}(d). In this section we draw some comparisons between
\cref{th:uniform_chernoff} and their results. As a first example, consider the
boundary of \cref{th:uniform_chernoff}(c) for the ratio $S_t / V_t$, strictly
decreasing towards the asymptotic level $\slope(x)$. In particular, at time
$V_t = m$ the boundary equals $x$, so \cref{th:uniform_chernoff}(c) strengthens
various theorems of \citet{de_la_pena_general_1999} and
\citet{de_la_pena_pseudo-maximization_2007} which use a constant boundary after
time $V_t = m$ [B; C or D]; for example, Theorem 1.2B, eq.\ 1.5 of
\citet{de_la_pena_general_1999} states that
\begin{align}
  \P\eparen{
    \exists t \geq 1:
    V_t \geq m \text{ and } \frac{S_t}{V_t} \geq x
  } \leq \expebrace{-m \psi_G^\star(x)} \label{eq:de_la_pena}
\end{align}
for scalar processes $(S_t)$ which are 1-sub-gamma with variance process
$(V_t)$. As before, we give explicit results for special cases.

\begin{figure}
\includegraphics{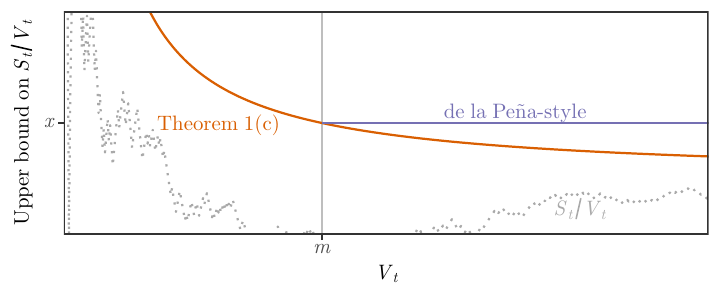}
\caption{Comparison of our decreasing boundary from
  \cref{th:uniform_chernoff}(c), as in inequality~\eqref{eq:dlp_strengthened},
  to a ``de la Pe\~na-style'' constant uniform bound as in
  inequality~\eqref{eq:de_la_pena}, which bounds the deviations of $S_t / V_t$
  for all $t$ such that $V_t \geq m$ with a constant
  boundary. \label{fig:ratio_constant}}
\end{figure}

\begin{corollary}\label{th:variance_normalized}
  Suppose $(S_t)$ is $l_0$-sub-gamma with variance process $(V_t)$ and scale
  parameter $c$. Then for any $x, m > 0$, we have
  \begin{multline}
    \P\eparen{
      \exists t \in \Tcal : \frac{S_t}{V_t}
      \geq \slope_G(x) \eparen{1 + \frac{m\sqrt{1 + 2cx}}{V_t}}
    } \leq l_0 \expebrace{-m \psi^\star_G(x)}
      \\ \leq l_0 \expebrace{-\frac{m x^2}{2(1 + cx)}}.
    \label{eq:dlp_strengthened}
  \end{multline}
  This strengthens eq.\ 1.5 from Theorem 1.2B of \citet{de_la_pena_general_1999}
  [B; C or D]. In the sub-Gaussian case (obtained at $c = 0$), the above bound
  simplifies to
  \begin{align}
    \P\eparen{
      \exists t \in \Tcal :
      \frac{S_t}{V_t + m} \geq x
    } \leq l_0 \expebrace{-2 m x^2}.
  \end{align}
  This strengthens Theorem 2.1 of \citet{de_la_pena_pseudo-maximization_2007}
  and Theorem 6.1 of \citet{de_la_pena_general_1999} [B, C~or~D].
\end{corollary}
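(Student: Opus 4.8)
The plan is to derive both displays directly from \cref{th:uniform_chernoff}(c), specialized to $\psi = \psi_{G,c}$, followed by elementary algebra on the closed forms collected in \cref{tab:psi_transforms}. Since $(S_t)$ is $l_0$-sub-gamma with scale $c$, it is $l_0$-sub-$\psi_{G,c}$ with variance process $(V_t)$, and $\psi_{G,c}$ is CGF-like, so part (c) applies with $\slope = \slope_G$ and $\psi^\star = \psi^\star_G$. The first step is to rewrite the boundary appearing in part (c) in multiplicative form. Expanding
\[
  x - \frac{x - \slope(x)}{V_t}(V_t - m)
    = \slope(x) + \frac{(x - \slope(x))\,m}{V_t}
    = \slope(x)\eparen{1 + \frac{x - \slope(x)}{\slope(x)} \cdot \frac{m}{V_t}},
\]
we see the boundary of part (c) always factors this way, so it only remains to identify the ratio $(x - \slope(x))/\slope(x)$ in the sub-gamma case.

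Second, using $\slope_G(x) = x / (1 + \sqrt{1 + 2cx})$ from \cref{tab:psi_transforms}, a one-line computation gives $x - \slope_G(x) = x\sqrt{1+2cx}/(1 + \sqrt{1+2cx})$, hence $(x - \slope_G(x))/\slope_G(x) = \sqrt{1+2cx}$. Substituting into the factored boundary yields exactly $\slope_G(x)(1 + m\sqrt{1+2cx}/V_t)$, and part (c) supplies the probability bound $l_0 \expebrace{-m\psi^\star_G(x)}$, establishing the first inequality in \eqref{eq:dlp_strengthened}. For the second inequality I would use $\psi^\star_G(x) = x^2/(1 + cx + \sqrt{1+2cx})$ together with the elementary bound $\sqrt{1+2cx} \le 1 + cx$ (equivalent to $0 \le c^2x^2$ after squaring), which gives $1 + cx + \sqrt{1+2cx} \le 2(1+cx)$ and therefore $\psi^\star_G(x) \ge x^2/[2(1+cx)]$, as required.

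Finally, for the sub-Gaussian specialization I would set $c = 0$, so that $\slope_G(x) = \slope_N(x) = x/2$, $\sqrt{1+2cx} = 1$, and $\psi^\star_G(x) = \psi^\star_N(x) = x^2/2$. The boundary collapses to $(x/2)(1 + m/V_t) = (x/2)(V_t + m)/V_t$, so the event $\{S_t/V_t \ge (x/2)(1 + m/V_t)\}$ coincides with $\{S_t/(V_t + m) \ge x/2\}$, and the bound reads $l_0 \expebrace{-mx^2/2}$. Reparametrizing via $x \mapsto 2x$ then produces $\P(\exists t \in \Tcal: S_t/(V_t + m) \ge x) \le l_0 \expebrace{-2mx^2}$, the stated form.

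I expect no serious obstacle: the argument is essentially bookkeeping over \cref{th:uniform_chernoff}(c) and the table entries, with the main content being the identity $(x - \slope_G(x))/\slope_G(x) = \sqrt{1+2cx}$. The only point requiring care is the domain. Part (c) requires $x \in (0, \bar{b})$, which is automatic when $c \ge 0$ since then $\bar{b} = \infty$, but when $c < 0$ it amounts to $x < 1/(2\abs{c})$, i.e.\ $1 + 2cx > 0$; this is precisely the condition ensuring $\sqrt{1+2cx}$ is real and the closed forms from \cref{tab:psi_transforms} are valid, so the ``$x, m > 0$'' in the statement is to be read with this understood restriction in the $c < 0$ regime.
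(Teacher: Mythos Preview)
Your proposal is correct and matches the paper's approach: the paper omits explicit proof details for this corollary, but the surrounding text makes clear it is obtained directly from \cref{th:uniform_chernoff}(c) with $\psi = \psi_{G,c}$, and the sentence following the corollary confirms the identity $\slope_G(x)(1 + m\sqrt{1+2cx}/V_t) = x$ at $V_t = m$, which is precisely your key computation. Your observation about the implicit domain restriction $x < \bar{b} = 1/(2\abs{c})$ in the $c < 0$ case is a valid caveat the paper leaves unstated.
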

Recall that $\slope_G(x) = x / (1 + \sqrt{1 + 2cx})$, so for the boundary in
\eqref{eq:dlp_strengthened}, we have
$\slope_G(x) (1 + m\sqrt{1 + 2cx} / V_t) \leq x$ for all $V_t \geq m$ with
equality at $V_t = m$. \Cref{th:variance_normalized}(a), therefore, gives the same
probability bound as \eqref{eq:de_la_pena} for a larger crossing
event. \Cref{fig:ratio_constant} visualizes this relationship.

More generally, when we normalize by $\alpha + \beta V_t$ and include an initial
time condition $V_t \geq m$, \cref{th:uniform_chernoff}(d) and
\cref{th:late_crossing_simplified} become the following:

\begin{corollary}\label{th:ratio_subg}
  If $(S_t)$ is $l_0$-sub-$\psi$ with variance process $(V_t)$, where $\psi$ is
  CGF-like and $\bar{b} = \infty$, then for any $\beta, x > 0$ and
  $\alpha, m \geq 0$ with at least one of $\alpha, m > 0$, we have
\begin{align}
  &\P\eparen{\exists t \in \Tcal: V_t \geq m
    \text{ and } \frac{S_t}{\alpha + \beta V_t} \geq x} \\
  &\qquad \leq \begin{cases}
    l_0 \expebrace{-\alpha x \decay(\beta x)},
      & \beta x \leq \slope\pfrac{x (\alpha + \beta m)}{m} \\
    l_0 \expebrace{-m \psi^\star\pfrac{x(\alpha + \beta m)}{m}},
      & \beta x \geq \slope\pfrac{x (\alpha + \beta m)}{m}
  \end{cases} \\
  &\qquad \leq l_0
    \expebrace{-m \psi^\star(\beta x) - \alpha x \psistarprime(\beta x)}.
\end{align}
In the case $(S_t)$ is sub-Gaussian, for any $\beta, x > 0$ and
$\alpha, m \geq 0$ with at least one of $\alpha, m > 0$, we have
\begin{multline}
  \P\eparen{\exists t \in \Tcal: V_t \geq m
    \text{ and } \frac{S_t}{\alpha + \beta V_t} \geq x}
  \\ \leq \expebrace{
    -x^2 \eparen{
      2 \alpha \beta
      + \frac{(\beta m - \alpha)^2 \indicator{\alpha \leq \beta m}}{2m}
    }
  },
\end{multline}
taking $0/0 = 0$ on the right-hand side when $m=0$. With
\cref{th:novel_discrete_conds}(d), this improves eq.\ 6.4 from Theorem 6.2 of
\citet{de_la_pena_general_1999} [C or D; E].
\end{corollary}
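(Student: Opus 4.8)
The plan is to recognize the self-normalized crossing event as an ordinary affine line-crossing event restricted to $\{V_t \geq m\}$, and then to invoke \cref{th:uniform_chernoff}(d) and \cref{th:late_crossing_simplified} directly. The condition $S_t / (\alpha + \beta V_t) \geq x$ rearranges to $S_t \geq \alpha x + \beta x V_t$, which I would rewrite in the form $S_t \geq x(\alpha + \beta m) + \beta x (V_t - m)$ used by those results. Thus I identify the intercept with $x(\alpha + \beta m)$ and the slope with $\beta x$, and the whole analysis hinges on the arithmetic identity $x(\alpha + \beta m) - (\beta x) m = \alpha x$, which is what collapses the exponents to the clean forms in the statement. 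The hypotheses of \cref{th:uniform_chernoff}(d) are met since $\beta x > 0$, $x(\alpha + \beta m) > 0$ (as at least one of $\alpha, m$ is positive and $\beta > 0$), and $m \geq 0$.

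For the two-case bound I would apply \cref{th:uniform_chernoff}(d) with these parameters. Since $\bar{b} = \infty$, we have $\beta x \bmin \bar{b} = \beta x$ and $m\bar{b} = \infty$, so the clause ``$x(\alpha + \beta m) > m\bar{b}$'' never fires and the case split is governed solely by comparing $\beta x$ against $\slope\pfrac{x(\alpha + \beta m)}{m}$. In the first case the exponent $(x(\alpha + \beta m) - (\beta x) m)\decay(\beta x)$ simplifies via the identity above to $\alpha x \decay(\beta x)$; in the second case it is $m \psi^\star\pfrac{x(\alpha + \beta m)}{m}$ unchanged. Because the two sub-bounds coincide at the threshold $\beta x = \slope\pfrac{x(\alpha + \beta m)}{m}$---precisely the coincidence $\decay(\slope(u)) = \psistarprime(u)$ established in the $(a)\implies(b)$ step---the non-strict inequalities used in both cases of the statement are admissible. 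The single simplified bound $l_0 \expebrace{-m \psi^\star(\beta x) - \alpha x \psistarprime(\beta x)}$ then follows at once from \eqref{eq:late_crossing_approx} with the same intercept and slope, again invoking $x(\alpha + \beta m) - (\beta x) m = \alpha x$; the requirement $\beta x \in (0, \bar{b})$ there holds because $\bar{b} = \infty$ and $\beta, x > 0$.

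For the sub-Gaussian specialization I would substitute $\psi = \psi_N$, for which the Gaussian row of \cref{tab:psi_transforms} gives $\psi^\star(u) = u^2/2$, $\psistarprime(u) = u$, $\decay(u) = 2u$, and $\slope(u) = u/2$. The threshold comparison of $\beta x$ with $\slope\pfrac{x(\alpha + \beta m)}{m} = \frac{x(\alpha + \beta m)}{2m}$ reduces, after clearing denominators, to the comparison of $\alpha$ with $\beta m$ (with the inequality reversing), which is the source of the indicator $\indicator{\alpha \leq \beta m}$. The first case then yields exponent $\alpha x \cdot 2\beta x = 2\alpha\beta x^2$, while the second yields $m \cdot \tfrac{1}{2}\pfrac{x(\alpha + \beta m)}{m}^2 = \frac{(\alpha + \beta m)^2}{2m}\, x^2$; the one-line expansion $\frac{(\alpha + \beta m)^2}{2m} = 2\alpha\beta + \frac{(\beta m - \alpha)^2}{2m}$ then merges the two cases into the single displayed expression with the indicator. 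The boundary situation $m = 0$ (where $\{V_t \geq m\}$ is vacuous, since $V_t \geq 0$ a.s., and one may simply apply \cref{th:uniform_chernoff}(a) with $a = \alpha x$ and $b = \beta x$) and the case $\alpha = 0$ are covered by the same substitutions, with the convention $0/0 = 0$ absorbing the degenerate term.

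I expect no serious obstacle: the result is in essence a change of variables feeding into \cref{th:uniform_chernoff}(d) and \cref{th:late_crossing_simplified}. The only points demanding care are the bookkeeping that carries the case conditions through the reparametrization---in particular honoring the convention $m\bar{b} = \infty$ and treating the degenerate $m = 0$ case---and verifying the algebraic identity $\frac{(\alpha + \beta m)^2}{2m} = 2\alpha\beta + \frac{(\beta m - \alpha)^2}{2m}$ that unifies the two sub-Gaussian cases.
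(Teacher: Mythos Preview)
Your proposal is correct and follows essentially the same route the paper takes: the text introducing \cref{th:ratio_subg} explicitly says that \cref{th:uniform_chernoff}(d) and \cref{th:late_crossing_simplified} ``become'' this corollary under the reparametrization you describe, and your identification of intercept $x(\alpha+\beta m)$ and slope $\beta x$ together with the arithmetic identity $x(\alpha+\beta m) - (\beta x)m = \alpha x$ is exactly the intended derivation. The sub-Gaussian specialization and the algebraic merge via $\frac{(\alpha+\beta m)^2}{2m} = 2\alpha\beta + \frac{(\beta m - \alpha)^2}{2m}$ are also correct and match the paper's claim.
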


A defining feature of self-normalized bounds is that they involve a variance
process $(V_t)$ constructed with the squared observations themselves rather than
just conditional variances or constants. Such normalization can be found in
common statistical procedures such as the $t$-test. Furthermore, it allows for
Gaussian-like concentration while reducing or eliminating moment conditions.
\Cref{th:novel_discrete_conds} gives several extensions of well-known conditions
for scalar sub-Gaussian concentration of self-normalized processes. As one
particular special case, \cref{th:novel_discrete_conds}(f) and (g) yield general
self-normalized uniform bounds for any discrete-time, square-integrable,
$\Hcal^d$-valued martingale, building upon breakthrough results obtained for
scalar processes by Bercu, Touati and Delyon:

\begin{corollary}\label{th:delyon_uniform}
  Suppose $(Y_t)_{t \in \N}$ is an $\Hcal^d$-valued martingale with
  $\E Y_t^2 < \infty$ for all $t \in \N$. Let
  ${S_t \defineas \gamma_{\max}(Y_t)}$ and either
  \begin{align}
    V_t \defineas \frac{1}{2} \gamma_{\max}([Y_+]_t + \eangle{Y_-}_t)
    \quad \text{or} \quad
    V_t \defineas \frac{1}{3} \gamma_{\max}([Y]_t + 2 \eangle{Y}_t).
  \end{align}
  Then for any $x, m > 0$, we have
\begin{align}
\P\eparen{\exists t \in \N: \frac{S_t}{V_t + m} \geq x}
  \leq d \expebrace{-2mx^2}.
\end{align}
This strengthens eq.\ 20 from Theorem 4 of \citet{delyon_exponential_2009}
[B,D], Theorem 2.1 of \citet{bercu_exponential_2008} [B,D,E], and an implicit
self-normalized bound of \citet[Corollary 4.2]{mackey_matrix_2014} [B].
\end{corollary}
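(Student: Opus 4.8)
The plan is to derive this corollary by composing one of the self-normalized sufficient conditions of \cref{th:novel_discrete_conds} with the sub-Gaussian specialization of \cref{th:variance_normalized}; since both ingredients are already in hand, the entire argument reduces to checking that the hypotheses and the two variance processes line up correctly. No fresh estimation is required.

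First I would note that the stated hypothesis $\E Y_t^2 < \infty$ for all $t \in \N$ guarantees that the conditional second moments $\E_{t-1} \Delta Y_t^2$ are finite for every $t$, which is exactly the hypothesis demanded by \cref{th:novel_discrete_conds}(f) and (g). Applying \cref{th:novel_discrete_conds}(g) (``general self-normalized II'') shows that $S_t = \gamma_{\max}(Y_t)$ is $d$-sub-Gaussian with variance process $V_t = \frac{1}{2}\gamma_{\max}([Y_+]_t + \eangle{Y_-}_t)$, which is the first stated choice of $(V_t)$; applying \cref{th:novel_discrete_conds}(f) (``general self-normalized I'') instead yields the $d$-sub-Gaussian property with $V_t = \frac{1}{3}\gamma_{\max}([Y]_t + 2\eangle{Y}_t)$, the second stated choice. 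In either case the pair $(S_t, V_t)$ belongs to the class $\mathbb{S}^d_{\psi_N}$.

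Because the sub-Gaussian function $\psi_N$ coincides with the sub-gamma function $\psi_{G,c}$ at scale $c = 0$, the process $(S_t)$ is in particular $d$-sub-gamma with scale parameter $c = 0$ and the same variance process. I would then invoke the sub-Gaussian case of \cref{th:variance_normalized} (its $c = 0$ specialization) with $l_0 = d$, which delivers precisely the claimed bound $\P(\exists t \in \N : S_t / (V_t + m) \geq x) \leq d\, e^{-2mx^2}$ for each of the two variance processes in turn. The one point meriting care is purely notational: the constant $m$ here plays the role of the additive offset in the self-normalizing denominator $V_t + m$, matching the offset $m$ in the sub-Gaussian display of \cref{th:variance_normalized}, rather than an initial-time threshold on $V_t$, so there is no leftover indicator or initial-time correction term to manage. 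Thus the only genuine obstacle is the bookkeeping of matching conditions (f) and (g) to the second and first stated variance processes respectively, which the indexing of \cref{th:novel_discrete_conds} settles directly.
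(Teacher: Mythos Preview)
Your proposal is correct and matches the paper's own (implicit) argument: the text immediately preceding the corollary signals that it follows by combining \cref{th:novel_discrete_conds}(f) and (g) with the sub-Gaussian specialization of \cref{th:variance_normalized}, exactly as you do. Your mapping of parts (f) and (g) to the two variance processes and the identification of $l_0=d$ are all correct.
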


\Cref{th:delyon_uniform} is remarkable for the fact that it gives Gaussian-like
concentration with only the existence of second moments for the increments. If
the increments have conditionally symmetric distributions, one may instead apply
\cref{th:novel_discrete_conds}(d) to achieve Gaussian-like concentration without
existence of any moments, as discovered by \citet{de_la_pena_general_1999} and
illustrated in the following example.
\begin{example}[Cauchy increments]\label{ex:cauchy}
  Let $(\Delta S_t)_{t \in \N}$ be i.i.d.\ standard Cauchy random
  variables (symmetric about zero).
  \cref{th:novel_discrete_conds}(d) shows that $(S_t)$ is sub-Gaussian with
  variance process $V_t = [S]_t$. \cref{th:variance_normalized} yields,
  for any $x, m > 0$,
  \begin{align}
    \P\eparen{
      \exists t \in \N: \frac{S_t}{[S]_t + m}
      \geq x
    } \leq \expebrace{-2 m x^2}.
  \end{align}
\end{example}
For another example, \cref{th:novel_discrete_conds}(i) gives a self-normalized
bound involving third rather than second moments:

\begin{corollary}\label{th:fan_uniform}
  Suppose $(Y_t)_{t \in \N}$ is an $\Hcal^d$-valued martingale with
  $\E \eabs{Y_t}^3$ finite for all $t \in \N$. Let
  ${S_t \defineas \gamma_{\max}(Y_t)}$ and
  ${V_t \defineas \gamma_{\max}([Y]_t + \sum_{i=1}^t \E_{i-1} (\Delta
    Y_i)_-^3)}$. Then for $\slope_G$ and $\psi^\star_G$ using $c = 1/6$, we have for any $x, m > 0$,
\begin{align}
\P\eparen{\exists t \in \N: S_t \geq x + \slope_G\pfrac{x}{m} \cdot (V_t - m)}
  &\leq d \expebrace{-m \psi^\star_G\pfrac{x}{m}} \\
  &\leq d \expebrace{-\frac{x^2}{2(m + x/6)}}. \label{eq:third_moment_bound}
\end{align}
This is a uniform alternative
to Corollary 2.2 of \citet{fan_exponential_2015} [B,D].
\end{corollary}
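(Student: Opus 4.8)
The plan is to read \cref{th:fan_uniform} as the composition of one sufficient condition with one instantiation of the master theorem, so that essentially no new work is required once the right black boxes are lined up. First I would invoke \cref{th:novel_discrete_conds}(i) to place the pair $(S_t, V_t)$ in $\subpsiclass$ for $\psi = \psi_{G,1/6}$ with $l_0 = d$: under the hypothesis $\E\eabs{Y_t}^3 < \infty$, the process $S_t = \gamma_{\max}(Y_t)$ is $d$-sub-gamma with scale parameter $c = 1/6$ and an accumulated-variance process built from $[Y]_t$ together with a conditional third-moment correction. Second, with $(S_t,V_t)$ sub-$\psi_{G,1/6}$ in hand, I would apply \cref{th:chernoff_cases}(c) with $l_0 = d$ and $c = 1/6$. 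Its conclusion is verbatim the displayed pair of inequalities: the bound $d\expebrace{-m\psi^\star_G\pfrac{x}{m}}$ is exactly \cref{th:uniform_chernoff}(b) specialized to $\psi_{G,1/6}$, with the optimal boundary slope $\slope_G(x/m)$ already chosen, and the looser bound $d\expebrace{-x^2/[2(m+x/6)]}$ follows from the elementary estimate $m\psi^\star_G\pfrac{x}{m} \ge x^2/[2(m+cx)]$ recorded inside \cref{th:chernoff_cases}(c), using $\sqrt{1 + 2cx/m} \le 1 + cx/m$. No fresh optimization over $\lambda$ or over the slope is needed.

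The single point that genuinely requires care — and which I expect to be the main obstacle — is reconciling the variance process. As summarized in the tables, \cref{th:novel_discrete_conds}(i) carries the two-sided correction $\sum_{i=1}^t \E_{i-1}\eabs{\Delta Y_i}^3$, whereas \cref{th:fan_uniform} uses the strictly smaller one-sided correction $\sum_{i=1}^t \E_{i-1}(\Delta Y_i)_-^3$, with $(\Delta Y_i)_-^3 \preceq \eabs{\Delta Y_i}^3$. Because shrinking $V_t$ \emph{strengthens} rather than weakens the sub-$\psi$ claim, this sharper form cannot be obtained from the tabulated statement by the monotonicity remark preceding \cref{th:psi_relations}; instead it must be read off from the pointwise inequality that drives the proof of \cref{th:novel_discrete_conds}(i). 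Concretely, the crux is the scalar estimate $e^{\lambda x - \psi_{G,1/6}(\lambda) x^2} \le 1 + \lambda x + \psi_{G,1/6}(\lambda)(x_-)^3$ for all $x \in \R$ and $\lambda \in [0, 6)$, where $x_- = \max(0,-x)$, which encodes the fact that only \emph{downward} increments inflate the upper-tail moment generating function. Taking $\E_{t-1}(\cdot)$, using the martingale identity $\E_{t-1}\Delta S_t = 0$, and pulling the $\Fcal_{t-1}$-measurable factor $\expebrace{-\psi(\lambda)\E_{t-1}(\Delta S_t)_-^3}$ outside then yields $\E_{t-1}\expebrace{\lambda \Delta S_t - \psi(\lambda)\Delta V_t} \le 1$, the scalar supermartingale property.

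Lifting this from scalars to $\Hcal^d$ is handled exactly as in \cref{th:canonical_assumption}: following Tropp's trace-exponential argument, $\expebrace{\lambda S_t - \psi_{G,1/6}(\lambda)V_t}$ is dominated by $\tr\expebrace{\lambda Y_t - \psi_{G,1/6}(\lambda) Z_t}$ with $Z_t = [Y]_t + \sum_{i=1}^t \E_{i-1}(\Delta Y_i)_-^3$, a nonnegative supermartingale with initial value $d$, which is what supplies the prefactor $l_0 = d$. This step is again internal to the proof of \cref{th:novel_discrete_conds}(i), so in the write-up I would simply record that that proof delivers the negative-part variance process and then feed it into \cref{th:chernoff_cases}(c). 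I would double-check only that $\psi_{G,1/6}$ is CGF-like (strictly convex, twice differentiable, with $\psi(0)=\psi'(0_+)=0$ and $\sup\psi=\infty$, hence $\bar b=\infty$), so that \cref{th:chernoff_cases}(c) applies for every $x,m>0$.

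Finally, I would state the nature of the improvement over the cited fixed-sample result. Relative to Fan's Corollary~2.2, \cref{th:fan_uniform} is the boundary strengthening [B], since the bound now holds uniformly over all $t \in \N$ rather than at a single time, together with the dimension extension [D], since it covers $\Hcal^d$-valued martingales and recovers Fan's scalar bound at $d = 1$ and $t = m$. This is precisely what composing the uniform master theorem \cref{th:uniform_chernoff} with the matrix sufficient condition \cref{th:novel_discrete_conds}(i) buys us over the original argument.
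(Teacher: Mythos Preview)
Your proposal is correct and matches the paper's approach: the corollary is obtained by combining \cref{th:novel_discrete_conds}(i) with \cref{th:uniform_chernoff}(b), equivalently \cref{th:chernoff_cases}(c), exactly as you describe. You are also right to flag the variance-process discrepancy: the stated \cref{th:novel_discrete_conds}(i) uses $\sum_i \E_{i-1}\eabs{\Delta Y_i}^3$, but its proof in \cref{sec:proof_novel_discrete_conds} actually establishes the sharper bound with $\E_{i-1}(\Delta Y_i)_-^3$ (via $e^{x-x^2/2}\le 1+x+x_-^3/3$), which is precisely what \cref{th:fan_uniform} requires.
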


Note the exponent in \eqref{eq:third_moment_bound} is different from that in
\citet{fan_exponential_2015}; neither strictly dominates the other. Also
note that, unlike the classical Bernstein bound, neither of
\cref{th:delyon_uniform,th:fan_uniform} assume existence of moments of all
orders.

\subsection{Martingales in smooth Banach spaces}\label{sec:banach}

The applications presented thus far allow us to uniformly bound the operator
norm deviations of a sequence of random Hermitian matrices. A different approach
is due to \citet{pinelis_approach_1992, pinelis_optimum_1994}, who gave an
innovative approach to exponential tail bounds in abstract Banach spaces. We
describe how this approach can be incorporated into our framework. For this
section, let $(Y_t)_{t \in \N}$ be a martingale with respect to $(\Fcal_t)$
taking values in a separable Banach space $(\Xcal, \norm{\cdot})$. We can use
Pinelis's device to uniformly bound the process $(\Psi(Y_t))$ for any function
$\Psi : \Xcal \to \R$ which satisfies the following smoothness property:
\begin{definition}[\citealp{pinelis_optimum_1994}]
  A function $\Psi : \Xcal \to \R$ is called \emph{$(2,D)$-smooth} for some
  $D > 0$ if, for all $x, v \in \Xcal$, we have
\begin{subequations}
\begin{align}
\Psi(0) &= 0 \label{eq:d_smooth_1} \\
\abs{\Psi(x + v) - \Psi(x)} &\leq \norm{v} \label{eq:d_smooth_2} \\
\Psi^2(x + v) - 2 \Psi^2(x) + \Psi^2(x - v)
  &\leq 2D^2 \norm{v}^2. \label{eq:d_smooth_3}
\end{align}
\end{subequations}
\end{definition}
A Banach space is called $(2,D)$-smooth if its norm is $(2,D)$-smooth; in such a
space we may take $\Psi(\cdot) = \norm{\cdot}$ to uniformly bound the deviations
of a martingale. In this case, observe that property \eqref{eq:d_smooth_1} is
part of the definition of a norm, property \eqref{eq:d_smooth_2} is the triangle
inequality, and property \eqref{eq:d_smooth_3} can be seen to hold with $D = 1$
for the norm induced by the inner product in any Hilbert space, regardless of
the (possibly infinite) dimensionality of the space. Note also that setting
$x = 0$ shows that $D \geq 1$ whenever $\Psi(\cdot) = \norm{\cdot}$. Finally,
observe that if we write $f(x) = \Psi^2(x)$, then we may equivalently replace
condition \eqref{eq:d_smooth_3} by
$f(tx + (1-t)y) \geq tf(x) + (1-t)f(y) - D^2 t (1 - t) \norm{x - y}^2$, a
perhaps more familiar definition of smoothness.

\begin{corollary}\label{th:pinelis}
  Consider a martingale $(Y_t)_{t \in \N}$ taking values in a separable Banach
  space $(\Xcal, \norm{\cdot})$. Let the function $\Psi : \Xcal \to \R$ be
  $(2,D)$-smooth and define $D_\star \defineas 1 \bmax D$.
\begin{enumerate}[label=(\alph*)]
\item Suppose $\norm{\Delta Y_t} \leq c_t$ a.s.\ for all $t \in \N$ for some
  constants $(c_t)_{t \in \N}$, and let $V_t \defineas \sum_{i=1}^t c_i^2$. Then
  for any $x, m > 0$, we have
  \begin{align}
    \P\eparen{\exists t \in \N:
              \Psi(Y_t) \geq x + \frac{x}{2m}(V_t - m)}
      \leq 2 \expebrace{-\frac{x^2}{2 D_\star^2 m}}.
      \label{eq:pinelis_hoeffding}
  \end{align}
  This strengthens Theorem 3.5 from \citet{pinelis_optimum_1994} [B].
\item Suppose $\norm{\Delta Y_t} \leq c$ a.s.\ for all $t \in \N$ for some
  constant $c$, and let
  $V_t \defineas \sum_{i=1}^t \E_{i-1} \norm{\Delta Y_i}^2$. Then for any
  $x, m > 0$, we have
  \begin{multline}
    \P\eparen{
      \exists t \in \N :
      \Psi(Y_t) \geq x + D_\star^2 \slope_P\pfrac{x}{D_\star^2 m} \cdot (V_t - m)
    } \\
      \leq 2 \expebrace{-D_\star^2 m \psi^\star_P\pfrac{x}{D_\star^2 m}}
      \leq 2 \expebrace{-\frac{x^2}{2(D_\star^2 m + c x / 3)}}.
        \label{eq:pinelis_bennett}
  \end{multline}
  This strengthens Theorem 3.4 from \citet{pinelis_optimum_1994} [B].
\end{enumerate}
\end{corollary}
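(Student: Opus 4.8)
The plan is to reduce both parts to the sub-$\psi$ framework and then invoke \cref{th:uniform_chernoff}(b). Concretely, for part (a) I would show that the scalar process $(\Psi(Y_t))_{t\in\N}$ is $2$-sub-$\psi_N$ (in the sense of \cref{th:canonical_assumption}, with $l_0 = 2$) with variance process $\hat V_t \defineas D_\star^2 \sum_{i=1}^t c_i^2 = D_\star^2 V_t$, and for part (b) that it is $2$-sub-$\psi_{P,c}$ (sub-Poisson with scale $c$) with variance process $\hat V_t \defineas D_\star^2 \sum_{i=1}^t \E_{i-1}\norm{\Delta Y_i}^2 = D_\star^2 V_t$. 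Granting these, \cref{th:uniform_chernoff}(b) applied with parameter $D_\star^2 m$ yields the stated linear boundaries together with the probability bounds $2\expebrace{-D_\star^2 m\, \psi^\star_N(x/D_\star^2 m)} = 2\expebrace{-x^2/(2D_\star^2 m)}$ in part (a) and $2\expebrace{-D_\star^2 m\, \psi^\star_P(x/D_\star^2 m)}$ in part (b); the final simplified form in \eqref{eq:pinelis_bennett} then follows from the elementary estimate $\psi^\star_P(u) \ge u^2/[2(1 + cu/3)]$ recorded in \cref{tab:psi_transforms}.

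The engine behind both sub-$\psi$ claims is Pinelis's device. For each $\lambda \ge 0$ I would use the symmetrized exponential $L_t(\lambda) \defineas 2\cosh(\lambda \Psi(Y_t)) \expebrace{-\psi(\lambda)\hat V_t}$, which dominates $\expebrace{\lambda \Psi(Y_t) - \psi(\lambda)\hat V_t}$ pointwise (since $e^u \le 2\cosh u$) and has initial value $L_0(\lambda) = 2\cosh(0) = 2 = l_0$, using $\Psi(0) = 0$ from \eqref{eq:d_smooth_1} and $\hat V_0 = 0$. It therefore suffices to verify that $(L_t(\lambda))$ is a nonnegative supermartingale. After conditioning, and using $\hat V_t = \hat V_{t-1} + D_\star^2 \E_{t-1}\norm{\Delta Y_t}^2$ (with $\E_{t-1}\norm{\Delta Y_t}^2 \le c_t^2$ in part (a)), this reduces to the one-step comparison
\[
\E_{t-1}\cosh(\lambda \Psi(Y_t)) \le \cosh(\lambda \Psi(Y_{t-1}))\, \expebrace{\psi(\lambda)\, D_\star^2\, \E_{t-1}\norm{\Delta Y_t}^2},
\]
with $\psi = \psi_N$ in part (a) and $\psi = \psi_{P,c}$ in part (b).

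Establishing this cosh-comparison is the main obstacle, and it is where the $(2,D)$-smoothness is consumed. I would write $Y_t = Y_{t-1} + \Delta Y_t$ with $\E_{t-1}\Delta Y_t = 0$, expand $\cosh(\lambda \Psi(Y_{t-1} + \Delta Y_t))$, and separate a first-order (odd) part from a second-order (curvature) part. The Lipschitz bound \eqref{eq:d_smooth_2} controls increments of $\Psi$ by $\norm{\Delta Y_t}$ — this is what keeps the exponential scale equal to $c$ rather than $D_\star c$ in the sub-Poisson case — while the quadratic smoothness \eqref{eq:d_smooth_3}, applied to $\Psi^2$, supplies the curvature bound carrying the factor $D_\star^2$ that multiplies the second moment. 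The delicate point is that $\Psi$ is merely Lipschitz, not linear, so the first-order term does not vanish termwise under $\E_{t-1}$; Pinelis's argument circumvents this by pairing the $+\Delta Y_t$ and $-\Delta Y_t$ contributions (exploiting that $\cosh$ is even and convex) and bounding the symmetrized combination via \eqref{eq:d_smooth_3}, after which the martingale property kills the residual linear term. I would isolate this as a standalone lemma.

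With the lemma in hand, closing each case is routine. In part (a) I would finish using $\cosh u \le \expebrace{u^2/2}$ and $\E_{t-1}\norm{\Delta Y_t}^2 \le c_t^2$ to obtain the $\psi_N$ cumulant; in part (b) I would use that $w \mapsto (\cosh(\lambda w) - 1)/w^2$ is nondecreasing to replace $\norm{\Delta Y_t}$ by its bound $c$ in the scale, together with $\cosh\mu - 1 \le e^\mu - \mu - 1$ to recognize the sub-Poisson cumulant $\psi_{P,c}(\lambda)$. Both conclusions then feed directly into \cref{th:uniform_chernoff}(b) (via \cref{th:ville}) as described, so no further difficulty arises beyond the smoothness lemma itself.
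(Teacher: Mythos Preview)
Your proposal is correct and follows essentially the same route as the paper: both verify \cref{th:canonical_assumption} with $l_0=2$ by taking $L_t(\lambda)$ proportional to $\cosh(\lambda\Psi(Y_t))\,e^{-\psi(\lambda)\hat V_t}$, invoke Pinelis's one-step cosh comparison (the paper simply cites \citet{pinelis_approach_1992,pinelis_optimum_1994} for this, rather than re-deriving it as you sketch), and then apply \cref{th:uniform_chernoff}(b). The only minor discrepancy is that for part~(b) the paper passes through Pinelis's bound $\E_{t-1}[e^{\lambda\norm{\Delta Y_t}}-\lambda\norm{\Delta Y_t}-1]$ and uses monotonicity of $c\mapsto(e^{c\lambda}-c\lambda-1)/c^2$, whereas you route through $\cosh(\lambda\norm{\Delta Y_t})-1$; both lead to the same $\psi_{P,c}$ conclusion.
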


We prove this result in \cref{sec:proof_pinelis}. As before, the Hoeffding-style
bound in part (a) and the Bennett-style bound in part (b) are not directly
comparable: $V_t$ may be smaller in part (b), but the exponent is also smaller.

We briefly highlight some of the strengths and limitations of this
approach. Since the Euclidean $l_2$-norm is induced by the standard inner
product in $\R^d$, \cref{th:pinelis} gives a dimension-free uniform bound on the
$l_2$-norm deviations of a vector-valued martingale in $\R^d$ which exactly
matches the form for scalars. Compare this to bounds based on the operator norm
of a Hermitian dilation: the bound of \citet{tropp_user-friendly_2012}
includes dimension dependence [B,E] while the bound of \citet[Corollary
4.1]{minsker_extensions_2017} incurs an extra constant factor of 14 [B,E]. Our
bounds extend to martingales taking values in sequence space
$\ebrace{(a_i)_{i \in \N} : \sum_i \abs{a_i}^2 < \infty}$ or function space
$L^2[0,1]$, and we may instead use the $l_p$ norm, $p \geq 2$, in which case
$D = \sqrt{p - 1}$. These cases follow from \citet[Proposition
2.1]{pinelis_optimum_1994}.

Similarly, \cref{th:pinelis} gives dimension-free uniform bounds for the
Frobenius-norm deviations of a matrix-valued martingale. This extends to
martingales taking values in a space of Hilbert-Schmidt operators on a separable
Hilbert space, with deviations bounded in the Hilbert-Schmidt norm; compare
\citet[\S 3.2]{minsker_extensions_2017}, which gives operator-norm bounds. The
method of \cref{th:pinelis} does not extend directly to operator-norm bounds
because the operator norm is not $(2,D)$-smooth for any $D$: for a simple
illustration in $\Hcal^2$, consider $x = a I_2$ and $v = \diag\brace{b, -b}$, so
that $\opnorm{x + v}^2 + \opnorm{x - v}^2 - 2 \opnorm{x}^2 = 2b^2 + 4ab$ and
condition \eqref{eq:d_smooth_3} cannot be satisfied. However, \cref{th:pinelis}
does apply to the matrix Schatten $p$-norm for $p < \infty$, using
$D = \sqrt{p-1}$, and this holds for rectangular matrices as well
\citep{ball_sharp_1994}.

\subsection{Continuous-time processes}

While
\cref{th:chernoff_cases,th:gamma_line_crossing,th:variance_normalized,th:ratio_subg}
already generalize results known in discrete time to new results for
continuous-time martingales [C], here we summarize a few more useful bounds
explicitly for continuous-time processes which follow from
\cref{th:uniform_chernoff} and the conditions of \cref{th:continuous_facts},
making use of the novel strategies devised by \citet{shorack_empirical_1986} and
\citet{van_de_geer_exponential_1995}. These results use the conditional
quadratic variation $\eangle{S}_t$. We remind the reader that
$[S]_t = \eangle{S}_t = t$ for Brownian motion, and the first equality holds
more generally for martingales with continuous paths, while for a Poisson
process with rate one, $\eangle{S}_t = t$ but $[S]_t = S_t$.

\begin{corollary}\label{th:continuous_bounds}
Let $(S_t)_{t \in (0,\infty)}$ be a real-valued process.
\begin{enumerate}[label=(\alph*)]
\item If $(S_t)$ is a locally square-integrable martingale with a.s.\ continuous
  paths, then for any $a, b > 0$, we have
  \begin{align}
    \P\eparen{\exists t \in (0, \infty): S_t \geq a + b \eangle{S}_t}
    \leq e^{-2ab}.
  \end{align}
  If $\eangle{S}_t \uparrow \infty$ as $t \uparrow \infty$, then the probability
  upper bound holds with equality. This recovers as a special case the standard
  line-crossing probability for Brownian motion (e.g.,
  \citealp{durrett_probability:_2017}, Exercise 7.5.2).
\item If $(S_t)$ is a local martingale with $\Delta S_t \leq c$ for all $t$,
  then for any $x, m > 0$, we have
  \begin{multline}
    \P\eparen{
      \exists t \in (0, \infty) : S_t \geq
      x + \slope_P\pfrac{x}{m} \cdot (\eangle{S}_t - m)
    } \\ \leq \expebrace{-m \psi^\star_P\pfrac{x}{m}}
      \leq \expebrace{-\frac{x^2}{2(m + cx/3)}}.
      \label{eq:continuous_bennett_bound}
  \end{multline}
  This strengthens Appendix B, Inequality 1 of \citet{shorack_empirical_1986}
  [B].
\item If $(S_t)$ is any locally square-integrable martingale satisfying the
  Bernstein condition of \cref{th:continuous_facts}(c) for some predictable
  process $(V_t)$, then for any $x, m > 0$, we have
  \begin{multline}
    \P\eparen{
      \exists t \in (0, \infty) : S_t \geq
      x + \slope_G\pfrac{x}{m} \cdot (V_t - m)
    } \\ \leq \expebrace{-m \psi^\star_G\pfrac{x}{m}}
      \leq \expebrace{-\frac{x^2}{2(m + cx)}}.
  \end{multline}
  This strengthens Lemma 2.2 of \citet{van_de_geer_exponential_1995} [B,E].
\end{enumerate}
\end{corollary}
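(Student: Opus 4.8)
The plan is to derive each of the three parts by combining the corresponding continuous-time sufficient condition from \cref{th:continuous_facts} with the relevant conclusion of \cref{th:uniform_chernoff}, taking $l_0 = 1$ throughout. Each $\psi$ that we use ($\psi_N$, $\psi_{P,c}$, and $\psi_{G,c}$ with $c > 0$) is CGF-like with $\bar{b} = \infty$, so the boundaries are well defined and the domain $x \in (0, m\bar{b}) = (0, \infty)$ in \cref{th:uniform_chernoff}(b) imposes no restriction.

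For part (a), \cref{th:continuous_facts}(d) tells us that a local martingale with a.s.\ continuous paths is sub-$\psi_N$ with variance process $V_t = \eangle{S}_t$. Reading $\decay_N(b) = 2b$ from \cref{tab:psi_transforms} and feeding this into \cref{th:uniform_chernoff}(a) gives the upper bound $\P(\exists t: S_t \geq a + b\eangle{S}_t) \leq \expebrace{-a \decay_N(b)} = e^{-2ab}$ at once. The equality assertion when $\eangle{S}_t \uparrow \infty$ is, I expect, the one genuinely new ingredient and the main obstacle, since \cref{th:uniform_chernoff} supplies only an upper bound. Here the plan is a time-change argument: by the Dambis--Dubins--Schwarz theorem (see, e.g., \citet{protter_stochastic_2005}), a continuous local martingale with $S_0 = 0$ and $\eangle{S}_\infty = \infty$ can be written as $S_t = B_{\eangle{S}_t}$ for a standard Brownian motion $B$. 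Because $\eangle{S}$ is continuous and nondecreasing with $\eangle{S}_0 = 0$ and $\eangle{S}_t \uparrow \infty$, every level $u > 0$ is attained as $u = \eangle{S}_t$ for some $t$, with $S_t = B_u$; hence the events $\{\exists t \in (0,\infty): S_t \geq a + b\eangle{S}_t\}$ and $\{\exists u > 0: B_u \geq a + bu\}$ coincide. The latter has probability exactly $e^{-2ab}$ by the textbook Brownian line-crossing identity (\cref{th:intro_example}(f)), which yields the equality. The flat stretches of $\eangle{S}$ (on which $S$ is constant) and the endpoints $u = 0, \infty$ require only a quick, routine check that no crossing is lost or created.

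For parts (b) and (c), I would invoke \cref{th:continuous_facts}(b) and (c) respectively to obtain that $(S_t)$ is sub-$\psi_{P,c}$ (with $V_t = \eangle{S}_t$) and sub-$\psi_{G,c}$ (with the stated $V_t$), and then apply \cref{th:uniform_chernoff}(b) with $\psi = \psi_{P,c}$ and $\psi = \psi_{G,c}$; this directly produces the stated boundaries $x + \slope_P(x/m)(\eangle{S}_t - m)$ and $x + \slope_G(x/m)(V_t - m)$ together with the bounds $\expebrace{-m\psi^\star_P(x/m)}$ and $\expebrace{-m\psi^\star_G(x/m)}$. The final, analytically simpler inequalities are then routine consequences of the tabulated transforms. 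For (c), $\psi^\star_G(u) \geq u^2 / [2(1+cu)]$ follows from the closed form in \cref{tab:psi_transforms} together with $\sqrt{1+2cu} \leq 1 + cu$, giving $m\psi^\star_G(x/m) \geq x^2/[2(m+cx)]$ exactly as in \cref{th:chernoff_cases}(c). For (b), the comparison $\psi_{P,c} \leq \psi_{G,c/3}$ (row (5) of \cref{tab:psi_relations}, whence $\psi^\star_P \geq \psi^\star_{G,c/3}$) combined with the same sub-gamma bound at scale $c/3$ yields $m\psi^\star_P(x/m) \geq x^2/[2(m + cx/3)]$, completing the proof.
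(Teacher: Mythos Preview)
Your proposal is correct and matches the paper's approach: the paper leaves this corollary's proof implicit, noting only that such results follow by combining the sufficient conditions of \cref{th:continuous_facts} with \cref{th:uniform_chernoff}, and your plan carries this out exactly as intended (with \cref{th:continuous_facts}(d),(b),(c) feeding into \cref{th:uniform_chernoff}(a),(b),(b), respectively, and the secondary inequalities coming from \cref{th:psi_relations} part (5) and the explicit $\psi_G^\star$). For the equality in part (a), the paper gives no explicit argument; your direct Dambis--Dubins--Schwarz reduction to the Brownian line-crossing identity is correct and in fact slightly cleaner than the paper's related machinery in \cref{th:sandwich}/\cref{th:continuous_martingale}, which routes through the exponential martingale and therefore imposes Kazamaki's criterion, whereas your argument needs only $\eangle{S}_\infty = \infty$.
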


Clearly, \cref{th:continuous_bounds}(b) applies to centered Poisson processes
with ${c = 1}$. Of course, one can also apply \cref{th:continuous_facts}(a) for
general L\'evy processes, obtaining the same bound
\eqref{eq:continuous_bennett_bound}. The point of \cref{th:continuous_bounds}(b)
is that any local martingale with bounded jumps obeys this inequality, and so
concentrates like a centered Poisson process in this sense. \Citet[\S
4]{barlow_inequalities_1986} describe further exponential supermartingales
obtained for continuous-time processes using the quadratic variation, and derive
``Freedman-style'' self-normalized bounds; incorporating these cases into our
framework would be interesting future work.

\subsection{Exponential families and the sequential probability ratio test}
\label{sec:expo_family}

It is well known that the likelihood ratio $f_{1,t}(X_1^t) / f_{0,t}(X_1^t)$ is
a martingale under the null hypothesis that $X_1^t \sim f_{0,t}$. Then Ville's
inequality gives a sequential test with valid type I error, equivalent to an
open-ended sequential probability ratio test (SPRT,
\citealp{wald_sequential_1945}), in which we stop when the likelihood ratio
exceeds an upper threshold, but not when it drops below any lower threshold. In
the one-parameter exponential family case, we obtain a simple analytical result
which is equivalent to \cref{th:uniform_chernoff}, as we detail below.

Suppose $(X_t)_{t \in \N}$ are i.i.d.\ from a one-parameter exponential family
with natural parameter $\theta$ and log-partition function $A$, so that $X_t$
has density $f_\theta(x) = h(x) \expebrace{\theta T(x) - A(\theta)}$. Let
$S_t = \sum_{i=1}^t T(X_i)$. An open-ended SPRT testing $H_0: \theta = \theta_0$
against $H_1: \theta = \theta_0 + \lambda$ stops to reject $H_0$ as soon as the
likelihood ratio
$L_t = \expebrace{\lambda S_t - [A(\theta_0 + \lambda) - A(\theta_0)] t}$
exceeds the threshold $\alpha^{-1} > 1$.
\begin{corollary}\label{th:sprt_summary}
This one-sided SPRT has type I error rate no greater than $\alpha$:
$\P_{\theta_0}(\exists t \in \N: L_t \geq \alpha^{-1}) \leq \alpha$.
\end{corollary}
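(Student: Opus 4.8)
The plan is to recognize $(L_t)_{t\in\N}$ as a nonnegative martingale under $\P_{\theta_0}$ with unit initial value, and then read off the conclusion from Ville's inequality (\cref{th:ville}); in other words, this corollary is essentially \cref{th:ville} repackaged in exponential-family language, so almost all of the work lies in verifying the martingale property.

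First I would factor $L_t = \prod_{i=1}^t M_i$, where $M_i \defineas \expebrace{\lambda T(X_i) - [A(\theta_0 + \lambda) - A(\theta_0)]}$, noting that $L_0 = 1$. Since the $X_i$ are i.i.d.\ and each $M_i$ is $\Fcal_i$-measurable, it suffices to show $\E_{\theta_0} M_i = 1$, after which $\E_{\theta_0}\condparen{L_t}{\Fcal_{t-1}} = L_{t-1}\,\E_{\theta_0} M_t = L_{t-1}$ identifies $(L_t)$ as a martingale. That each factor has unit mean is the standard exponential-family moment-generating identity $\E_{\theta_0} e^{\lambda T(X)} = \expebrace{A(\theta_0 + \lambda) - A(\theta_0)}$, which I would obtain by writing $\E_{\theta_0} e^{\lambda T(X)} = \int h(x)\expebrace{(\theta_0+\lambda)T(x) - A(\theta_0)}\,\d x$ and recognizing the integrand as $\expebrace{A(\theta_0+\lambda)-A(\theta_0)}\, f_{\theta_0+\lambda}(x)$, whose integral is $\expebrace{A(\theta_0+\lambda)-A(\theta_0)}$ because $\int f_{\theta_0+\lambda}(x)\,\d x = 1$.

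With $(L_t)$ a nonnegative martingale (hence supermartingale) and $L_0 = 1$, applying \cref{th:ville} with $a = \alpha^{-1}$ gives $\P_{\theta_0}(\exists t \in \N : L_t \geq \alpha^{-1}) \leq L_0/\alpha^{-1} = \alpha$, which is exactly the claim. Equivalently, I could bypass the explicit verification altogether by noting that $L_t$ is precisely the exponential process $\expebrace{\lambda \Xtw_t - \psi(\lambda) t}$ for the centered statistic $\Xtw_t \defineas \sum_{i=1}^t (T(X_i) - A'(\theta_0))$ and the shifted cumulant function $\psi(\lambda) = A(\theta_0+\lambda) - A(\theta_0) - \lambda A'(\theta_0)$; thus $(\Xtw_t)$ is $1$-sub-$\psi$ with $V_t = t$ and $l_0 = 1$, and the bound is the $z = \log\alpha^{-1}$ instance of inequality~\eqref{eq:simple_linear_bound} from the proof of \cref{th:uniform_chernoff}. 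I do not anticipate any genuine obstacle: the lone substantive step is the exponential-family identity above, which is routine, and Ville's inequality supplies the time-uniformity for free.
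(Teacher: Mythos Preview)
Your proposal is correct and essentially matches the paper's argument: the paper observes that $L_t \geq \alpha^{-1}$ is equivalent to $S_t$ crossing the line $(\log\alpha^{-1})/\lambda + (\psi(\lambda)/\lambda)\, t$ with $\psi(\lambda) = A(\theta_0+\lambda) - A(\theta_0)$ and invokes \cref{th:uniform_chernoff}, but since \cref{th:uniform_chernoff}(a) is itself just Ville's inequality applied to the exponential process (here equal to $L_t$) plus an optimization over $\lambda$ that is vacuous when $\lambda$ is fixed, your direct application of \cref{th:ville} to the unit-mean likelihood-ratio martingale is the same argument with one layer of indirection removed. Your alternative phrasing via the centered sum $\Xtw_t$ and \eqref{eq:simple_linear_bound} makes this identification explicit.
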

This standard fact follows easily from \cref{th:uniform_chernoff} because
$L_t \geq A$ if and only if
$S_t \geq (\log A) / \lambda + \psi(\lambda) t / \lambda$, where
$\psi(\lambda) = A(\theta_0 + \lambda) - A(\theta_0)$, the CGF of $T(X_i)$ at
$\theta = \theta_0$. Hence the rejection boundary for the SPRT is equivalent to
the linear boundary of \cref{th:uniform_chernoff}. In light of this, we may
interpret the above sub-Gaussian, sub-Poisson, sub-exponential and sub-Bernoulli
bounds as open-ended SPRTs for i.i.d.\ observations from these exponential
families. The fact that such tests are also valid for testing various
nonparametric classes of distributions, as outlined in
\cref{sec:sufficient_conditions}, illustrates how our framework provides
nonparametric generalizations of the SPRT. For example, if one wants to test the
mean of a bounded distribution, our framework suggests that one apply an SPRT
for Bernoulli or Poisson observations, for example. It has long been known that
the normal SPRT bound can be applied to sequential problems involving any
i.i.d.\ sequence of sub-Gaussian observations \citep{darling_iterated_1967,
  robbins_statistical_1970}. Our work expands the breadth of nonparametric
sequential problems amenable to such methods and deepens the connection between
exponential concentration inequalities and sequential testing procedures.

\section{Discussion and extensions}\label{sec:discussion}

This section is divided into three parts. We first discuss the sharpness of the
derived bounds. Then, building further on the geometric intuition of the paper,
we point out an interesting geometric relationship between fixed-sample
exponential bounds and our uniform bounds. We end by discussing directions for
future work.

\subsection{When is \cref{th:uniform_chernoff} sharp?}

In the discrete-time, sub-Gaussian case $\psi = \psi_N$ and $l_0 = 1$,
\cref{th:uniform_chernoff}(a) is sharp: for any $a,b > 0$,
\begin{align}
  \sup_{(S_t,V_t) \in \mathbb{S}^1_{\psi_N}}
    \P\condparen{\exists t \in \N: S_t \geq a + b V_t}{\Fcal_0}
    = e^{-2ab}.
\end{align}
In fact, this can be achieved by rescaling any sum of i.i.d.\ observations with
finite variance, which we prove in \cref{sec:proof_iid_invariance} as a
corollary of Theorem 2 of \citet{robbins_boundary_1970}:
\begin{corollary}\label{th:iid_invariance}
  Suppose $(X_t)_{t \in \N}$ are i.i.d.\ mean zero with variance
  $\sigma^2 < \infty$. Let $S_t = \sum_{i=1}^t X_i$. Let
  $S^{(m)}_t \defineas S_t / \sqrt{m}$ and $V^{(m)}_t \defineas t \sigma^2 /
  m$. Then for any $a, b > 0$,
\begin{align}
  \lim_{m \to \infty}
    \P\eparen{\exists t \in \N: S^{(m)}_t \geq a + b V^{(m)}_t} = e^{-2ab}.
  \label{eq:invariance}
\end{align}
\end{corollary}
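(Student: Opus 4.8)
The plan is to unwind the rescaling so that the event becomes a linear boundary-crossing event for the \emph{unscaled} walk $(S_t)$, and then to invoke the invariance theorem of \citet{robbins_boundary_1970}, which asserts that boundary-crossing probabilities for sample sums converge to the corresponding Wiener-process probabilities as the boundary recedes. Concretely, for fixed $m$ the crossing event satisfies
\[
  \brace{S^{(m)}_t \geq a + b V^{(m)}_t}
  = \brace{S_t \geq a\sqrt{m} + \tfrac{b\sigma^2}{\sqrt{m}}\, t},
\]
so after passing to the unit-variance walk $S'_t \defineas S_t / \sigma$ we are looking at crossings of the line with intercept $c_m \defineas (a/\sigma)\sqrt{m}$ and slope $\beta_m \defineas b\sigma/\sqrt{m}$. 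The crucial feature of this scaling is that $c_m \to \infty$ and $\beta_m \to 0$ while the product $c_m \beta_m = ab$ stays fixed.

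The target limit $e^{-2ab}$ is exactly the Wiener-process value. Under the diffusive scaling $n = \lfloor m s\rfloor$, Donsker's theorem gives $S'_{\lfloor m s\rfloor}/\sqrt{m} \Rightarrow W_s$ for a standard Brownian motion $W$, and dividing the boundary condition $S'_n \geq c_m + \beta_m n$ by $\sqrt{m}$ turns it into $W_s \geq (a/\sigma) + (b\sigma)\, s$, whose upcrossing probability is $\expebrace{-2 (a/\sigma)(b\sigma)} = e^{-2ab}$ by the standard Brownian line-crossing identity (\cref{ex:bm}). Thus each of the approximating problems has the correct shape, and the content of the corollary is that these discrete probabilities actually converge to the continuous one.

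The step I expect to be the main obstacle, and the reason a bare Donsker argument does not suffice, is the uniformity over \emph{unbounded} time: the functional CLT controls the partial-sum process only on compact intervals $s \in [0,T]$, whereas the event in \eqref{eq:invariance} involves all $t \in \N$, i.e.\ $s$ ranging over $[0,\infty)$. One must therefore show that the probability of a first crossing at very large intrinsic time is asymptotically negligible and matches the Brownian tail. This tail control, under nothing more than the finite-variance hypothesis, is precisely what Theorem 2 of \citet{robbins_boundary_1970} supplies for linear boundaries in the regime $c_m \to \infty$, $\beta_m \to 0$, $c_m \beta_m$ fixed; applying it with $c_m \beta_m = ab$ gives $\lim_{m\to\infty}\P(\exists t \in \N: S'_t \geq c_m + \beta_m t) = e^{-2ab}$, which is the claim.
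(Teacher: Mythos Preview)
Your proposal is correct and follows essentially the same route as the paper: both rewrite the rescaled event as a linear boundary crossing for the unit-variance walk $S_t/\sigma$, invoke Theorem~2 of \citet{robbins_boundary_1970} to pass to the Brownian limit, and then read off $e^{-2ab}$ from the standard line-crossing formula for Brownian motion. Your added remarks on why a bare Donsker argument is insufficient are sound but purely expository; the mathematical core matches the paper's proof.
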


The following more general sandwich relation, which we prove in
\cref{sec:proof_sandwich}, quantifies the looseness in
\cref{th:uniform_chernoff}(a) and gives a sufficient condition for the
probability bound to be exact. This condition involves the ``overshoot'' of the
process $S_t$ over the line $a + b V_t$, a quantity which has been studied
extensively in the context of sequential testing
\citep{siegmund_sequential_1985}. The upper bound in equation
\eqref{eq:sandwich} below is a restatement of \cref{th:uniform_chernoff}(a);
only the lower bound is new.

\begin{proposition}\label{th:sandwich}
  Consider real-valued processes $(S_t)$, $(V_t)$ and a CGF-like function
  $\psi$. Fix $a > 0, b \in (0, \bar{b})$ and suppose
\begin{enumerate}
\item $M_t \defineas \expebrace{\decay(b) S_t - \psi(\decay(b)) V_t}$ is a
  martingale with $M_0 \equiv 1$ (rather than just upper bounded by a
  supermartingale, as \cref{th:canonical_assumption} requires),
\item $S_t - b V_t \to -\infty$ as $t \uparrow \infty$ a.s., and
\item For some $\epsilon \geq 0$, $S_\tau \leq a + b V_\tau + \epsilon$ a.s.\ on
  $\brace{\tau < \infty}$, where
  $\tau \defineas \inf\brace{t \in \Tcal : S_t \geq a + b V_t}$.
\end{enumerate}
Then we have
\begin{align}
   e^{-\epsilon \decay(b)}
    \leq \frac{\P\eparen{\exists t \in \Tcal: S_t \geq a + b V_t}}
              {\expebrace{-a\decay(b)}}
    \leq 1. \label{eq:sandwich}
\end{align}
\end{proposition}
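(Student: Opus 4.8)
The upper bound in \eqref{eq:sandwich} is exactly \cref{th:uniform_chernoff}(a) applied with the optimal tilting parameter, so the plan is to establish only the lower bound. Write $\lambda_\star \defineas \decay(b)$, which lies in $(0, \lambda_{\max})$ since $b \in (0, \bar b)$ and $\psi$ is CGF-like. By \cref{th:slope_facts}(v) we have $\psi(\lambda_\star) = b \lambda_\star$, so the martingale of assumption 1 simplifies to $M_t = \expebrace{\lambda_\star (S_t - b V_t)}$. Because $\lambda_\star > 0$, the crossing event and the stopping time can be read off directly: $\brace{S_t \geq a + b V_t} = \brace{M_t \geq e^{\lambda_\star a}}$, and $\tau = \inf\brace{t \in \Tcal : M_t \geq e^{\lambda_\star a}}$.

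The heart of the argument is to show that $\E[M_\tau \indicator{\tau < \infty}] = 1$ by an optional-stopping argument. First I would observe that the stopped process $M_{t \wedge \tau}$ is a martingale with mean $M_0 = 1$, and, crucially, that it is uniformly bounded by $e^{\lambda_\star(a + \epsilon)}$. Indeed, for $t < \tau$ the threshold has not been reached, so $S_t - b V_t < a$ and hence $M_t < e^{\lambda_\star a}$; and at the crossing time, assumption 3 controls the overshoot, giving $S_\tau \leq a + b V_\tau + \epsilon$ and therefore $M_\tau \leq e^{\lambda_\star(a + \epsilon)}$ on $\brace{\tau < \infty}$. Next I would identify the almost-sure limit of $M_{t \wedge \tau}$: on $\brace{\tau < \infty}$ it equals $M_\tau$ for all large $t$, while on $\brace{\tau = \infty}$ assumption 2 forces $S_t - b V_t \to -\infty$, so $M_t \to 0$. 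Thus $M_{t \wedge \tau} \to M_\tau \indicator{\tau < \infty}$ a.s. Since the stopped martingale is bounded, the bounded convergence theorem lets me pass the limit through the expectation, yielding $\E[M_\tau \indicator{\tau < \infty}] = \lim_t \E[M_{t \wedge \tau}] = 1$.

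The conclusion then follows by combining this identity with the overshoot bound $M_\tau \leq e^{\lambda_\star(a+\epsilon)}$ on $\brace{\tau < \infty}$:
\begin{align*}
1 = \E[M_\tau \indicator{\tau < \infty}] \leq e^{\lambda_\star(a + \epsilon)} \P\eparen{\tau < \infty},
\end{align*}
so that $\P(\exists t: S_t \geq a + b V_t) = \P(\tau < \infty) \geq e^{-\lambda_\star(a + \epsilon)}$. Dividing through by $e^{-a \decay(b)}$ and recalling $\lambda_\star = \decay(b)$ gives the claimed lower bound $e^{-\epsilon \decay(b)}$. The main obstacle is the rigorous justification of the limit interchange in the optional-stopping step, particularly in continuous time, where $(S_t)$ is only c\`adl\`ag and $\tau$ may be a jump time; this is exactly where assumption 3 does its work, since controlling the overshoot is what keeps the stopped martingale bounded and hence uniformly integrable. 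Everything else — the reduction $M_t = \expebrace{\lambda_\star(S_t - bV_t)}$ and the two-sided bound on $M_\tau$ — is elementary once \cref{th:slope_facts}(v) is invoked.
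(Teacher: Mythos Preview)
Your proposal is correct and follows essentially the same argument as the paper: rewrite $M_t = \expebrace{\decay(b)(S_t - bV_t)}$ via \cref{th:slope_facts}(v), use the overshoot bound from assumption~3 together with the definition of $\tau$ to show the stopped martingale is uniformly bounded by $e^{\decay(b)(a+\epsilon)}$, apply bounded convergence with assumption~2 to get $\E[M_\tau \indicator{\tau<\infty}] = 1$, and conclude. Your write-up is in fact slightly more explicit than the paper's in justifying the uniform bound on $M_{t\wedge\tau}$ by splitting into the cases $t < \tau$ and $t \geq \tau$.
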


In particular, if the conditions of \cref{th:sandwich} hold with $\epsilon = 0$,
then the probability bounds in \cref{th:uniform_chernoff} parts (a), (b) and (c)
hold with equality. In the continuous-time case with $(S_t)$ a continuous
martingale, these conditions often hold with $\psi = \psi_N$ and $V_t =
[S]_t$. We give details for the following result in
\cref{sec:proof_continuous_martingale}; see \citet[Theorem
III.44]{protter_stochastic_2005} for more on Kazamaki's criterion:
\begin{corollary}\label{th:continuous_martingale}
  Suppose $(S_t)_{t \in (0,\infty)}$ is a continuous martingale with $S_0 = 0$
  and $[S]_t \uparrow \infty$ a.s.\ satisfying Kazamaki's criterion:
  $\sup_T \E e^{S_T / 2} < \infty$, where the supremum is taken over all bounded
  stopping times $T$. Then
  $\P(\exists t \in (0, \infty): S_t \geq a + b V_t) = e^{-2ab}$.
\end{corollary}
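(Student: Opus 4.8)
The plan is to recognize this as an instance of the sharpness criterion in \cref{th:sandwich}, taken with $\psi = \psi_N$ and intrinsic time $V_t = \eangle{S}_t$, which equals $[S]_t$ because $S$ has continuous paths. Since $\decay_N(b) = 2b$ and $\psi_N(2b) = 2b^2$, the relevant exponential process is $M_t = \expebrace{2b S_t - 2b^2 [S]_t} = \mathcal{E}(2bS)_t$, the stochastic exponential of $2bS$. Note that the upper bound $\P(\exists t: S_t \geq a + b[S]_t) \leq e^{-2ab}$ is automatic: $\mathcal{E}(2bS)$ is a nonnegative local martingale, hence a supermartingale, so $(S_t, [S]_t)$ is $1$-sub-$\psi_N$ and \cref{th:uniform_chernoff}(a) applies with $\decay(b) = 2b$. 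The real content is the matching lower bound, which is exactly what \cref{th:sandwich} delivers once its three hypotheses hold with overshoot $\epsilon = 0$.

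First I would dispose of the two geometric conditions. Condition 3 holds with $\epsilon = 0$: since both $S$ and $[S]$ are continuous, the boundary $a + b[S]_t$ is crossed without overshoot, so $S_\tau = a + b[S]_\tau$ exactly on $\brace{\tau < \infty}$. Condition 2 follows from the Dambis--Dubins--Schwarz representation $S_t = B_{[S]_t}$ for a standard Brownian motion $B$: because $[S]_t \uparrow \infty$, substituting $u = [S]_t$ gives $S_t - b[S]_t = B_u - bu \to -\infty$ a.s.\ as $u \uparrow \infty$, using the Brownian strong law $B_u / u \to 0$ together with $b > 0$.

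The main obstacle is condition 1, which asks that $\mathcal{E}(2bS)$ be a genuine martingale with $M_0 = 1$ rather than merely the local (super)martingale used for the upper bound; this true-martingale property is precisely what pins down the lower bound and forbids mass escaping to infinity. Here I would invoke Kazamaki's criterion (\citealp[Theorem III.44]{protter_stochastic_2005}) applied to the continuous local martingale $\decay(b) S = 2bS$, the stated Kazamaki-type hypothesis being designed to certify exactly the uniform integrability of $\mathcal{E}(2bS)$. The delicate point is the coefficient bookkeeping: Kazamaki's criterion for $\mathcal{E}(2bS)$ controls $\sup_T \E e^{b S_T}$, so one applies the criterion to $2bS$ rather than to $S$ itself. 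With condition 1 in hand, \cref{th:sandwich} gives $1 = e^{-0 \cdot \decay(b)} \leq \P(\exists t: S_t \geq a + b[S]_t)/\expebrace{-a\decay(b)} \leq 1$, forcing equality and hence $\P(\exists t \in (0,\infty): S_t \geq a + b\eangle{S}_t) = e^{-2ab}$. As a consistency check, the same value can be read off directly by time-changing the crossing event via $S_t = B_{[S]_t}$ and appealing to the exact Brownian line-crossing identity of \cref{ex:bm}; routing through \cref{th:sandwich} is preferable only because it frames the result as a clean special case of the general sharpness analysis.
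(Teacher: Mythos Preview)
Your proposal is correct and follows essentially the same route as the paper: verify the three hypotheses of \cref{th:sandwich} with $\psi = \psi_N$, $V_t = [S]_t$, and $\epsilon = 0$, using Kazamaki's criterion for condition~(1), the Dambis--Dubins--Schwarz representation together with $B_u/u \to 0$ for condition~(2), and path continuity for condition~(3). You are in fact slightly more careful than the paper in flagging the coefficient bookkeeping---that Kazamaki applied to $2bS$ requires control of $\sup_T \E e^{bS_T}$ rather than $\sup_T \E e^{S_T/2}$---which the paper's proof passes over silently.
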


In the discrete-time case with i.i.d.\ observations bounded above by $\epsilon$
a.s.\ and having CGF $\psi$, the conditions of \cref{th:sandwich} hold, setting
$V_t = t$. Hence the probability bound in \cref{th:uniform_chernoff}(a) can be
made arbitrarily close to exact by taking $b$ sufficiently small relative to
$\epsilon$, and similarly for parts (b) and (c). So \cref{th:uniform_chernoff}
is sharp in the sense that for any such process, the probability bound is
arbitrarily close to exact for some choice of $(a,b)$ or $(x,m)$. To see the
connection with \cref{th:iid_invariance}, rewrite \eqref{eq:invariance} to keep
the processes $S_t$ and $V_t = t \sigma^2$ fixed and take limits with respect to
$a,b$:
\begin{align}
  \lim_{m \to \infty}
    \P\eparen{\exists t \in \N:
      S_t \geq a\sqrt{m} + \frac{b}{\sqrt{m}} \cdot t \sigma^2} = e^{-2ab}.
\end{align}

\subsection{Geometric relationship between \cref{th:uniform_chernoff} and
  Cram\'er-Chernoff}
\label{sec:geometric}

\begin{figure}[h!]
\centering
\includegraphics{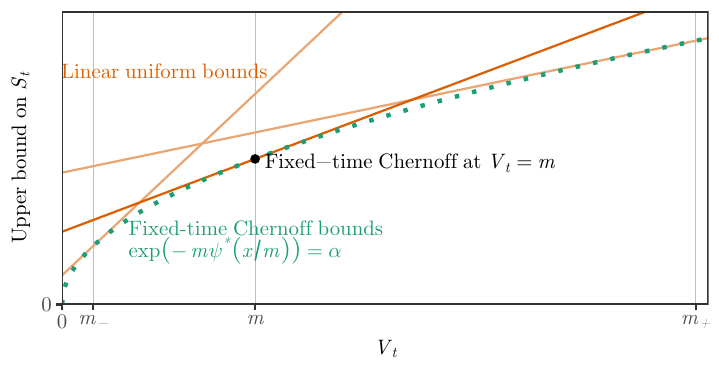}
\caption{Geometric illustration of \cref{th:uniform_chernoff}(b) and its
  relation to fixed-time Cram\'er-Chernoff bounds. \Cref{th:uniform_chernoff}(b)
  chooses the linear boundary which is optimal for $V_t = m$, but other linear
  boundaries with the same crossing probability are illustrated, each of which
  achieves the optimal fixed-time bound at some other time $V_t = m_{\pm}$. Each
  uniform Chernoff bound is tangent to the curve of fixed-time bounds, and
  indeed the curve of fixed-time bounds may be defined as the pointwise infimum
  of such linear uniform bounds. \label{fig:tangent}}
\end{figure}

Whenever a process $(S_t)$ is sub-$\psi$ with $V_t = t$, a fixed-time
Cram\'er-Chernoff upper bound of the form \eqref{eq:basic_chernoff} holds: for
any fixed $t \in \N$, we have
$\P(S_t \geq x) \leq \expebrace{-t \psi^\star(x/t)}$. Let $f_\alpha(t)$ denote
the curve of such fixed-time bounds constructed for a fixed crossing probability
$\alpha$ at each time $t$:
\begin{align}
f_\alpha(t) \defineas t {\psi^\star}^{-1}\pfrac{\log \alpha^{-1}}{t},
\end{align}
where
${\psi^\star}^{-1}(\lambda) = \inf\brace{u \geq 0 : \psi^\star(u) >
  \lambda}$. For example, in the sub-Gaussian case
$\psi(\lambda) = \psi_N(\lambda) = \lambda^2/2$, we have the standard formula
$f_\alpha(t) = \sqrt{2 t \log \alpha^{-1}}$.
\begin{proposition}\label{th:tangent}
  Any line $a + bt$ which is tangent to $f_\alpha(t)$ satisfies
  $\P(\exists t \in \Tcal: S_t \geq a + b t) \leq \alpha$.
\end{proposition}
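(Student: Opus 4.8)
The plan is to reduce the statement to \cref{th:uniform_chernoff}(a) and then verify, by a short Legendre--Fenchel computation, that every line tangent to $f_\alpha$ has uniform upcrossing bound exactly $\alpha$. Since $(S_t)$ is sub-$\psi$ with $V_t = t$ and $l_0 = 1$ in this setting (the fixed-time bound displayed just above carries no prefactor), \cref{th:uniform_chernoff}(a) gives $\P\eparen{\exists t \in \Tcal : S_t \geq a + bt} \leq \expebrace{-a \decay(b)}$. Hence it suffices to show that a line $a + bt$ tangent to $f_\alpha$ satisfies $a\,\decay(b) = \log\alpha^{-1}$, for then the right-hand side equals $\alpha$.

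First I would compute the slope of $f_\alpha$ at the point of contact $t_0$. Writing $c \defineas \log\alpha^{-1}$ and $u(t) \defineas {\psi^\star}^{-1}(c/t)$, so that $\psi^\star(u(t)) = c/t$ and $f_\alpha(t) = t\,u(t)$, implicit differentiation of the constraint yields $f_\alpha'(t) = u(t) - \psi^\star(u(t))/\psistarprime(u(t))$. Setting $\lambda_0 \defineas \psistarprime(u(t_0)) = {\psi'}^{-1}(u(t_0))$ and invoking the Legendre--Fenchel identity $\psi^\star(u) = \lambda_0 u - \psi(\lambda_0)$ at $u = \psi'(\lambda_0)$ (the same identity used throughout the proof of \cref{th:uniform_chernoff}), the slope collapses to $b = f_\alpha'(t_0) = \psi(\lambda_0)/\lambda_0$. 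Since $\decay(b)$ is the inverse of $\lambda \mapsto \psi(\lambda)/\lambda$ by \cref{th:slope_facts}(v), this gives $\decay(b) = \lambda_0$.

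It then remains to evaluate the intercept $a = f_\alpha(t_0) - b\,t_0$. Substituting $u(t_0) = \psi'(\lambda_0)$ and $b = \psi(\lambda_0)/\lambda_0$ gives $a = t_0\bigl(\psi'(\lambda_0) - \psi(\lambda_0)/\lambda_0\bigr) = t_0\,\psi^\star(u(t_0))/\lambda_0$, and since $t_0\,\psi^\star(u(t_0)) = c$ by definition of $u$, this simplifies to $a = c/\lambda_0$. Therefore $a\,\decay(b) = (c/\lambda_0)\cdot\lambda_0 = \log\alpha^{-1}$, whence $\expebrace{-a\decay(b)} = \alpha$, which closes the argument. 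Conceptually, this computation just exhibits $f_\alpha$ as the envelope of the one-parameter family of lines $\{c/\decay(b) + bt\}_b$, each having uniform crossing bound exactly $\alpha$.

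I expect the main obstacle to be regularity and domain bookkeeping rather than conceptual difficulty: I must ensure $u(t_0) \in (0,\bar{b})$ so the identity $\psistarprime = {\psi'}^{-1}$ applies (which holds once $c/t_0$ lies in the range of $\psi^\star$), confirm that $f_\alpha$ is differentiable and concave on the relevant interval so that ``tangent at $t_0$'' genuinely corresponds to a supporting line of slope $f_\alpha'(t_0)$, and handle the boundary behavior as $t_0 \downarrow 0$ or, when $\bar{b} < \infty$, as $t_0 \uparrow \infty$ where the admissible slopes saturate. These verifications follow from the strict convexity and continuity built into the CGF-like assumption together with \cref{th:slope_facts}.
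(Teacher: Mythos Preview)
Your argument is correct and lands on the same identity the paper uses, namely that every tangent line to $f_\alpha$ has the form $a_\lambda + b_\lambda t$ with $a_\lambda = (\log\alpha^{-1})/\lambda$ and $b_\lambda = \psi(\lambda)/\lambda$, whence $a_\lambda \decay(b_\lambda) = \log\alpha^{-1}$ and \cref{th:uniform_chernoff}(a) gives the bound $\alpha$. The route differs slightly: the paper invokes the representation $f_\alpha(t) = \inf_\lambda\bigl[(\log\alpha^{-1})/\lambda + (\psi(\lambda)/\lambda)\,t\bigr]$ (citing Lemma~2.4 of \citet{boucheron_concentration_2013}), which exhibits $f_\alpha$ directly as the lower envelope of this family of lines and makes concavity and the tangent parametrization immediate, whereas you recover the same parametrization by implicitly differentiating $f_\alpha(t) = t\,{\psi^\star}^{-1}(c/t)$ and then simplifying via the Legendre--Fenchel identity. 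Your computation is self-contained and avoids the external citation; the paper's envelope shortcut sidesteps the regularity bookkeeping you flag at the end (differentiability of $f_\alpha$, domain of $u(t_0)$) since those come for free from the infimum representation.
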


In words, the above proposition states that the set of linear boundaries from
\cref{th:uniform_chernoff} is exactly the set of tangent lines to $f_\alpha$, or
conversely, $f_\alpha$ is defined as the pointwise infimum of this set of linear
boundaries, as illustrated in \cref{fig:tangent}. We give the proof in
\cref{sec:proof_tangent}. This observation provides some intuition for the
appearance of the Legendre-Fenchel transform in the standard Cram\'er-Chernoff
formula \eqref{eq:basic_chernoff}.

\subsection{Future work}

\textbf{Characterizing families of sub-$\psi$ processes.} Our
\cref{th:uniform_chernoff} bounds the maximal line-crossing probability over
each family $\subpsiclass$, and \cref{sec:sufficient_conditions} collects
sufficient conditions for membership is many such families. It would be
interesting to better delineate such families, for example by characterizing
necessary conditions for inclusion. When $\psi$ is CGF-like and $(V_t)$ is
predictable, it is necessary for the increments $\Delta S_t$ to have finite
conditional CGFs a.s. When $S_t$ is a cumulative sum of i.i.d., real-valued
random variables and $V_t \propto t$, the existence of the CGF is sufficient as
well (\cref{th:discrete_facts}(a)). When the increments are not i.i.d., however,
existence of conditional CGFs is no longer sufficient. When $(V_t)$ is not
predictable, as with self-normalized bounds, it is no longer necessary for
increments to have finite CGFs (e.g., \cref{ex:cauchy}).

\textbf{Determining optimal $l_0$ values.} Smaller values of $l_0$ are
preferable since they lead to tighter bounds in \cref{th:uniform_chernoff}. Most
of the results in this paper take either $l_0 = 1$ for scalar observations or
$l_0 = d$ for $d \times d$ matrix observations. Taking $\lambda \downarrow 0$ in
\cref{th:canonical_assumption} shows we cannot have $l_0 < 1$. Furthermore,
asymptotic results about maxima of independent Gaussians show that $l_0 = d$ is
an asymptotic lower bound as $d \uparrow \infty$ for operator-norm inequalities
over any class that includes matrices with independent Gaussians on the diagonal
(\citealp{galambos_asymptotic_1978}; \citealp{boucheron_concentration_2013},
Exercise 2.17). It would be useful to derive more results about optimal values
of $l_0$ in various settings.

\textbf{Generalizing assumptions.} \Cref{th:canonical_assumption} can be further
generalized, allowing it to subsume more known inequalities and yield sharper
results for certain cases. However, the corresponding general theorem and
specific results are less user-friendly. We have chosen our
\cref{th:canonical_assumption} and \cref{th:uniform_chernoff} to balance
generality and tractability, but in \cref{sec:more_general} we present one
possible generalization of our assumption and a corresponding general theorem
and specific bound.

\textbf{Polynomial line-crossing inequalities.} We have focused on exponential
inequalities, but polynomial concentration also plays an important role in the
literature. A theory of polynomial line-crossing analogous to that presented
here may begin with the Dubins-Savage inequality (see \cref{sec:dubins_savage})
and its $l_p$ extension in \citet{khan_$l_p$-version_2009}.

\textbf{Banach spaces.} The Banach space bounds in \cref{sec:banach} give
dimension-free $l_p$ bounds for $2 \leq p < \infty$, but do not give $l_\infty$
bounds. In particular, this does not yield operator-norm bounds for
infinite-dimensional Hilbert-Schmidt operators, as in
\citet{minsker_extensions_2017}. Extending Minsker's ``effective rank''
approach to the uniform bounds of this paper would be an interesting future
extension.

\section{Proofs}\label{sec:proofs}

\subsection{Proof of \cref{th:ville}}
\label{sec:proof_ville}

Define the stopping time $\tau \defineas \inf\brace{t \in \Tcal: L_t \geq a}$,
where $\inf \emptyset = \infty$. For any fixed $m \in \Tcal$, Markov's
inequality implies
\begin{align}
  \P\condparen{\tau \leq m}{\Fcal_0}
  = \P\condparen{L_{\tau \bmin m} \geq a}{\Fcal_0}
  \leq \frac{\E\condparen{L_{\tau \bmin m}}{\Fcal_0}}{a}
  \leq \frac{L_0}{a},
\end{align}
where we have used Doob's optional stopping theorem for bounded stopping times
in the final step (e.g., \citealp{durrett_probability:_2017}, Exercise 4.4.2; or
\citealp{protter_stochastic_2005}, Theorem I.17). Taking $m \to \infty$ and
using the bounded convergence theorem yields
$\P\condparen{\tau < \infty}{\Fcal_0} \leq L_0 / a$, which is the desired
conclusion.

\subsection{Proof of \cref{th:universal}}
\label{sec:proof_universal}

Applying Taylor's theorem to $\psi$ at the origin, we have
$\psi(\lambda) = \ebracket{\frac{\psi''(0_+)}{2} + h(\lambda)} \lambda^2$ where
$h(\lambda) \to 0$ as $\lambda \downarrow 0$. Choose $\lambda_0 > 0$ small
enough so that $\psi(\lambda) \leq \psi''(0_+) \lambda^2$ for all
$0 \leq \lambda < \lambda_0$. Then, setting $a \defineas 2 \psi''(0_+)$,
$c \defineas 1/\lambda_0$ and using that fact that $\psi_{G,c} \geq \psi_N$ for
$c \geq 0$, we have
$\psi(\lambda) \leq a \psi_N(\lambda) \leq a \psi_{G,c}(\lambda)$ for all
$0 \leq \lambda < 1/c$. The same argument holds with $\psi_E$ in place of
$\psi_G$. \qed

\subsection{Proof of \cref{th:psi_relations}}\label{sec:proof_psi_relations}

In each case, we show an inequality between two $\psi$ functions. The conclusion
then follows from the fact that is $\psi_1 \leq \psi_2$, then
$\expebrace{\lambda S_t - \psi_2(\lambda) V_t} \leq \expebrace{\lambda S_t -
  \psi_1(\lambda) V_t}$, showing that the key condition of
\cref{th:canonical_assumption} continues to hold with $\psi_2$ in place of
$\psi_1$.

\textbf{Part (1)}: the proof of Theorem 1 in \citet{hoeffding_probability_1963}
shows that, for all $\mu \in (0, 1)$ and all $t \in [0, 1-\mu)$,
\begin{multline}
  (\mu + t) \log\pfrac{\mu + t}{\mu} +
    (1 - \mu - t) \log\pfrac{1 - \mu - t}{1 - \mu}
  \\ \geq t^2 \begin{cases}
    \frac{1}{1 - 2\mu} \log\pfrac{1 - \mu}{\mu}, & 0 < \mu < \frac{1}{2}, \\
    \frac{1}{2 \mu (1 - \mu)}, & \frac{1}{2} \leq \mu < 1,
  \end{cases}
\end{multline}
with equality at $t = 1 - 2\mu$. Substituting $\mu = g / (g + h)$ and
$t = u / (g + h)$ for $u \in [0, h)$, some algebra shows that the left-hand side
is equal to $gh\psi_B^\star(u/gh)$ and the right-hand side is equal to
$\psi_N^\star(u) / \varphi(g, h)$, so that, for all $g, h > 0$ and
$u \in [0, h)$, $\psi_B^\star(u/gh) \geq \psi_N^\star(u) / [gh\varphi(g, h)]$,
with equality at $u = h - g$. The order-reversing and scaling properties of the
Legendre-Fenchel transform now imply
$\psi_B^{\star \star}(\lambda) \leq \psi_N^{\star \star}(\varphi(g,h) \lambda) /
[gh \varphi(g,h)]$ for all $\lambda \geq 0$. Finally, since $\psi_B$ and
$\psi_N$ are convex and continuous, each is equal to its biconjugate
$\psi^{\star\star}$ by the Fenchel-Moreau theorem, so that
$\psi_B(\lambda) \leq \frac{\varphi(g, h)}{gh} \psi_N(\lambda)$.

\textbf{Part (2)}: This follows directly from equation (4.15) in
\citet{hoeffding_probability_1963} which, in our notation, says that
$\psi_B(\lambda) \leq \frac{(g+h)^2}{4gh} \psi_N(\lambda)$ for all
$\lambda \in \R$.

\textbf{Part (3)}: In our notation, Lemma 2.32 of
\citet{bercu_concentration_2015} shows that
$(g \psi_{B,g,1})^\star(u) \geq (g \psi_{P,1-g})^\star(u)$ for all $u \in [0,1]$
and $g > 0$. The order-reversing and scaling properties of the Legendre-Fenchel
transform imply
$\psi_{B,g,1}^{\star \star}(\lambda) \leq \psi_{P,1-g}^{\star \star}(\lambda)$
for all $\lambda \geq 0$. Since $\psi_{B,g,1}$ and $\psi_{P,1-g}$ are convex and
continuous, each is equal to its biconjugate $\psi^{\star \star}$ by the
Fenchel-Moreau theorem, so that
$\psi_{B,g,1}(\lambda) \leq \psi_{P,1-g}(\lambda)$. The result now follows from
algebraic identities involving $\psi_B$ and $\psi_P$: for any $g,h > 0$,
\begin{align}
  \psi_{B,g,h}(\lambda) = \frac{1}{h^2} \psi_{B,g/h,1}(h\lambda)
  \leq \frac{1}{h^2} \psi_{P,(h-g)/h}(h\lambda)
  = \psi_{P,h-g}(\lambda).
\end{align}

\textbf{Part (4)} is immediate from the definition $\psi_P = \psi_N$ when
$c = 0$.

\textbf{Part (5)}: since $\psi_{P,c_P}''(\lambda) = e^{c_P \lambda}$ and
$\psi_{G,c_G}''(\lambda) = (1 - c_G \lambda)^{-3}$,
\begin{align}
  \frac{\psi_{P,c}''(\lambda)}{\psi_{G,c/3}''(\lambda)}
  = (1 - c \lambda/3)^3 e^{c \lambda}
  = f(1 - c \lambda/3),
  \quad \text{where } f(y) = y^3 e^{3 (1 - y)}.
\end{align}
We have $f(1) = 1$ and $f'(y) = 3y^2 e^{3(1 - y)} (1 - y)$, so that
$f'(y) \leq 0$ for $y \geq 1$ and $f'(y) \geq 0$ for $y \leq 1$. Hence
$f(y) \leq f(1) = 1$ for all $y$, i.e.,
$\psi_{P,c}''(\lambda) \leq \psi_{G,c/3}''(\lambda)$ for all $\lambda$. Since
$\psi_{P,c}(0) = \psi_{G,c/3}(0) = 0$ and
$\psi_{P,c}'(0) = \psi_{G,c/3}'(0) = 0$, we conclude
$\psi_{P,c}(\lambda) \leq \psi_{G,c/3}(\lambda)$ for all $\lambda$.

\textbf{Parts (6,7,8)}: some algebra shows that
\begin{align}
  \psi_{G,c_G}'(\lambda) - \psi_{E,c_E}'(\lambda)
  = \frac{\lambda^2 [3c_G - 2c_E + c_G(c_E - 2c_G) \lambda]}
         {2(1-c_G \lambda)^2(1-c_E \lambda)}.
  \label{eq:diff_of_derivs}
\end{align}
Since $\psi_{G,c_G}(0) = \psi_{E,c_E}(0) = 0$, we have
$\psi_{G,c_G}(\lambda) \geq (\leq)\, \psi_{E,c_E}(\lambda)$ for all $\lambda$ if
$\psi_{G,c_G}' \geq (\leq)\, \psi_{E,c_E}'$ for all $\lambda$, and
\eqref{eq:diff_of_derivs} shows the latter is true if and only if
$f(\lambda) \defineas 3c_G - 2c_E + c_G(c_E - 2c_G) \lambda \geq (\leq)\, 0$ for
all $\lambda$. Note we need only check the domain
$0 \leq \lambda < c_E^{-1} \bmin (2c_G)^{-1}$ on which both functions are
defined.
\begin{itemize}
\item For part (6), if $c_E = 3c_G/2$, then
  $f(\lambda) = -c_G^2 \lambda / 2 \leq 0$, so that
  $\psi_{G,c} \leq \psi_{E,3c/2}$ for $c \in \R$.
\item For part (7), if $c_G = c_E \geq 0$ then we have
  $f(\lambda) = c(1 - c\lambda) \geq 0$ for $0 \leq \lambda < c^{-1}$, so that
  $\psi_{E,c} \leq \psi_{G,c}$ for $c \geq 0$.
\item For part (8), if $c_G = c_E/2 < 0$, then $f(\lambda) = -c_E/2 > 0$, so
  that $\psi_{E,c} \leq \psi_{G,c/2}$ for $c < 0$.
\end{itemize}

\textbf{Part (9)}: from $\psi_{P,2c}'(\lambda) = \frac{e^{2c\lambda} - 1}{2c}$
and $\psi_{G,c}'(\lambda) = \frac{\lambda(2-c\lambda)}{2(1-c\lambda)^2}$, we
have
\begin{align}
  \psi_{P,2c}'(\lambda) - \psi_{G,c}'(\lambda)
  = \frac{1 - f(1 + \abs{c}\lambda)}{2\abs{c}(1 - c\lambda)^2},
  \quad \text{where } f(y) = y^2 e^{2(1-y)}.
\end{align}
We have $f(1) = 1$ and $f'(y) = 2y e^{2(1-y)} (1 - y) \leq 0$ for all
$y \geq 1$, so that $f(y) \leq 1$ for all $y \geq 1$. Hence
$\psi_{P,2c}'(\lambda) \geq \psi_{G,c}'(\lambda)$ for all $\lambda \geq
0$. Together with $\psi_{P,2c}(0) = \psi_{G,c}(0) = 0$, we conclude
$\psi_{P,2c}(\lambda) \geq \psi_{G,c}(\lambda)$ for all $\lambda \geq 0$.

\textbf{Part (10)} follows from the fact that $\psi_{P,c} \uparrow \psi_N$ as
$c \uparrow 0$.

\textbf{Part (11)}: for any $g,h > 0$, we have
\begin{align}
  \psi_{B,g,h}'(\lambda)
    = \frac{e^{h\lambda} - e^{-g\lambda}}{ge^{h\lambda} + he^{-g\lambda}},
  \label{eq:psi_B_deriv}
\end{align}
so
$\lim_{h \downarrow 0} \psi_{B,g,h}'(\lambda) = (1 - e^{-g\lambda})/g =
\psi_{P,-g}'(\lambda)$. Since $\psi_{B,g,h}(0) = \psi_{P,c} = 0$ for all $g,h>0$
and all $c \in \R$, we see that
$\lim_{h \downarrow 0} \psi_{B,g,h}(\lambda) = \psi_{P,-g}(\lambda)$ for all
$\lambda \geq 0$. Furthermore, differentiating \eqref{eq:psi_B_deriv} with
respect to $h$ reveals
\begin{align}
  \frac{d}{dh} \psi_{B,g,h}'(\lambda)
  = \frac{e^{(h-g)\lambda} (g+h)^2 \psi_{P,-(g+h)}(\lambda)}
         {(ge^{h\lambda} + he^{-g\lambda})^2}
  \geq 0,
\end{align}
which implies $\psi_{B,g,h}(\lambda)$ is nondecreasing with $h$ for all
$\lambda \geq 0$. We conclude
$\psi_{B,g,h}(\lambda) \downarrow \psi_{P,-g}(\lambda)$ as $h \downarrow 0$,
hence $\psi_{P,c} \leq \psi_{B,-c,h}$ for all $h > 0$ whenever $c < 0$.

To see that no other implications are possible, observe that, as
$\lambda \to \infty$, $\psi_B(\lambda) = \Ocal(\lambda)$,
$\psi_N(\lambda) = \Ocal(\lambda^2)$, and when $c > 0$,
$\psi_P(\lambda) = \Ocal(e^{c\lambda})$, while $\psi_G(\lambda)$ and
$\psi_E(\lambda)$ diverge at a finite value of $\lambda$. So we cannot use
$a \psi_B$ to upper bound any of the other $\psi$ functions for any constant
$a$. Likewise, we cannot use $a \psi_N$ to upper bound $\psi_P$,
$\psi_G$ or $\psi_E$, and we cannot use $a \psi_P$ to upper bound $\psi_G$
or $\psi_E$.

Now if $S_t$ is a sum of i.i.d. $\Normal(0,1)$ random variables, then $(S_t)$ is
sub-Gaussian with variance process $V_t = t$, and the exponential process
$\expebrace{\lambda S_t - \lambda^2 t / 2}$ is a martingale. Under any scaling
of $V_t$ by a constant $a > 0$, $(S_t)$ cannot be sub-Bernoulli, because
$\E \expebrace{\lambda \Delta S_t - a \psi_B(\lambda)} = \expebrace{\lambda^2 /
  2 - a \psi_B(\lambda)} > 1$ for sufficiently large $\lambda$, so the
exponential process $\expebrace{\lambda S_t - \psi_B(\lambda) t}$ will be
expectation-increasing. Analogous arguments shows that other reverse
implications are not possible.

To see that the above constants are the best possible when we allow only scaling
of $V_t$ by a constant, consider the third-order expansions of each $\psi$
function about $\lambda = 0$:
\begin{align}
  \psi_B(\lambda) &=
    \ebracket{\frac{\lambda^2}{2} + \frac{(h-g) \lambda^3}{6}} + o(\lambda^3)
    \\
  \psi_N(\lambda) &= \frac{\lambda^2}{2} \\
  \psi_P(\lambda) &= \frac{\lambda^2}{2} + \frac{c\lambda^3}{6} + o(\lambda^3)\\
  \psi_E(\lambda) &= \frac{\lambda^2}{2} + \frac{c\lambda^3}{3} + o(\lambda^3)\\
  \psi_G(\lambda) &= \frac{\lambda^2}{2} + \frac{c\lambda^3}{2} + o(\lambda^3).
\end{align}
It is clear from these expansions that parts (3), (4), (5), (6), and (11) have
the best possible constants. Part (7) is unimprovable because $\psi_E$ diverges
at $\lambda = 1 / c$, and using any scale parameter in $\psi_G$ smaller than $c$
would make $\psi_G$ finite at $\lambda = 1 / c$. For part (8), recall that when
$c < 0$, $\bar{b} = \abs{c}^{-1}$ for $\psi_E$, while $\bar{b} = \abs{2c}^{-1}$
for $\psi_G$. Hence, if $c' < c/2 < 0$, then
$\lim_{\lambda \to \infty} \psi_{G,c'}'(\lambda) = \abs{2c'}^{-1} < \abs{c}^{-1}
= \lim_{\lambda \to \infty} \psi_{E,c}'(\lambda)$, so that
$\psi_{G,c'}(\lambda)$ must be smaller than $\psi_{E,c}(\lambda)$ for
sufficiently large $\lambda$. Part (9) is unimprovable by an analogous argument.

For part (1), when $g \geq h$, we know that the constant of one in front of
$\psi_N(\lambda)$ is the best possible from the expansions above. When $g < h$,
some algebra shows that the inequality
$\psi_{B,g,h}(\lambda) \leq \frac{\varphi(g,h)}{gh} \psi_N(\lambda)$ holds with
equality at $\lambda = (h-g) / \varphi(g,h)$, so the constant cannot be
improved. For part (2), it is easy to see that
$\varphi(g,h) = \pfrac{g+h}{2}^2 = g^2$ when $g = h$, so the constant
$\frac{(g+h)}{4gh}$ is the best possible of the form $k/gh$ where $k$ is a
function of $g+h$ alone. \qed

A brief remark on the rationale behind part (2). In the ``Bernoulli I''
(\cref{th:discrete_facts}(b)) and ``Bernoulli II''
(\cref{th:novel_discrete_conds}(a)) conditions, $V_t = ght$, so applying
\cref{th:psi_relations}, part (2) leads to $V_t = \pfrac{g+h}{2}^2 t$, a
function of the total range $g+h$ alone. This is useful in the common case that
observations are known to be bounded in a range $[a,b]$, and an inequality is
desired which depends only on the range $b - a$ and not on the location of the
means within $[a,b]$.

\subsection{An intermediate condition for sub-$\psi$ processes}

In discrete time, the following result capture a useful general condition on a
matrix-valued process $(Y_t)$ that is sufficient to show that the
maximum-eigenvalue process $S_t = \gamma_{\max}(Y_t)$ is sub-$\psi$.

\begin{lemma}\label{th:sub_psi_lemma}
  Let $\psi$ be a real-valued function with domain $[0, \lambda_{\max})$. Let
  $(Y_t)_{t \in \N}$ be an adapted, $\Hcal^d$-valued process. Let
  $(W_t)_{t \in \N}$ be predictable, $\Hcal^d$-valued, and nondecreasing in the
  semidefinite order, with $W_0$ = 0. Let $(U_t)_{t \in \N}$ be defined by
  $U_0 = 0$ and $\Delta U_t = u_t(\Delta Y_t)$ for some
  $u_t : \R \to \R_{\geq 0}$, for each $t$. If, for all $t \in \N$ and
  $\lambda \in [0, \lambda_{\max})$, we have
  \begin{align}
    \log \E_{t-1} \expebrace{\lambda \Delta Y_t - \psi(\lambda) \Delta U_t}
    \preceq \psi(\lambda) \Delta W_t,
    \label{eq:sub_psi}
  \end{align}
  then ${S_t = \gammamax(Y_t)}$ is $d$-sub-$\psi$ with variance process
  $V_t = \gammamax(U_t + W_t)$.
\end{lemma}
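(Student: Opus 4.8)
The plan is to adapt Tropp's trace-exponential construction \citep{tropp_freedmans_2011, tropp_user-friendly_2012} to produce the dominating supermartingale demanded by \cref{th:canonical_assumption}. For each fixed $\lambda \in [0, \lambda_{\max})$, I would set
\[
L_t(\lambda) \defineas \tr \expebrace{\lambda Y_t - \psi(\lambda)(U_t + W_t)}.
\]
Since $Y_0 = U_0 = W_0 = 0$, the exponent vanishes at $t = 0$, so $L_0(\lambda) = \tr I_d = d = l_0$, meeting the initial-value requirement. It then remains to verify two things: that $(L_t(\lambda))$ is a nonnegative supermartingale with respect to $(\Fcal_t)$, and that it dominates the exponential process $\expebrace{\lambda S_t - \psi(\lambda) V_t}$ pointwise.

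For the supermartingale property, I would decompose the exponent as $\lambda Y_t - \psi(\lambda)(U_t+W_t) = H_{t-1} + B_t$, where $H_{t-1} \defineas \lambda Y_{t-1} - \psi(\lambda)(U_{t-1} + W_{t-1}) - \psi(\lambda)\Delta W_t$ is $\Fcal_{t-1}$-measurable (using that $(Y_t)$ is adapted, $(U_t)$ is adapted by its construction from $\Delta Y$, and $(W_t)$ is predictable so $\Delta W_t$ is known at time $t-1$) and $B_t \defineas \lambda \Delta Y_t - \psi(\lambda)\Delta U_t$ carries the fresh randomness. The crucial tool is the consequence of Lieb concavity used by Tropp: for fixed Hermitian $H$ and random Hermitian $X$, $\E \tr \exp(H + X) \leq \tr \exp(H + \log \E e^X)$. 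Applying this conditionally with $H = H_{t-1}$ and $X = B_t$ yields $\E_{t-1} L_t(\lambda) \leq \tr \exp(H_{t-1} + \log \E_{t-1} e^{B_t})$. Hypothesis \eqref{eq:sub_psi} states exactly that $\log \E_{t-1} e^{B_t} \preceq \psi(\lambda)\Delta W_t$, so the argument of the trace-exponential is $\preceq H_{t-1} + \psi(\lambda)\Delta W_t = \lambda Y_{t-1} - \psi(\lambda)(U_{t-1}+W_{t-1})$. Since $A \mapsto \tr\exp(A)$ is monotone in the semidefinite order, the right-hand side is bounded by $L_{t-1}(\lambda)$, establishing $\E_{t-1} L_t(\lambda) \leq L_{t-1}(\lambda)$.

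For the domination, I would use that the trace-exponential dominates the top-eigenvalue exponential: for any $M \in \Hcal^d$, $\tr\exp(M) \geq \expebrace{\gammamax(M)}$, because $\tr\exp(M)$ is a sum of positive terms one of which is $\expebrace{\gammamax(M)}$. Taking $M = \lambda Y_t - \psi(\lambda)(U_t + W_t)$ and letting $v$ be a unit top-eigenvector of $Y_t$, a Rayleigh-quotient bound gives $\gammamax(M) \geq \lambda\, v^* Y_t v - \psi(\lambda)\, v^*(U_t+W_t) v = \lambda \gammamax(Y_t) - \psi(\lambda)\, v^*(U_t+W_t)v$. Because $U_t + W_t \succeq 0$ (as $u_t \geq 0$ forces $\Delta U_t \succeq 0$, and $(W_t)$ is nondecreasing from $0$), we have $0 \leq v^*(U_t+W_t) v \leq \gammamax(U_t+W_t) = V_t$; using $\psi(\lambda) \geq 0$ I would then conclude $\gammamax(M) \geq \lambda S_t - \psi(\lambda) V_t$, hence $L_t(\lambda) \geq \expebrace{\lambda S_t - \psi(\lambda) V_t}$, as required.

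The main obstacle is the Lieb-concavity inequality invoked in the supermartingale step; everything else is bookkeeping with adaptedness, Weyl/Rayleigh bounds, and monotonicity of the trace-exponential. I would also need to confirm integrability of $L_t(\lambda)$ so that the conditional expectations and the supermartingale inequality are well-posed, which is routine under the standing assumptions but should be stated explicitly. A minor point worth flagging is the appeal to $\psi(\lambda) \geq 0$ in the domination step; this is automatic for the CGF-like and standard $\psi$ functions of \cref{sec:psi_functions} to which the lemma is applied.
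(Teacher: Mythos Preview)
Your proposal is correct and follows essentially the same route as the paper: define $L_t(\lambda) = \tr\exp\{\lambda Y_t - \psi(\lambda)(U_t+W_t)\}$, use Lieb's concavity (via Jensen, as in Tropp) together with the predictability of $W_t$ and hypothesis \eqref{eq:sub_psi} for the supermartingale step, and then bound the trace-exponential below by $\exp\{\lambda S_t - \psi(\lambda)V_t\}$. The only cosmetic difference is in the domination step, where the paper first replaces $U_t+W_t$ by $\gamma_{\max}(U_t+W_t)I_d$ inside the trace-exponential (via monotonicity) and then applies the spectral mapping identity $\gamma_{\max}(A - cI_d) = \gamma_{\max}(A) - c$, whereas you use a Rayleigh-quotient argument with the top eigenvector of $Y_t$; both require $\psi(\lambda)\geq 0$, which you rightly flag and which the paper uses implicitly.
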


For a familiar example, suppose $d = 1$ and $(Y_t)$ has independent
increments. Let $W_t = t$, $U_t \equiv 0$ and $\psi(\lambda) = \lambda^2 /
2$. Then \eqref{eq:sub_psi} reduces to the usual definition of a 1-sub-Gaussian
random variable \citep{boucheron_concentration_2013}. For a self-normalized
example, let $(\Delta Y_t)$ be i.i.d.\ from any distribution symmetric about
zero. Then, again letting $\psi(\lambda) = \lambda^2 / 2$, an argument due to
\citet{de_la_pena_general_1999} shows that \eqref{eq:sub_psi} holds with
$W_t \equiv 0$ and $U_t = \sum_{i=1}^t \Delta Y_i^2$. See
\cref{th:novel_discrete_conds}(d) for a general statement of this condition.

The value $l_0 = d$, the ambient dimension, leads to a pre-factor of $d$ in
all of our operator-norm matrix bounds. In cases when
$\sup_{t \in \Tcal} \rank(U_t + W_t) \leq r < d$ a.s., the pre-factor $d$ in our
bounds may be replaced by $r$ via an argument originally due to
\citet{oliveira_sums_2010}. See \cref{sec:rank_prefactor} for details.

\begin{proof}[Proof of \cref{th:sub_psi_lemma}]
  The key result here is Lieb's concavity theorem:

  \begin{fact}[\citealp{lieb_convex_1973,tropp_user-friendly_2012}]
    \label{th:lieb}
    For any fixed $H \in \Hcal^d$, the function
    $A \mapsto \tr\expebrace{H + \log(A)}$ is concave on the positive-definite
    cone.
  \end{fact}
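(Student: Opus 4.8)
The goal is to prove that $f(A) := \tr\exp(H + \log A)$ is concave on the open cone of positive definite matrices, for fixed Hermitian $H$. The cleanest architecture is to treat Lieb's joint concavity theorem for \emph{power} functionals as the genuine engine: the map $(A, C) \mapsto \tr\big(K^* A^s K C^{1-s}\big)$ is jointly concave on positive definite pairs for every fixed matrix $K$ and every $s \in [0,1]$. The exponential form is recovered from this power statement by a standard Lie--Trotter/differentiation limit, writing $\exp(H + \log A) = \lim_{n\to\infty}\big(e^{H/n} A^{1/n}\big)^n$ and extracting the part linear in $\log A$ as $s \downarrow 0$, so that concavity of $A \mapsto f(A)$ is inherited from the power functional. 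This reduction is routine bookkeeping, and I would not dwell on it; the real work is the power statement.

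To prove the joint power concavity I would follow Lieb's analytic-interpolation argument. After fixing the endpoints of a segment and reducing joint concavity to a one-parameter statement in the exponent $s$, I regard $s$ as a complex variable $z$ and use that $A^{iy}$ and $C^{iy}$ are unitary. Consequently, on the two boundary lines $\operatorname{Re} z \in \{0,1\}$ of the strip $0 \le \operatorname{Re} z \le 1$ the relevant trace is controlled by positivity of the form $\tr(X^* X) \ge 0$; a Hadamard three-lines (maximum-modulus) argument then transports this boundary control into the interior value $\operatorname{Re} z = s$, which yields the desired concavity. An alternative and equally viable route replaces the interpolation by the operator convexity of $t \mapsto t\log t$ together with the joint convexity of the quantum relative entropy $D(P \| Q) = \tr P(\log P - \log Q)$, from which both the power and exponential forms descend.

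The main obstacle is entirely the non-commutativity of $H$ and $\log A$, and it is instructive to see exactly where it bites by instead attacking $f$ directly through its Hessian. Since $\log$, $\exp$, and $\tr$ are smooth on the cone, concavity is equivalent to $\phi''(0) \le 0$ for every positive definite $A$ and Hermitian $B$, where $\phi(t) := f(A + tB)$. Writing $M = H + \log A$ and $R_u = (A + uI)^{-1}$, Duhamel's formula $\frac{d}{dt}e^{M} = \int_0^1 e^{\alpha M}M' e^{(1-\alpha)M}\,d\alpha$ together with the logarithmic derivatives $\Gamma_1 = \int_0^\infty R_u B R_u\,du$ and $\Gamma_2 = -2\int_0^\infty R_u B R_u B R_u\,du$ gives
\[
\phi''(0) = \int_0^1 \tr\!\big(e^{\alpha M}\Gamma_1 e^{(1-\alpha)M}\Gamma_1\big)\,d\alpha + \tr\big(e^{M}\Gamma_2\big).
\]
The first term is nonnegative by positivity of $\tr(X^* X)$, so concavity demands that the logarithmic curvature term $\tr(e^{M}\Gamma_2)$ dominate it.

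When $H$ commutes with $A$, everything simultaneously diagonalizes and the inequality $\phi''(0) \le 0$ collapses to a family of elementary scalar divided-difference inequalities, which one checks by one-variable calculus. It is precisely the failure of simultaneous diagonalization---which couples $e^{M} = e^{H + \log A}$ to the resolvents of $A$ in a genuinely non-diagonal way---that makes the bound deep and resists a termwise estimate. I therefore expect this passage from the commuting boundary to the non-commuting interior to be the crux of the whole argument: the Duhamel expansion, the resolvent derivatives, and the Lie--Trotter reduction are all bookkeeping, whereas closing the inequality is exactly where Lieb's interpolation (or, equivalently, the relative-entropy convexity) must supply the essential analytic input.
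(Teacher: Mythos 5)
The paper does not prove this Fact; it imports it wholesale from \citet{lieb_convex_1973} and \citet{tropp_user-friendly_2012}, so your proposal must be judged against the classical literature rather than against any argument in the text. At the level of architecture you are faithful to that literature: the two-variable power concavity (Wigner--Yanase--Dyson) is indeed the historical engine, the relative-entropy route you mention as an alternative is essentially the modern streamlined derivation, and your Hessian computation is correct as far as it goes --- the Duhamel term $\int_0^1 \tr(e^{\alpha M}\Gamma_1 e^{(1-\alpha)M}\Gamma_1)\,d\alpha$ is nonnegative since $\Gamma_1$ is Hermitian, the curvature term $\tr(e^M\Gamma_2)$ is nonpositive, and you rightly identify the failure of simultaneous diagonalization as the obstruction that prevents closing the inequality termwise.

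There are, however, two genuine gaps in the steps you dismiss as bookkeeping. First, the reduction from the power statement to the exponential form does not work as written: $\tr\big((e^{H/n}A^{1/n})^n\big)$ is not of the form $\tr(K^* A^s K C^{1-s})$, so Lie--Trotter does not let concavity be ``inherited'' from the two-variable functional. The Lie--Trotter route actually requires Epstein's theorem --- concavity of $A \mapsto \tr\big((K^* A^p K)^{1/p}\big)$ for $0 < p \leq 1$ --- which is a different and harder statement. The route that does descend from the two-variable functional is: differentiate at $s = 0$ to get joint convexity of quantum relative entropy, then invoke the Gibbs variational formula $\tr e^{H + \log A} = \sup_{T \succ 0}\,\{\tr(TH) + \tr T - \tr(T\log T) + \tr(T \log A)\}$ together with the fact that a partial supremum of a jointly concave function is concave; this is precisely the derivation popularized by Tropp, and it should be your primary path, not an aside, since without the variational formula your main reduction has no valid mechanism. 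Second, your three-lines sketch proves the wrong direction: Hadamard's theorem transports boundary \emph{upper} bounds on $|F(z)|$ into interior upper bounds, i.e., log-convexity and H\"older-type inequalities (Araki--Lieb--Thirring, Golden--Thompson), whereas joint concavity is a \emph{lower} bound at interior points. Interpolation proofs of WYD concavity do exist (Uhlmann's quadratic interpolation, Kosaki's complex interpolation of Hilbert spaces), but they interpolate norms of Hilbert-space families, and ``positivity of $\tr(X^*X)$ on the boundary lines'' is not the operative mechanism; as stated, this step would fail.
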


  Fixing ${\lambda \in [0, \lambda_{\max})}$, Lieb's theorem and Jensen's
  inequality together imply
  \begin{multline}
  \E_{t-1} \tr \expebrace{\lambda Y_t - \psi(\lambda) \cdot (U_t + W_t)}
    \\ \leq \tr \expebrace{
      \lambda Y_{t-1} - \psi(\lambda) \cdot (U_{t-1} + W_t)
      + \log \E_{t-1} e^{\lambda \Delta Y_t - \psi(\lambda) \cdot \Delta U_t}
    }.
  \end{multline}
  Now we apply inequality \eqref{eq:sub_psi} to the expectation and use the
  monotonicity of the trace exponential to obtain
  \begin{align}
  \E_{t-1} \tr \expebrace{\lambda Y_t - \psi(\lambda) \cdot (U_t + W_t)}
    &\leq \tr \expebrace{
            \lambda Y_{t-1} - \psi(\lambda) \cdot (U_{t-1} + W_{t-1})
          }.
  \end{align}
  This shows that the process
  $L_t \defineas \tr\expebrace{\lambda Y_t - \psi(\lambda) \cdot (U_t + W_t)}$
  is a supermartingale, with $L_0 = d$. Next we show that the key condition of
  \cref{th:canonical_assumption} holds,
  $L_t \geq \expebrace{\lambda \gamma_{\max}(Y_t) - \psi(\lambda)
    \gamma_{\max}(U_t + W_t)}$ a.s.\ for all $t$. We repeat a short argument
  from \citet{tropp_user-friendly_2012}. First, by the monotonicity of the trace
  exponential,
  \begin{align}
  &\tr\expebrace{\lambda Y_t - \psi(\lambda) \cdot (U_t + W_t)} \\
    &\qquad \geq \tr\expebrace{\lambda Y_t
      - \psi(\lambda) \gamma_{\max}(U_t + W_t) I_d} \\
    &\qquad \geq \gamma_{\max}\eparen{
      \expebrace{\lambda Y_t - \psi(\lambda) \gamma_{\max}(U_t + W_t) I_d}}
      \defineright B.
  \end{align}
  using the fact that the trace of a positive semidefinite matrix is at least as
  large as its maximum eigenvalue. Then the spectral mapping property gives
  \begin{align}
  B &= \expebrace{\gamma_{\max}\eparen{
      \lambda Y_t - \psi(\lambda) \gamma_{\max}(U_t + W_t) I_d}}.
  \end{align}
  Finally, we use the fact that
  $\gamma_{\max}(A - c I_d) = \gamma_{\max}(A) - c$ for any $A \in \Hcal^d$ and
  $c \in \R$ to see that
  ${B = \expebrace{\lambda \gamma_{\max}(Y_t) - \psi(\lambda) \gamma_{\max}(U_t
      + W_t)}}$, completing the argument.
\end{proof}

\subsection{Proof of \cref{th:novel_discrete_conds}}
\label{sec:proof_novel_discrete_conds}

We rely on the following transfer rule for the semidefinite ordering.
\begin{fact}[\citealp{tropp_user-friendly_2012}, eq.\ 2.2]\label{th:transfer}
If $f(a) \leq g(a)$ for all $a \in S$, then $f(A) \preceq g(A)$ when the
eigenvalues of $A$ lie in $S$.
\end{fact}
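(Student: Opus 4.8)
The plan is to unwind the definition of the matrix functional calculus recalled earlier in \cref{sec:sub_psi_conds} and thereby reduce the claimed semidefinite inequality to an elementary diagonal comparison. First I would fix a spectral decomposition $A = U \Lambda U^\star$, where $U$ is unitary and $\Lambda = \diag(\lambda_1, \dots, \lambda_d)$ collects the eigenvalues of $A$; the hypothesis that the spectrum of $A$ lies in $S$ guarantees $\lambda_i \in S$ for every $i$. By the standard extension of a scalar function to $\Hcal^d$, we have $f(A) = U f(\Lambda) U^\star$ and $g(A) = U g(\Lambda) U^\star$, where $f(\Lambda)$ and $g(\Lambda)$ are diagonal with entries $f(\lambda_i)$ and $g(\lambda_i)$, respectively.

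Next I would form the difference and isolate a diagonal matrix:
\begin{align}
  g(A) - f(A) = U \bigl( g(\Lambda) - f(\Lambda) \bigr) U^\star.
\end{align}
Since the pointwise hypothesis $f(a) \leq g(a)$ holds for every $a \in S$, it applies in particular at each eigenvalue $\lambda_i \in S$, so $g(\lambda_i) - f(\lambda_i) \geq 0$ for all $i$. The diagonal matrix $g(\Lambda) - f(\Lambda)$ thus has nonnegative entries and is therefore positive semidefinite.

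Finally I would invoke congruence invariance of the positive semidefinite cone: for any vector $x$ and any $M \succeq 0$, we have $x^\star U M U^\star x = (U^\star x)^\star M (U^\star x) \geq 0$, so $U M U^\star \succeq 0$. Applying this with $M = g(\Lambda) - f(\Lambda)$ yields $g(A) - f(A) \succeq 0$, which is exactly $f(A) \preceq g(A)$, as claimed. I do not expect a genuine obstacle here, since the argument is purely linear-algebraic; the only points deserving care are that the functional calculus is well defined and basis-independent (valid because $A$ is Hermitian and hence unitarily diagonalizable with real spectrum), and that the scalar hypothesis is consumed precisely at the eigenvalues of $A$, which is exactly what the assumption on the location of the spectrum secures.
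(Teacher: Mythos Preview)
Your proposal is correct and is precisely the standard spectral-decomposition argument behind this transfer rule. The paper does not supply its own proof; it simply records the statement as a cited fact from \citet{tropp_user-friendly_2012}, so your write-up actually provides more detail than the paper does while matching the intended approach.
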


We make frequent use of the martingale property $\E_{t-1} \Delta Y_t = 0$, and
prove in most cases that
\begin{align}
  \E_{t-1} \expebrace{\lambda \Delta Y_t - \psi(\lambda) \Delta U_t} \preceq
    \expebrace{\psi(\lambda) \Delta W_t}
  \label{eq:stronger_sub_psi}
\end{align}
for some $(U_t)$ and $(W_t)$, then invoke \cref{th:sub_psi_lemma}. This a
stronger condition than property \eqref{eq:sub_psi}; the latter is implied by
taking logarithms on both sides, recalling the monotonicity of the matrix
logarithm.

\textbf{Part (a)}: we adapt the argument of
\citet[p. 42]{bennett_probability_1962}. Fix $\lambda \geq 0$ and choose real
numbers $u,v,w$ so that $e^{\lambda x} \leq ux^2 + vx + w$ for all $x \leq h$,
with equality at $x = h$ and $x = -g$. Using the assumption
$\Delta Y_t \preceq h I_d$, the transfer rule implies
\begin{align}
  \E_{t-1} e^{\lambda \Delta Y_t}
    \preceq u \E_{t-1} \Delta Y_t^2 + v \E_{t-1} \Delta Y_t + w I_d
    \preceq (u gh + w) I_d,
  \label{eq:bennett_generic_upper_bound}
\end{align}
where the second inequality uses the assumption
$\E_{t-1} \Delta Y_t^2 \preceq gh I_d$ and the martingale property. Now
consider the random matrix
\begin{align}
  Z = \begin{cases}
    -g I_d, & \text{with probability } \frac{h}{h + g}, \\
    h I_d, & \text{with probability } \frac{g}{h + g}.
  \end{cases}
\end{align}
Evidently $\E Z = 0$ and $\E Z^2 = gh I_d$, so $Z$ also satisfies the
aforementioned assumptions. Note that for any function $f : \R \to \R$,
\begin{align}
  \E f(Z) = \ebracket{
    \frac{h}{h + g} \cdot f(-g)
    + \frac{g}{h + g} \cdot f(h)
  } I_d.
\end{align}
By our choice of $u,v,w$, we see that
$\E e^{\lambda Z} = \E (u Z^2 + v Z + w I_d) = (u gh + w) I_d$, so by
direct calculation,
\begin{align}
  (u gh + w) I_d = \E e^{\lambda Z}
  = \ebracket{
    \frac{h}{h + g} \cdot e^{-\lambda g}
    + \frac{g}{h + g} \cdot e^{\lambda h}
  } I_d
  = e^{g h \psi_B(\lambda)} I_d.
  \label{eq:bennett_maximal_mgf}
\end{align}
Combining \eqref{eq:bennett_maximal_mgf} with
\eqref{eq:bennett_generic_upper_bound} shows that \eqref{eq:stronger_sub_psi}
holds with $U_t \equiv 0$ and $W_t = ght I_d$, as desired.

\textbf{Part (b)}: As in Lemma 1 of \citet{hoeffding_probability_1963}, we
use the fact that
$e^{\lambda x} \leq \frac{g + x}{g + h} e^{h \lambda} + \frac{h - x}{g + h}
e^{-g \lambda}$ for all $x \in [-g, h]$, along with the transfer rule, to
conclude that, for each $t$,
\begin{align}
\E_{t-1} e^{\lambda \Delta Y_t}
  \preceq \eparen{\frac{G_t}{G_t+G_t} e^{H_t \lambda}
    + \frac{H_t}{G_t+H_t} e^{-G_t \lambda}} I_d
  = \expebrace{G_t H_t \psi_{B,G_t,H_t}(\lambda)} I_d.
\end{align}
Now the proof of \cref{th:psi_relations} part (1) shows that
$\psi_{B,g,h}(\lambda) \leq \varphi(g,h) \psi_N(\lambda) / gh$, so we have
\begin{align}
\E_{t-1} e^{\lambda \Delta Y_t} \preceq
  \expebrace{\psi_N(\lambda) \varphi(G_t,H_t) I_d},
\end{align}
which shows that \eqref{eq:stronger_sub_psi} holds with $U_t \equiv 0$ and
$\Delta W_t = \varphi(G_t, H_t) I_d$, as desired.

\textbf{Part (c)}: the argument is identical to that for part (a), except for
the use of $\psi_{B,g,h}(\lambda) \leq\frac{(g + h)^2}{4gh} \psi_N(\lambda)$
from the proof of \cref{th:psi_relations} part (2).

\textbf{Part (d)}: From the standard inequality $\cosh x \leq e^{x^2/2}$ we see
that $f(x) \defineas e^{-x^2/2} \cosh x \leq 1$ for all $x$. Introducing an
independent Rademacher random variable $\varepsilon$, we have for any $t$,

\begin{align}
\E_{t-1} \expebrace{\lambda \Delta Y_t - \frac{\lambda^2 \Delta Y_t^2}{2}}
  &= \E_{t-1} \expebrace{\lambda \varepsilon \Delta Y_t
    - \frac{\lambda^2 \Delta Y_t^2}{2}} \\
  &= \E_{t-1} \E\condbracket{
       \expebrace{\lambda \varepsilon \Delta Y_t
         - \frac{\lambda^2 \Delta Y_t^2}{2}}
     }{\Fcal_{t-1}, \Delta Y_t} \\
  &= \E_{t-1} f(\lambda \Delta Y_t) \\
  &\preceq I_d,
\end{align}
applying the transfer rule in the last step. This shows that
\eqref{eq:stronger_sub_psi} holds with $U_t = [Y]_t$ and $W_t \equiv 0$.

\textbf{Part (e)}: Lemma 4.1 of \citet{fan_exponential_2015} shows that
\begin{align}
  \expebrace{\lambda x - [\log(1-\lambda)^{-1} - \lambda] x^2}
    \leq 1 + \lambda x,
    \text{ for all } x \geq -1 \text{ and } 0 \leq \lambda < 1.
\end{align}
Applying the transfer rule and taking expectations, we have for any $t$,
\begin{align}
\E_{t-1} \expebrace{
  \lambda \cdot \frac{\Delta Y_t}{c}
  - [\log(1-\lambda)^{-1} - \lambda] \cdot \frac{\Delta Y_t^2}{c^2}
} \preceq I_d.
\end{align}
Replace $\lambda$ with $c \lambda$ and identify $\psi_E$ to complete the
argument that \eqref{eq:stronger_sub_psi} holds with $U_t = [Y]_t$ and
$W_t \equiv 0$.

\textbf{Part (f)}: Proposition 12 of \citet{delyon_exponential_2009} shows that
$e^{x - x^2 / 6} \leq 1 + x + x^2 / 3$ for all $x \in \R$. This implies, by the
transfer rule,
\begin{align}
\E_{t-1} \expebrace{\lambda \Delta Y_t - \frac{\lambda^2}{6} \Delta Y_t^2}
  &\preceq I_d + \frac{\lambda^2}{3} \E_{t-1} \Delta Y_t^2 \\
  &\preceq \expebrace{\frac{\lambda^2}{3} \E_{t-1} \Delta Y_t^2}.
\end{align}
This shows that \eqref{eq:stronger_sub_psi} holds with $U_t = [Y]_t/3$ and
$W_t = 2 \eangle{Y}_t / 3$.

\textbf{Part (g)}: Proposition 12 of \citet{delyon_exponential_2009}, together
with the fact that $e^{-x} + x - 1 \leq x^2 / 2$ for $x \geq 0$, shows that
$e^{x - x_+^2 / 2} \leq 1 + x + x_-^2 / 2$. Again the transfer rule implies
\begin{align}
\E_{t-1} \expebrace{\lambda \Delta Y_t - \frac{\lambda^2}{2} (\Delta Y_t)_+^2}
  &\preceq I_d + \frac{\lambda^2}{2} \E_{t-1} (\Delta Y_t)_-^2 \\
  &\preceq \expebrace{\frac{\lambda^2}{2} \E_{t-1} (\Delta Y_t)_-^2}.
\end{align}
This shows that \eqref{eq:stronger_sub_psi} holds with $U_t = [Y_+]_t/2$ and
$W_t = \eangle{Y_-}_t / 2$.

\textbf{Part (h)}: we appeal to part (d) to see that $S_t$ is $d$-sub-Gaussian
with variance process
$V_t = \gamma_{\max}(\frac{1}{3} [Y]_t + \frac{2}{3} \eangle{Y}_t)$. Now the
condition $\Delta Y_t^2 \preceq A_t^2$ ensures that
$\frac{1}{3} [Y]_t + \frac{2}{3} \eangle{Y}_t \preceq \sum_{i=1}^t A_i^2$, hence
$V_t \leq \gamma_{\max}(\sum_{i=1}^t A_i^2)$. Substituting this larger variance
process only makes the exponential process in \cref{th:canonical_assumption}
smaller, so the assumption remains satisfied.

\textbf{Part (i)}: the proof of Corollary 2.2 in \citet{fan_exponential_2015} is
based on the inequality $e^{x - x^2/2} \leq 1 + x + x_-^3 / 3$ for all
$x \in \R$. The transfer rule implies
\begin{align}
\E_{t-1} \expebrace{\lambda \Delta Y_t - \frac{\lambda^2}{2} \Delta Y_t^2}
  \preceq I_d + \frac{\lambda^3}{3} \E_{t-1} (\Delta Y_t)_-^3
  \preceq \expebrace{\frac{\lambda^3}{3} \E_{t-1} (\Delta Y_t)_-^3}.
\end{align}
Setting $c = 1/6$ in $\psi_G$, we have for all $x \in [0,6)$ the obvious
inequality $x^2/2 \leq \psi_G(x)$ and we claim $x^3/3 \leq \psi_G(x)$ as well;
indeed,
\begin{align}
\frac{x^3/3}{x^2/2(1-x/6)} = \frac{x(6-x)}{9},
\end{align}
which reaches a maximum value of one at $x = 3$. The transfer rule now implies
\begin{align}
\E_{t-1} \expebrace{\lambda \Delta Y_t - \psi_G(\lambda) \Delta Y_t^2}
  &\preceq \E_{t-1} \expebrace{\lambda \Delta Y_t
    - \frac{\lambda^2}{2} \Delta Y_t^2} \\
  &\preceq \expebrace{\frac{\lambda^3}{3} \E_{t-1} (\Delta Y_t)_-^3} \\
  &\preceq \expebrace{\psi_G(\lambda) \E_{t-1} (\Delta Y_t)_-^3},
\end{align}
which shows that \eqref{eq:stronger_sub_psi} holds with $U_t = [Y]_t$ and
$V_t = \sum_{i=1}^t \E_{i-1} \abs{\Delta Y_i}^3$.

\subsection{Proof of \cref{th:rectangular_series}}
\label{sec:proof_rectangular_series}

Define the $\Hcal^{d_1 + d_2}$-valued process $(Y_t)$ using the dilation of
$B_t$:
\begin{align}
\Delta Y_t \defineas \epsilon_t \pmat{0 & B_t \\ B_t^\star & 0}.
\end{align}
Since the dilation operation is linear and preserves spectral information, we
have $S_t = \gamma_{\max}(Y_t) = \opnorm{\sum_{i=1}^t \epsilon_i B_i}$
\citep[Eq.~2.12]{tropp_user-friendly_2012}. Furthermore, since each $B_i$ is
fixed and $\epsilon_i$ is 1-sub-Gaussian (in the usual sense for scalar random
variables), $(Y_t)$ satisfies the conditions of \cref{th:sub_psi_lemma} with
$\psi = \psi_N$, $U_t \equiv 0$, and
\begin{align}
W_t = \sum_{i=1}^t \pmat{B_i B_i^\star & 0 \\ 0 & B_i^\star B_i},
\end{align}
by \citet[Lemma 4.3]{tropp_user-friendly_2012}. Hence $(S_t)$ is
$(d_1+d_2)$-sub-Gaussian with variance process
\begin{align}
  V_t = \opnorm{W_t} = \max\ebrace{
    \enorm{\sum_{i=1}^t B_i B_i^\star}_{\text{op}},
    \enorm{\sum_{i=1}^t B_i^\star B_i}_{\text{op}}}.
\end{align}
The result now follows from \cref{th:uniform_chernoff}(b).

\subsection{Proof of \cref{th:late_crossing_simplified}}
\label{sec:proof_late_crossing_simplified}

First, observe $\slope^{-1}(u) = \psi'(\decay(u))$ for any $u \in (0,
\bar{b})$. Indeed, from the definition of $\slope(\cdot)$ and
\cref{th:slope_facts}(v) we see that if $u = \slope(v)$ then
$\decay(u) = \psistarprime(v) = {\psi'}^{-1}(v)$, so that
$v = \psi'(\decay(u))$. This identity will be used below.

Now let $h(b) \defineas m \psi^\star(b) + (x - bm) \psistarprime(b)$. We will
show the following:
\begin{enumerate}[label=(\Roman*)]
\item If $m = 0$ or $b \leq \slope\pfrac{x}{m}$, then
  $h(b) \leq (x - (b \bmin \bar{b})m) \decay(b)$.
\item If $m > 0$ then
  $h(b) \leq m \psi^\star\eparen{\frac{x}{m}} = h\pfrac{x}{m}$.
\end{enumerate}
Together with \cref{th:uniform_chernoff}(d) these prove that
\eqref{eq:late_crossing_approx} holds, and \eqref{eq:line_thru_origin} follows
upon setting $x = bm$.

First suppose $m = 0$, so it suffices to show $\psistarprime(b) \leq \decay(b)$
to prove (I) in this case. But \cref{th:slope_facts}(vi) implies
$u \leq \slope^{-1}(u)$ for any $u \in [0, \bar{b})$, and together with the
convexity of $\psi^\star$, we have
$\psistarprime(b) \leq \psistarprime(\slope^{-1}(b))$. Then the identities
$\slope^{-1}(u) = \psi'(\decay(u))$ and $\psistarprime = {\psi'}^{-1}$ imply
$\psistarprime(\slope^{-1}(b)) = \decay(b)$.

Now suppose $m > 0$. It is easy to see that
$h'(b) = (x - bm) {\psi^\star}''(b)$. The convexity of $\psi^\star$ now implies
$h$ is nondecreasing for $b \leq x/m$ and nonincreasing for $b \geq x/m$. Hence
$h(b)$ is maximized at $b = x/m$, which proves (II). To prove (I) in this case,
we claim that $h(\slope^{-1}(b)) = (x - bm) \decay(b)$. Then the condition
$b \leq \slope(x/m)$ and \cref{th:slope_facts}(vi) imply
$b < \slope^{-1}(b) \leq x/m$, so that $h(b) \leq h(\slope^{-1}(b))$ since $h$
is nondecreasing on this region, which is (I).

To prove the claim, substitute the identity $\slope^{-1}(u) = \psi'(\decay(u))$
into the definition of $h(\cdot)$, yielding
\begin{align}
h(\slope^{-1}(b)) = h(\psi'(\decay(b)))
  &= m \psi^\star(\psi'(\decay(b))) + [x - m \psi'(\decay(b))] \decay(b).
\end{align}
Now use the identity
$\psi^\star(u) = u \psistarprime(u) - \psi(\psistarprime(u))$ to obtain
\begin{align}
h(\slope^{-1}(b)) &= x \decay(b) - m \psi(\decay(b)) \\
  &= x \decay(b) - m b \decay(b),
\end{align}
where the final equality uses \cref{th:slope_facts}(v), proving the claim.

The second statement \eqref{eq:line_thru_origin} follows directly from
\cref{th:uniform_chernoff}(d) with $x = mb$. When $b \leq \bar{b}$,
\cref{th:slope_facts}(vi) implies $s(x/m) \leq x/m = b$, so the second case in
\eqref{eq:late_crossing_bound} applies. When $b > \bar{b}$, we have
$x > m \bar{b}$, so the first case in \eqref{eq:late_crossing_bound}
applies. Noting that $D(b) = \infty = \psi^\star(b)$ in this case using
\cref{th:slope_facts}(i), we see that \eqref{eq:line_thru_origin} remains
valid. \qed

\subsection{Proof of \cref{th:pinelis}}\label{sec:proof_pinelis}

We invoke arguments from \citet{pinelis_optimum_1994} and
\citet{pinelis_approach_1992} to show that \cref{th:canonical_assumption} is
satisfied.

For \textbf{part (a)}, the proofs of Theorem 3 in \citet{pinelis_optimum_1994}
and Theorem 3 in \citet{pinelis_approach_1992} show that, for each $t \in \N$,
\begin{align}
\E_{t-1} \cosh\big( \lambda \Psi(Y_t) \big)
  &\leq e^{\lambda^2 D_\star^2 c_t^2 / 2}
    \cosh\big( \lambda \Psi(Y_{t-1}) \big).
\end{align}
Hence
$L_t \defineas \cosh\big( \lambda \Psi(Y_t) \big) e^{-\lambda^2 D_\star^2
  \sum_{i=1}^t c_i^2 / 2}$ is a supermartingale, and the inequality
$\cosh x > e^x / 2$ implies that \cref{th:canonical_assumption} is satisfied for
$S_t = \Psi(Y_t)$, $V_t = D_\star^2 \sum_{i=1}^t c_i^2$ and $\psi = \psi_N$ with
$\lambda_{\max} = \infty$ and $l_0 = 2$. The conclusion
\eqref{eq:pinelis_hoeffding} follows from replacing $m$ with $D_\star^2 m$ to
make $D_\star^2$ explicit in the bound.

For \textbf{part (b)}, the proof of Theorem 3 in \citet{pinelis_optimum_1994}
shows that
\begin{align}
\E_{t-1} \cosh\big( \lambda \Psi(Y_t) \big)
  &\leq \expebrace{
    D_\star^2 \E_{t-1}\ebracket{
      e^{\lambda \norm{\Delta Y_t}} - \lambda \norm{\Delta Y_t} - 1}
  } \cosh\big( \lambda \Psi(Y_{t-1}) \big) \\
  &\leq \expebrace{
    D_\star^2 \pfrac{e^{c \lambda} - c \lambda - 1}{c^2}
    \E_{t-1} \norm{\Delta Y_t}^2
  } \cosh\big( \lambda \Psi(Y_{t-1}) \big).
\end{align}
using the fact that $(e^{c \lambda} - c \lambda - 1)/c^2$ is
nondecreasing. Hence the process
$L_t \defineas \cosh\big( \lambda \Psi(Y_t) \big) e^{-\psi_P(\lambda) D_\star^2
  \sum_{i=1}^t \E_{i-1} \norm{X_i}^2}$ is a supermartingale, and we see that
\cref{th:canonical_assumption} is satisfied for $S_t = \Psi(Y_t)$,
$V_t = D_\star^2 \sum_{i=1}^t \E_{i-1} \norm{X_i}^2$ and $\psi = \psi_P$ with
$\lambda_{\max} = \infty$ and $l_0 = 2$. The conclusion
\eqref{eq:pinelis_bennett} follows as in part (a). \qed

\subsection{Proof of \cref{th:iid_invariance}}\label{sec:proof_iid_invariance}

We invoke Theorem 2 of \citet{robbins_boundary_1970} for the sum $S_n / \sigma$
with $g(t) = a/\sigma + b \sigma t$, noting that
\begin{align}
\lim_{m \to \infty} \P\eparen{
  \exists t \in \N: \frac{S_n}{\sqrt{m}} \geq a + \frac{b t \sigma^2}{m}
}
&= \lim_{m \to \infty} \P\eparen{
    \exists t \in \N: \frac{S_n}{\sigma} \geq \sqrt{m} g\pfrac{t}{m}}.
\end{align}
It is easy to verify the conditions of parts (i) and (ii) of Robbins and
Siegmund's theorem, yielding the conclusion
\begin{align}
\lim_{m \to \infty} \P\eparen{
  \exists t \in \N: \frac{S_n}{\sigma} \geq \sqrt{m} g\pfrac{t}{m}}
&= \P\eparen{\exists t \in (0, \infty): B_t \geq g(t)},
\end{align}
where $(B_t)$ is standard Brownian motion. The latter probability is equal to
$e^{-2ab}$ by the standard line-crossing formula for Brownian motion (e.g.,
\citealp{durrett_probability:_2017}, Exercise 7.5.2). \qed

\subsection{Proof of \cref{th:sandwich}}\label{sec:proof_sandwich}

From the definition of $\decay(\cdot)$, we see that
$M_t = \expebrace{\decay(b) \cdot (S_t - b V_t)}$. Since $\tau$ is a stopping
time, $(M_{t \bmin \tau})$ is a martingale, so $1 = \E M_{t \bmin \tau}$ for
each $t \in \N$. The third condition of the proposition ensures that
$M_{t \bmin \tau} \leq e^{\decay(b) \cdot (a + \epsilon)}$ for all $t$ a.s., so
by dominated convergence we have $\E M_{t \bmin \tau} \to \E M_\tau = 1$, where
$M_\tau$ is defined as the a.s.\ limit of $(M_{t \bmin \tau})$, whose existence
is guaranteed since the stopped process is a nonnegative martingale. The second
condition of the proposition implies $M_t \convas 0$, hence
\begin{align}
1 = \E M_\tau
  &= \E M_\tau \indicatorp{\tau < \infty} +
     \E M_\infty \indicatorp{\tau = \infty} \\
  &\leq \expebrace{\decay(b) \cdot (a + \epsilon)} \P(\tau < \infty),
\end{align}
which gives the desired lower bound on $\P(\tau < \infty)$.

\subsection{Proof of \cref{th:continuous_martingale}}
\label{sec:proof_continuous_martingale}

The conclusion follows immediately from \cref{th:sandwich} with $\epsilon = 0$
once we show that the conditions of the proposition are satisfied for $(S_t)$
with $V_t = [S]_t$ and $\psi = \psi_N$.

In this case, since $(S_t)$ has continuous paths a.s, $(M_t)$ is the stochastic
exponential of the process $(\decay(b) S_t)$ \citep[Ch.\ II, Theorem
37]{protter_stochastic_2005}. Kazamaki's criterion is sufficient to ensure
$(M_t)$ is a martingale \citep[Ch.\ III, Theorem
44]{protter_stochastic_2005} and $M_0 = 1$ since $S_0 = 0$. This shows that
condition (1) of \cref{th:sandwich} holds. Condition (3) follows directly from
the continuity of paths of $(S_t)$.

It remains to show that condition (2) holds. For this we express $(S_t)$ as a
time change of Brownian motion \citep[Ch.\ II, Theorem
42]{protter_stochastic_2005}: $S_t = B_{[S]_t}$ where $(B_t)$ is a standard
Brownian motion (with respect to a different filtration). From the law of the
iterated logarithm we know that $B_t / t \convas 0$ as $t \to \infty$, hence
$S_t - b [S]_t = [S]_t (B_{[S_t]} / [S]_t - b) \to -\infty$ since
$[S]_t \uparrow \infty$. \qed

\subsection{Proof of \cref{th:tangent}}\label{sec:proof_tangent}

Lemma 2.4 of \citet{boucheron_concentration_2013} shows that
\begin{align}
f_\alpha(t) = \inf_{\lambda} \ebracket{
  \frac{\log \alpha^{-1}}{\lambda} + \frac{\psi(\lambda)}{\lambda} \cdot t
}, \label{eq:f_alpha_inf}
\end{align}
so that $f_\alpha(t)$ is a pointwise infimum of lines indexed by $\lambda$ with
intercepts $a_\lambda = (\log \alpha^{-1}) / \lambda$ and slopes
${b_\lambda = \psi(\lambda) / \lambda}$. Hence $\decay(b_\lambda) = \lambda$,
and by \cref{th:uniform_chernoff} the crossing probability of each such line is
${e^{-a_\lambda \decay(b_\lambda)} = \alpha}$. Note we have also shown that
$f_\alpha$ is concave. The optimizer $\lambda_\star(t)$ in
\eqref{eq:f_alpha_inf} is the solution in $\lambda$ of
$\lambda \psi'(\lambda) - \psi(\lambda) = (\log \alpha^{-1}) / t$. The left-hand
side of this equation has positive derivative in $\lambda$ by the convexity of
$\psi$, so the map $t \mapsto \lambda_\star(t)$ is injective. Hence the optimum
line $a_{\lambda_\star(m)} + b_{\lambda_\star(m)} t$ is tangent to the curve
$f_\alpha(t)$ at $t = m$.

\section{Acknowledgments}

We thank Victor de la Pe\~{n}a and Joel Tropp for their detailed perspectives on
our work. We also thank Eli Ben-Michael, Lester Mackey, Xinjia Chen, Itai
Kreisler, and referees for helpful comments. Howard thanks Office of Naval
Research (ONR) Grant N00014-15-1-2367. Sekhon thanks ONR grants N00014-17-1-2176
and N00014-15-1-2367.

\setlength{\bibsep}{0pt plus 0.3ex}

\bibliographystyle{agsm}
\bibliography{best_arm}

\appendix

\section{Sharpened pre-factors based on rank}\label{sec:rank_prefactor}

This argument is adapted from \citet{wainwright_high-dimensional_2017}, though
the idea originates in \citet{oliveira_sums_2010}. Suppose the conditions of
\cref{th:sub_psi_lemma} hold and
\begin{align}
  \sup_{t \in \Tcal} \rank(U_t + W_t) \leq r < d, \text{ a.s.}
\end{align}
Since $\Delta U_t \succeq 0$ and $\Delta W_t \succeq 0$ for all $t$, we know
that $\range(U_t + W_t) \subseteq S$ for all $t$ a.s., where $S$ is an
$r$-dimensional subspace. Inequality \eqref{eq:sub_psi} implies that
$\range(Y_t) \subseteq S$ for all $t$ a.s.\ as well. Let $M$ be a $d \times r$
matrix whose columns form an orthonormal basis for this subspace. Then the
$r$-dimensional process $\widetilde{Y}_t \defineas M^\star Y_t M$ has the same
spectrum as $Y_t$ for all $t$ a.s., so we may apply our bounds to
$(\widetilde{Y}_t)$, with $(\widetilde{U}_t)$ and $(\widetilde{W}_t)$ defined
analogously, to obtain bounds with $l_0 = r$. \qed

\section{Relation to the Dubins-Savage inequality}\label{sec:dubins_savage}

The Dubins-Savage inequality \citep{dubins_tchebycheff-like_1965} says that for
any martingale $S_t$ in discrete time with $S_0 = 0$, setting
$V_t = \sum_{i=1}^t \Var_{i-1} (S_t - S_{t-1})$, we have
\begin{align}
  \P\eparen{\exists t \in \N: S_t \geq a + b V_t} \leq \frac{1}{1 + ab}.
  \label{eq:dubins_savage}
\end{align}
The Dubins-Savage inequality may be proved by means similar to ours, invoking
Ville's inequality for a suitable supermartingale. The relationship of our
bounds to the Dubins-Savage inequality is analogous to that between fixed-time
Cram\'er-Chernoff bounds and Chebyshev's inequality. More precisely, the
Dubins-Savage inequality is analogous to Uspensky's one-sided version of
Chebyshev's inequality \citep{uspensky_introduction_1937,
  bennett_probability_1962}:
\begin{align}
\P(X - \E X \geq x) \leq \frac{\Var X}{\Var X + x^2}. \label{eq:uspensky}
\end{align}
Similar to our \cref{th:uniform_chernoff}(b), we may optimize the RHS of
\eqref{eq:dubins_savage} over all lines passing through a point $(m, x)$ to
obtain the equivalent bound
\begin{align}
  \P\eparen{\exists t \in \N: S_t \geq x + \frac{x}{2m}\eparen{V_t - m}}
  \leq \frac{m}{m + x^2 / 4},
\end{align}
recovering Uspensky's inequality \eqref{eq:uspensky} with $x/2$ in place of
$x$. The Dubins-Savage inequality does not recover Uspensky's inequality at the
fixed time $m$---something is necessarily lost in going from a fixed time to a
uniform bound. Compare our \cref{th:uniform_chernoff}(b), which exactly recovers
the fixed-time Cram\'er-Chernoff bound \eqref{eq:basic_chernoff}. For these
exponential bounds, we lose nothing in going from a fixed time to a uniform
bound.

\section{Graphical comparison of $\psi$ functions}

\begin{figure}[h!]
  \centering
  \includegraphics{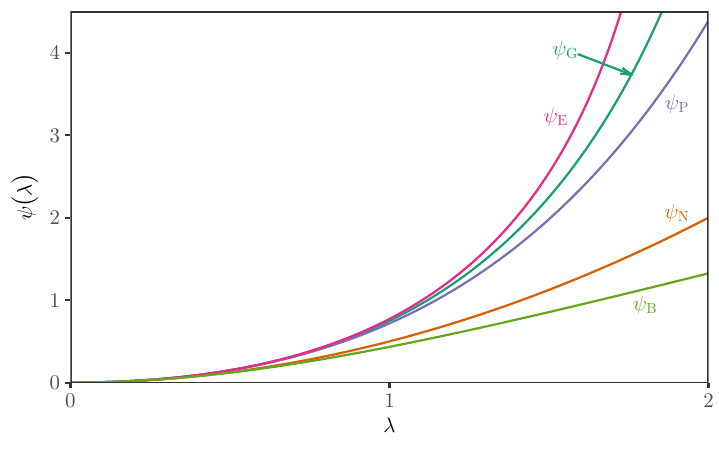}
  \caption{Comparison of $\psi$ functions given in \cref{tab:psi_transforms}. We
    have set $g = h = 1$ in $\psi_B$, $c = 1$ in $\psi_P$, $c=1/3$ in $\psi_G$,
    and $c=1/2$ in $\psi_E$. These are all values that might be used in bounding
    a process with $[-1,1]$-valued increments using the same variance process;
    see \cref{fig:psi_relations} and \cref{th:psi_relations}. In general, bounds
    based on different $\psi$ functions may have different assumptions and
    variance processes, so may not be comparable based on $\psi$ functions
    alone. However, with identical variance processes, a smaller $\psi$ function
    yields a tighter bound. Note all functions behave like
    $\psi_N(\lambda) = \lambda^2 / 2$ near the
    origin. \label{fig:psi_functions}}
\end{figure}

\Cref{fig:psi_functions} illustrates together the five standard $\psi$ functions
discussed in \cref{sec:sufficient_conditions}, to help the reader gain
intuition. With the given parameter settings, the inequalities apparent in the
figure do hold for all $\lambda \geq 0$:
$\psi_B(\lambda) \leq \psi_N(\lambda) \leq \psi_P(\lambda) \leq \psi_G(\lambda)
\leq \psi_E(\lambda)$. See the proof of \cref{th:psi_relations} in
\cref{sec:proof_psi_relations}.

\section{A more general boundary-crossing result}\label{sec:more_general}

The following assumption weakens \cref{th:canonical_assumption}, replacing the
product $\psi(\lambda) \Delta V_t$ with a function $f(\lambda, \Delta V_t)$.

\begin{assumption}\label{th:generalized_assumption}
  Let $(S_t)_{t \in \N \union \brace{0}}$ and
  $(V_t)_{t \in \N \union \brace{0}}$ be two real-valued processes adapted to an
  underlying filtration $(\Fcal_t)_{t \in \N \union \brace{0}}$ with
  $S_0 = V_0 = 0$ a.s.\ and $V_t \geq 0$ a.s.\ for all $t \in \N$. Let
  $f: [0, \lambda_{\max}) \times (0, \infty) \to \R$ be concave in its second
  argument for each value of the first, and let $l_0 \in [1, \infty)$. We
  assume, for each $\lambda \in [0, \lambda_{\max})$, there exists a
  supermartingale $(L_t(\lambda))_{t \in \N \union \brace{0}}$ with respect to
  $(\Fcal_t)$ such that $L_0 \leq l_0$ a.s.\ and
  $\expebrace{\lambda S_t - \sum_{i=1}^t f(\lambda, \Delta V_i)} \leq
  L_t(\lambda)$ a.s.\ for all $t \in \N$.
\end{assumption}

Clearly, when $f(\lambda, v) \equiv \psi(\lambda) \cdot v$ for some $\psi$,
\cref{th:canonical_assumption} holds and \cref{th:uniform_chernoff}
applies. Under the weaker \cref{th:generalized_assumption} we have the following
results:
\begin{theorem}\label{th:generalized_bound}
  If \cref{th:generalized_assumption} holds for some real-valued processes
  $(S_t)$ and $(V_t)$, then for any $\lambda \in [0, \lambda_{\max})$ and
  $a > 0$, we have
\begin{align}
  \P\eparen{
    \exists t \in \N: S_t \geq a + \frac{t f(\lambda, V_t / t)}{\lambda}
  } \leq l_0 e^{-a\lambda}.
\end{align}
Furthermore, if $f_v(\cdot) \defineas f(\cdot, v)$ is CGF-like for each $v > 0$,
then for any $n \in \N$, $m > 0$ and
$0 \leq x < n \sup_\lambda f_{m/n}'(\lambda)$, we have
\begin{align}
  \P\eparen{
    \exists t \leq n : S_t \geq x + \frac{n}{\lambda_\star} \ebracket{
      f\eparen{\lambda_\star, \frac{V_t}{n}}
      - f\eparen{\lambda_\star, \frac{m}{n}}
    }
  } &\leq l_0 \expebrace{-n f_{m/n}^\star\pfrac{x}{n}} \\
  \P\eparen{
    \exists t \in \N :
    S_t \geq x + \frac{tf(\lambda_\star, m/t) - nf(\lambda_\star, m/n)}
                      {\lambda_\star}
  } &\leq l_0 \expebrace{-n f_{m/n}^\star\pfrac{x}{n}}
\end{align}
where $\lambda_\star \defineas (f_{m/n}^\star)'(x/n)$.
\end{theorem}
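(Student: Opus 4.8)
The plan is to reduce every conclusion to Ville's inequality (\cref{th:ville}) applied to the supermartingale $(L_t(\lambda))$ supplied by \cref{th:generalized_assumption}, and then to convert the resulting ``sum'' boundary into the stated ``perspective'' boundaries using concavity of $f$ in its second argument. The single elementary fact driving every step is that, since $V_0 = 0$, we have $V_t = \sum_{i=1}^t \Delta V_i$, so Jensen's inequality for the concave map $v \mapsto f(\lambda, v)$ gives $\sum_{i=1}^t f(\lambda, \Delta V_i) \le t\, f\eparen{\lambda, V_t/t}$. Equivalently, the perspective $\Phi_\lambda(s,w) \defineas s\, f(\lambda, w/s)$ is concave and positively homogeneous, hence superadditive, which yields the same bound and is the cleanest way to package the estimate.

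For the first conclusion I would fix $\lambda$ and $a$, apply Ville's inequality to $(L_t(\lambda))$ at threshold $e^{a\lambda}$ exactly as in \eqref{eq:simple_linear_bound} to get $\P\eparen{\exists t: \expebrace{\lambda S_t - \sum_{i=1}^t f(\lambda, \Delta V_i)} \ge e^{a\lambda}} \le l_0 e^{-a\lambda}$, rewrite the event as $\{S_t \ge a + \lambda^{-1}\sum_{i=1}^t f(\lambda, \Delta V_i)\}$, and then use the Jensen bound to observe that $a + \lambda^{-1} t\, f(\lambda, V_t/t) \ge a + \lambda^{-1}\sum_{i=1}^t f(\lambda, \Delta V_i)$; crossing the former boundary therefore implies crossing the latter, which gives the stated first bound by monotonicity of probability.

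For the second part, the role of $\lambda_\star = (f^\star_{m/n})'(x/n)$ is to make the intercept reproduce the Legendre--Fenchel exponent. Since each $f_v$ is CGF-like, the duality $(f^\star_v)' = (f_v')^{-1}$ used in the proof of \cref{th:uniform_chernoff} (for $\psistarprime = (\psi')^{-1}$) gives $f'_{m/n}(\lambda_\star) = x/n$, the first-order condition for the supremum defining $f^\star_{m/n}(x/n)$, so that $\lambda_\star (x/n) - f(\lambda_\star, m/n) = f^\star_{m/n}(x/n)$. For the finite-horizon display I would apply Ville at $\lambda_\star$ with threshold $e^{z}$, $z \defineas \lambda_\star x - n\, f(\lambda_\star, m/n) = n\, f^\star_{m/n}(x/n)$, giving probability bound $l_0 e^{-z} = l_0 \expebrace{-n f^\star_{m/n}(x/n)}$; to match the boundary I would pad with zeros, applying Jensen to the $n$ values $\Delta V_1,\dots,\Delta V_t,0,\dots,0$ to obtain $\sum_{i=1}^t f(\lambda_\star, \Delta V_i) \le n\, f(\lambda_\star, V_t/n)$ for $t \le n$ (using $f(\lambda_\star, 0^+) \ge 0$), and substituting this shows that crossing $x + \lambda_\star^{-1} n[f(\lambda_\star, V_t/n) - f(\lambda_\star, m/n)]$ implies crossing the supermartingale boundary. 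The infinite-horizon display would instead be read off the first conclusion with the same $\lambda_\star$ and $a = x - \lambda_\star^{-1} n\, f(\lambda_\star, m/n)$, whose boundary is $x + \lambda_\star^{-1}[t\, f(\lambda_\star, V_t/t) - n\, f(\lambda_\star, m/n)]$, and then replacing $V_t/t$ by $m/t$.

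The main obstacle is precisely this last replacement, i.e.\ controlling the perspective boundary $t\, f(\lambda_\star, m/t)$ against $\sum_{i=1}^t f(\lambda_\star, \Delta V_i)$ (equivalently against $t\, f(\lambda_\star, V_t/t)$) uniformly in $t$. Because $v \mapsto f(\lambda_\star, v)$ is monotone, the clean domination $t\, f(\lambda_\star, m/t) \ge \sum_{i=1}^t f(\lambda_\star, \Delta V_i)$ is transparent on $\{V_t \le m\}$, while the degenerate linear case $f(\lambda, v) = \psi(\lambda) v$ (where $t\, f(\lambda_\star, m/t) = m\psi(\lambda_\star)$ is \emph{constant} in $t$) shows exactly where the step is delicate; I would therefore state the behaviour of $f$ at the origin and the monotonicity hypothesis carefully, since these---rather than Ville's inequality---are what the infinite-horizon display truly rests on. The remaining ingredients (the duality identity for $\lambda_\star$ and the intercept computation) are routine given \cref{th:cgf_like,th:slope_facts}.
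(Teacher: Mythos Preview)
The paper does not give a proof of this theorem, stating only that it ``follows the same principles as that of \cref{th:uniform_chernoff} and is omitted for brevity.'' Your proposal does exactly this: Ville's inequality applied to $(L_t(\lambda))$, followed by Jensen's inequality exploiting the concavity of $f(\lambda,\cdot)$ to pass from $\sum_{i\le t} f(\lambda,\Delta V_i)$ to the perspective $t\,f(\lambda,V_t/t)$ (or $n\,f(\lambda,V_t/n)$ via zero-padding for $t\le n$). Your computation of $\lambda_\star$ and the intercept through the Legendre--Fenchel identity $\lambda_\star(x/n)-f(\lambda_\star,m/n)=f_{m/n}^\star(x/n)$ is the natural analogue of the $(a)\Rightarrow(b)$ step in \cref{th:uniform_chernoff}, and the zero-padding argument for the finite-horizon display is valid once one observes that $f(\lambda_\star,0^+)\ge 0$ (each $f_v$ being CGF-like and $v\mapsto f(\lambda_\star,v)$ concave).

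Your concern about the infinite-horizon display is not a gap in your argument but a genuine defect in the theorem as printed. Specialize to the separable case $f(\lambda,v)=\psi(\lambda)v$ covered by \cref{th:canonical_assumption}: then $t\,f(\lambda_\star,m/t)=m\psi(\lambda_\star)=n\,f(\lambda_\star,m/n)$, so the stated boundary collapses to the constant $x$ and the claim becomes $\P(\exists t\in\N:S_t\ge x)\le l_0\exp\{-m\psi^\star(x/m)\}$, which is false already for $S_t$ a standard Gaussian random walk. By contrast, your argument naturally produces the boundary
\[
  x+\frac{t\,f(\lambda_\star,V_t/t)-n\,f(\lambda_\star,m/n)}{\lambda_\star},
\]
which in the separable case reduces exactly to the line $x+\slope(x/m)(V_t-m)$ of \cref{th:uniform_chernoff}(b). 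So the replacement of $V_t$ by $m$ in the third display is almost certainly a typo, and you should prove (and state) the version with $V_t/t$; the ``obstacle'' you identify is real and cannot be removed without additional hypotheses.
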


The proof follows the same principles as that of \cref{th:uniform_chernoff} and
is omitted for brevity. One application of this result is to martingales with
bounded increments, making use of $\psi_B$:

\begin{corollary}
  Let $(Y_t)_{t \in \N}$ be an $\Hcal^d$-valued martingale and let
  $S_t \defineas \gamma_{\max}(Y_t)$. Suppose $\gamma_{\max}(\Delta Y_t) \leq c$
  for all $t$ for some $c > 0$, and let
  $V_t \defineas \gamma_{\max}(\eangle{Y}_t)$. Then for any $x, m > 0, n \in \N$
  we have
\begin{multline}
\P\eparen{
  \exists t \leq n: S_t \geq x + n\ebracket{g\pfrac{V_t}{n} - g\pfrac{m}{n}}
} \\
\leq \ebracket{\pfrac{m}{x+m}^{x+m} \pfrac{n}{n-x}^{n-x}}^{n/(n+m)},
\end{multline}
and
\begin{multline}
\P\eparen{
  \exists t \in \N:
  S_t \geq x + tg\pfrac{m}{t} - ng\pfrac{m}{n}
} \\
\leq \ebracket{\pfrac{m}{x+m}^{x+m} \pfrac{n}{n-x}^{n-x}}^{n/(n+m)},
\end{multline}
where
\begin{align}
g(v) \defineas \frac{m+cn}{n(v+c) \log \xi} \ebracket{
  v \xi^{\frac{cn}{m+cn}} + c \xi^{-\frac{vn}{m+cn}}
} \quad \text{and} \quad \xi \defineas \frac{1+x/m}{1-x/cn}.
\end{align}
This generalizes Theorem 2.1 of \citet{fan_hoeffdings_2012} [B, D].
\end{corollary}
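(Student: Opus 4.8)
The plan is to recognize the corollary as an instance of \cref{th:generalized_bound} and to supply the two ingredients that result requires: a function $f(\lambda, v)$ for which \cref{th:generalized_assumption} holds, and the explicit evaluation of $f_{m/n}^\star$ and $\lambda_\star$ that turns the abstract boundary into the stated closed form. The natural choice for $f$ is the logarithm of the \emph{extremal} conditional moment generating function of a mean-zero increment bounded above by $c$ with conditional variance $v$: among all such laws the m.g.f.\ is maximized by the two-point distribution supported on $\{c, -v/c\}$, giving
\begin{align}
f(\lambda, v) = \log \frac{v e^{c\lambda} + c^2 e^{-\lambda v / c}}{v + c^2},
\end{align}
which is exactly $v\,\psi_{B, v/c, c}(\lambda)$ in the notation of \cref{sec:psi_functions}. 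First I would record the two structural properties that \cref{th:generalized_bound} demands: that $f(\lambda, \cdot)$ is concave in $v$ (a direct differentiation), and that $f(\cdot, v)$ is CGF-like for each fixed $v$, the latter being immediate since it is the CGF of a genuine mean-zero two-point law with positive upper support point.

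Next I would verify \cref{th:generalized_assumption} for $S_t = \gamma_{\max}(Y_t)$ and $V_t = \gamma_{\max}(\eangle{Y}_t)$, following the trace-exponential template of \cref{th:sub_psi_lemma}. The candidate dominating process is $L_t(\lambda) = \tr\exp\{\lambda Y_t - (\sum_{i \le t} f(\lambda, \Delta V_i)) I_d\}$, whose initial value is $d = l_0$ and which dominates $\exp\{\lambda S_t - \sum_i f(\lambda, \Delta V_i)\}$ by the same spectral argument used in the proof of \cref{th:sub_psi_lemma}, since the subtracted term is a scalar multiple of the identity. Lieb's concavity theorem (\cref{th:lieb}) together with Jensen's inequality reduces the supermartingale property to the per-step operator inequality $\E_{t-1} e^{\lambda \Delta Y_t} \preceq e^{f(\lambda, \Delta V_t)} I_d$. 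I would establish this by choosing, for each $t$, the quadratic $q_t$ tangent to $x \mapsto e^{\lambda x}$ at the two support points $c$ and $-\sigma_t^2/c$ of the extremal law (with $\sigma_t^2$ the predictable per-step variance), so that $e^{\lambda x} \le q_t(x)$ on $(-\infty, c]$; the transfer rule (\cref{th:transfer}) applied with $\Delta Y_t \preceq c I_d$, $\E_{t-1}\Delta Y_t = 0$, and $\E_{t-1}\Delta Y_t^2 = \Delta\eangle{Y}_t$ then yields the claim.

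Finally I would feed $f$ into \cref{th:generalized_bound}. Using the scaling property of the Legendre--Fenchel transform together with the $\psi_B$ entry of \cref{tab:psi_transforms}, the transform of $f(\cdot, v)$ is the Bernoulli Kullback--Leibler divergence
\begin{align}
f_v^\star(u) = \KL{\frac{v + cu}{v + c^2}}{\frac{v}{v + c^2}};
\end{align}
evaluating at $v = m/n$ and $u = x/n$ and simplifying reproduces the stated probability bound, with the prefactor $d$ arising from $l_0 = d$. The optimizing slope is $\lambda_\star = (f_{m/n}^\star)'(x/n) = \frac{cn}{m + c^2 n}\log\xi$, which both identifies $\xi$ and, once substituted into the two boundary expressions of \cref{th:generalized_bound}, produces the stated function $g$; here one uses the defining identity $v e^{c\lambda_\star} + c^2 e^{-\lambda_\star v/c} = (v + c^2) e^{f(\lambda_\star, v)}$ to pass between $g$ and the explicit form of $f$.

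I expect the main obstacle to be the per-step matrix m.g.f.\ bound in the second step: the extremal two-point argument is intrinsically one-dimensional, so lifting it to $\Hcal^d$ requires that the tangent quadratic $q_t$ be chosen at a \emph{predictable} variance level and that the gap between the per-step quantity $\gamma_{\max}(\Delta\eangle{Y}_t)$ and the increment $\Delta V_t = \gamma_{\max}(\eangle{Y}_t) - \gamma_{\max}(\eangle{Y}_{t-1})$ be controlled. These coincide when $d = 1$ but in general only satisfy $\Delta V_t \le \gamma_{\max}(\Delta\eangle{Y}_t)$ by subadditivity of $\gamma_{\max}$, so one must combine this with the monotonicity of $f$ in $v$ to preserve the supermartingale property. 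The remaining algebra, while lengthy, is routine once the transform $f^\star$ and the optimizer $\lambda_\star$ are in hand.
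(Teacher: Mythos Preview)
Your overall plan matches the paper's framework: take $f(\lambda,v)=v\,\psi_{B,v/c,c}(\lambda)$, check \cref{th:generalized_assumption} via the trace-exponential construction of \cref{th:sub_psi_lemma}, and instantiate \cref{th:generalized_bound} after computing $f_{m/n}^\star$ and $\lambda_\star$. The paper omits the proof as routine, and your transform calculations are on target (the probability bound in the corollary is written for $c=1$, which explains the apparent mismatch you would encounter when carrying a general $c$ through).

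The gap is precisely where you flag it, but your proposed fix via monotonicity points the \emph{wrong} way. You need
\[
\exp\Bigl\{\lambda S_t-\sum_{i\le t} f(\lambda,\Delta V_i)\Bigr\}\le L_t(\lambda)
\]
for a supermartingale $L_t$. With the trace-exponential choice $L_t=\tr\exp\{\lambda Y_t-\sum_i f(\lambda,\sigma_i^2)I_d\}$ and $\sigma_i^2=\gamma_{\max}(\Delta\eangle{Y}_i)$, the spectral step only gives $L_t\ge\exp\{\lambda S_t-\sum_i f(\lambda,\sigma_i^2)\}$. Since $\Delta V_i\le\sigma_i^2$ and $f(\lambda,\cdot)$ is increasing, one obtains $\sum_i f(\lambda,\Delta V_i)\le\sum_i f(\lambda,\sigma_i^2)$, hence
\[
\exp\Bigl\{\lambda S_t-\sum_i f(\lambda,\Delta V_i)\Bigr\}\ \ge\ \exp\Bigl\{\lambda S_t-\sum_i f(\lambda,\sigma_i^2)\Bigr\},
\]
and the desired domination by $L_t$ does not follow. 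Nor can you tighten the tangent-quadratic step to land at $f(\lambda,\Delta V_t)$: the quadratic's coefficients depend on the variance level, and the matrix variance $\E_{t-1}\Delta Y_t^2$ is controlled only through $\sigma_t^2$, not through the generally smaller $\Delta V_t$. For $d=1$ these quantities coincide and your argument is complete; for $d>1$ you would need either to run the bound with the larger intrinsic-time process $\tilde V_t=\sum_{i\le t}\gamma_{\max}(\Delta\eangle{Y}_i)$ (which alters the statement) or to supply a genuinely matrix-valued variance correction inside the trace exponential rather than a scalar multiple of $I_d$.
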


One can further generalize \cref{th:generalized_assumption} by replacing
$\sum_i f(\lambda, \Delta V_i)$ with $\sum_i f_i(\lambda, \Delta V_i)$,
permitting $f_i$ to vary with time, but the added generality further weakens the
conclusions we can draw.

\section{Equivalent sub-exponential conditions}\label{sec:equiv_sub_exp}

Here we show that our sub-exponential condition \eqref{eq:our_sub_exp} is
equivalent to another common definition \eqref{eq:other_sub_exp}
\citep{wainwright_high-dimensional_2017}. We rephrase both conditions for the
right tail of a mean-zero random variable $X$.

\begin{proposition}
  For a zero-mean random variable $X$, the following are equivalent:
  \begin{enumerate}
  \item There exist $\sigma^2 > 0$ and $c > 0$ such that
    \begin{align}
      \log \E e^{\lambda X}
        \leq \frac{\ebracket{-\log(1 - c\lambda) - c\lambda} \sigma^2}{c^2}
      \qquad \text{ for all } \lambda \in [0, 1/c).
      \label{eq:our_sub_exp}
    \end{align}
  \item There exist $\nu > 0$ and $\alpha > 0$ such that
    \begin{align}
      \log \E e^{\lambda X} \leq \frac{\lambda^2 \nu}{2}
      \qquad \text{ for all } \lambda \in [0, 1/\alpha).
      \label{eq:other_sub_exp}
    \end{align}
  \end{enumerate}
\end{proposition}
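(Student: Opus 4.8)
The plan is to prove the two implications separately by matching the scale parameters, leveraging the comparison inequalities among the standard $\psi$ functions that the paper has already established. Writing $\psi_{E,c}(\lambda) = [-\log(1-c\lambda) - c\lambda]/c^2$, condition \eqref{eq:our_sub_exp} reads $\log \E e^{\lambda X} \le \sigma^2 \psi_{E,c}(\lambda)$ on $[0, 1/c)$, while condition \eqref{eq:other_sub_exp} reads $\log \E e^{\lambda X} \le \nu\,\psi_N(\lambda)$ on $[0, 1/\alpha)$, recalling $\psi_N(\lambda) = \lambda^2/2$. In each case finiteness of the moment generating function on the stated interval is implicit, since the right-hand side is finite there.

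For \eqref{eq:other_sub_exp} $\Rightarrow$ \eqref{eq:our_sub_exp}, I would simply take $c = \alpha$ and $\sigma^2 = \nu$. Because $\psi_{E,c}$ is nondecreasing in $c$ with $\psi_{E,0} = \psi_N$ (the monotonicity noted just before \cref{th:psi_relations}, together with the definition), we have $\psi_{E,c} \ge \psi_N$ for all $c \ge 0$. Hence on the common interval $[0, 1/\alpha) = [0, 1/c)$,
\begin{align*}
\log \E e^{\lambda X}
  \le \nu\,\psi_N(\lambda)
  = \sigma^2 \psi_N(\lambda)
  \le \sigma^2 \psi_{E,c}(\lambda),
\end{align*}
which is exactly \eqref{eq:our_sub_exp}. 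This direction is immediate.

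For \eqref{eq:our_sub_exp} $\Rightarrow$ \eqref{eq:other_sub_exp} I would invoke part (7) of \cref{th:psi_relations}, namely $\psi_{E,c} \le \psi_{G,c}$ for $c \ge 0$, where $\psi_{G,c}(\lambda) = \lambda^2/[2(1-c\lambda)]$. The point is that $\psi_{E,c}(\lambda) \uparrow \infty$ as $\lambda \uparrow 1/c$, so it cannot be dominated by any multiple of $\lambda^2$ on all of $[0,1/c)$; one must contract the domain. On $[0, 1/(2c))$ we have $1 - c\lambda > 1/2$, so $\psi_{G,c}(\lambda) < \lambda^2 = 2\,\psi_N(\lambda)$ there, and therefore $\psi_{E,c}(\lambda) \le 2\,\psi_N(\lambda)$. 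Setting $\nu = 2\sigma^2$ and $\alpha = 2c$ then gives
\begin{align*}
\log \E e^{\lambda X}
  \le \sigma^2 \psi_{E,c}(\lambda)
  \le 2\sigma^2 \psi_N(\lambda)
  = \nu\,\psi_N(\lambda)
  = \frac{\nu \lambda^2}{2}
\end{align*}
for all $\lambda \in [0, 1/\alpha)$, establishing \eqref{eq:other_sub_exp}.

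The only genuine subtlety, and the step I would flag as the crux, is the asymmetry between the two domains: the sub-exponential condition controls the cumulant generating function only up to the scale-dependent boundary $1/c$, and the blow-up of $\psi_{E,c}$ near that boundary forces the factor-of-two domain contraction (hence $\alpha = 2c$) in the forward direction, whereas the reverse direction costs nothing because $\psi_{E,c}$ already dominates $\psi_N$. The resulting constants $(2\sigma^2, 2c)$ are not sharp—any fixed fraction of the domain would do—and I would not attempt to optimize them.
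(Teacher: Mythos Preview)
Your proof is correct. Both directions are sound: the reverse direction via $\psi_{E,c} \ge \psi_{E,0} = \psi_N$ is essentially the paper's own argument, which instead observes that the Taylor series for $\psi_{E,c}$ has all nonnegative terms when $\lambda \ge 0$ and hence dominates its leading term $\lambda^2/2$. The forward direction is where you genuinely diverge: the paper argues by Taylor expansion that $\sigma^2\psi_{E,c}(\lambda) = \sigma^2\lambda^2/2 + o(\lambda^2)$, so for any $\nu > \sigma^2$ one can shrink the domain enough to make $\nu\lambda^2/2$ dominate, yielding non-explicit constants. You instead route through the established comparison $\psi_{E,c} \le \psi_{G,c}$ (\cref{th:psi_relations}, part 7) and then bound $\psi_{G,c}$ by $2\psi_N$ on the half-domain $[0,1/(2c))$, which delivers the explicit constants $\nu = 2\sigma^2$, $\alpha = 2c$. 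Your approach is cleaner in that it reuses the paper's own machinery and gives concrete constants; the paper's approach is more self-contained and shows one can take $\nu$ arbitrarily close to $\sigma^2$ at the cost of a larger $\alpha$.
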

\begin{proof}
  Suppose the first condition holds. A Taylor expansion of
  $[-\log(1-c\lambda) - c\lambda] / c^2$ about $\lambda = 0$ yields
  \begin{align}
    \frac{\ebracket{-\log(1 - c\lambda) - c\lambda} \sigma^2}{c^2}
      = \frac{\lambda^2\sigma^2}{2}
        + \lambda^2 \sigma^2 \sum_{k=1}^\infty \frac{(c\lambda)^k}{2+k}
      = \frac{\lambda^2 \sigma^2}{2} + o(\lambda^2).
  \end{align}
  So choosing $\nu > \sigma^2$, we can find $\alpha$ sufficiently large to
  ensure that
  \begin{align}
    \frac{\ebracket{-\log(1 - c\lambda) - c\lambda} \sigma^2}{c^2}
      \leq \frac{\lambda^2 \nu}{2}
    \qquad \text{ for all } \lambda \in [0, 1/\alpha),
  \end{align}
  implying the second condition holds.

  Now suppose the second condition holds. Then since $\lambda \geq 0$, the above
  series expansion shows that the first condition holds with $\sigma^2 = \nu$
  and $c = \alpha$.
\end{proof}

Note that if the first condition \eqref{eq:our_sub_exp} applies to both $X$ and
$-X$, then $X$ satisfies the usual, two-tailed sub-exponential condition,
$\log \E e^{\lambda X} \leq \lambda^2 \nu / 2$ for all
$\abs{\lambda} < 1/\alpha$.

\end{document}